\DeclareMathOperator*{\argsup}{arg\,sup}
\DeclareMathOperator*{\arginf}{arg\,inf}
\theoremstyle{definition} 
\newtheorem{theorem}{Theorem}[section] 
\newtheorem{corollary}[theorem]{Corollary}
\newtheorem{lemma}[theorem]{Lemma}
\newtheorem{definition}[theorem]{Definition}
\definecolor{deepblue}{rgb}{0,0,0.5}
\definecolor{deepred}{rgb}{0.6,0,0}
\definecolor{deepgreen}{rgb}{0,0.5,0}
\definecolor{colKeys}{rgb}{0,0,1} 
\definecolor{colIdentifier}{rgb}{0,0,0} 
\definecolor{colComments}{rgb}{0,1,0.3} 
\definecolor{colString}{rgb}{0,0.5,0} 
\definecolor{dkgreen}{rgb}{0,0.6,0} 
\definecolor{gray}{rgb}{0.5,0.5,0.5} 
\small\color{gray}, 
\providecommand{\keywords}[1]{\textbf{\textit{Keywords:}} #1}
\providecommand{\subject}[1]{\textbf{\textit{Mathematics Subject Classification 2020:}} #1}
\title{Local Fourier Analysis of a Space-Time Multigrid Method for DG-SEM for the Linear Advection Equation} 
\author
{Lea M. Versbach$^{1\ast}$, Philipp Birken$^{1}$, Viktor Linders$^{1}$, Gregor Gassner$^{2}$\\
\\
\normalsize{$^{1}$Centre for Mathematical Sciences, Numerical Analysis, Lund University, Lund, Sweden}\\
\normalsize{$^{2}$Center for Data and Simulation Sciences; Department of Mathematics and Computer Science}\\
\normalsize{Weyertal 86-90, 50931 Köln, Germany}\\
\\
\normalsize{$^\ast$lea\_miko.versbach@math.lu.se}
}
\date{\today}
\begin{document}

\maketitle

\begin{abstract}
In this paper we present a Local Fourier Analysis of a space-time multigrid solver for a hyperbolic test problem. The space-time discretization is based on arbitrarily high order discontinuous Galerkin spectral element methods in time and a first order finite volume method in space. We apply a block Jacobi smoother and consider coarsening in space-time, as well as temporal coarsening only. Asymptotic convergence factors for the smoother and the two-grid method for both coarsening strategies are presented.
For high CFL numbers, the convergence factors for both strategies are $0.5$ for first order, and $0.375$ for second order accurate temporal approximations.
Numerical experiments in one and two spatial dimensions for space-time DG-SEM discretizations of varying order gives even better convergence rates of around $0.3$ and $0.25$ for sufficiently high CFL numbers.

\end{abstract}

\keywords{Local Fourier Analysis, Space-Time, Multigrid, Discontinuous Galerkin Spectral Element Method, Linear Advection Equation}\\
\subject{65M55,65M22,65M60,65T99}

\section{Introduction}

Space-time discontinuous Galerkin (DG) discretizations have received increased attention in recent years. One reason is that they allow for high order implicit discretizations and parallelization in time \cite{Gander2015}. Moreover, new space-time DG spectral element methods have been constructed \cite{Friedrich2019}. These are provably entropy stable for hyperbolic conservation laws which is of great interest in that community. Several authors have studied space-time DG methods for different equations, for instance hyperbolic problems in \cite{Dorfler2016,Friedrich2019}, advection-diffusion problems in \cite{Klaij2007,Sudirham2006}, the Euler equations of gas dynamics in \cite{VanderVegt2002,VanderVegt2002h} and nonlinear wave equations in \cite{VanderVegt2007}. 

The philosophy of space-time methods is to treat time as just an additional dimension \cite{Neumuller2013}. This has several advantages, i.e. moving boundaries can be treated more easily \cite{VanderVegt2006} and parallelization in time is possible \cite{Gander2015}. However, the technique also has challenges, since the temporal direction has a special role. Time always needs to follow a causality principle: a solution later in time is only determined by a solution earlier in time, never the other way around.

Several time parallel numerical methods exist, and can be divided into four groups \cite{Gander2015}: Methods based on multiple shooting, methods based on domain decomposition and waveform relaxation, space-time multigrid methods and direct time parallel methods. In this article we focus on space-time multigrid methods, which often scale linearly with the number of unknowns \cite{Trottenberg2001}. 

An analysis tool for multigrid methods is the Local Fourier Analysis (LFA), introduced in \cite{Brandt1977}. It can be used to study smoothers and two-grid algorithms. The technique is based on assuming periodic boundary conditions and transforming the given problem into the frequency domain using a discrete Fourier transform. Thus, the LFA can be used as a predictor for asymptotic convergence rates when considering problems with non-periodic boundary conditions \cite{Friedhoff2013}. The smoothing and asymptotic convergence are both related to the eigenvalues of the operators for the smoother and the two-grid algorithm. These operators are very large for space-time discretizations, since the effective dimension becomes $d+1$ for a $d$-dimensional problem. It is therefore not feasible to calculate the eigenvalues. When performing an LFA, the operators are of block diagonal form in the Fourier space, which reduces the problem to an analysis of so-called Fourier symbols. These are of much smaller size and make calculations feasible. Multigrid solvers have been analyzed in the DG context with block smoothers for convection-diffusion problems in \cite{Gopalakrishnan2003,Klaij2007,VanRaalte2005} and for elliptic problems in \cite{Hemker2003,Hemker2004}. Space-time MG methods have been analyzed mostly for parabolic problems \cite{Falgout2017,Friedhoff2013,Gander2016}. Analysis of space-time MG algorithms for DG discretizations of advection dominated flows has been quite limited but can be found for the advection-diffusion equation or linearized versions of the compressible Euler equations \cite{VanderVegt2012,VanderVegt2012h} and for generalized diffusion problems \cite{Friedhoff2013}.

In this article we use the LFA to analyze a space-time multigrid solver for a hyperbolic model problem. The analysis is similar to \cite{Gander2016}, where the authors considered a one-dimensional heat equation discretized with a finite element method in space and DG in time. Instead we study the one-dimensional linear advection equation discretized with a first order finite volume (FV) method in space and a discontinuous Galerkin spectral element method (DG-SEM) in time. This results in a fully discrete space-time DG discretization.

Due to the choice of the test problem as well as the spatial discretization we get complex Fourier symbols, making the analysis more difficult. However, for large CFL numbers we are able to determine asymptotic smoothing factors analytically. We use a block Jacobi smoother and compare two different coarsening strategies: coarsening in both temporal and spatial directions as well as coarsening in the temporal direction only. The two-grid convergence factors need to be calculated numerically based on the complex Fourier symbols. We compare the results of the analysis to numerical convergence rates for multi-dimensional advection problems with more general boundary conditions. These experiments are produced using the the Distributed and Unified Numerics Environment (DUNE), an open source modular toolbox for solving partial differential equations with grid-based methods as DG, finite element and finite differences \cite{Bastian2021,Dedner2021,Dedner2020,Dedner2010}.

In section \ref{SectionProblem} we describe the model problem and the DG space-time discretization. Moreover, we introduce the multigrid solver and its components. 
In section \ref{SectionBasicsLFA} we discuss some preliminaries: the equivalence of temporal DG-SEM discretizations with upwind flux and Lobatto IIIC Runge-Kutta methods, stability results, as well as the basic tools needed for the LFA. In section \ref{FourierSymbols} we derive the Fourier symbols for all elements of the multigrid iteration: discretization operator, smoother, restriction and prolongation. In section \ref{SectionSmoothingAnalysis} we analyze the smoothing properties for a block Jacobi smoother to find an optimal damping parameter. The two-grid asymptotic convergence factors are calculated in section \ref{SectionTwoGridAnalysis}. Numerical results for the advection equation in one and two dimensions are presented in section \ref{SectionNumericalResults} and serve as a comparison to the theoretical results obtained from the LFA. Conclusions are drawn in section \ref{SectionConclusions}.

\section{Problem Description}\label{SectionProblem}

The goal of this paper is to analyze a space-time multigrid solver for a discretized hyperbolic model problem. We consider the one-dimensional linear advection equation
\begin{align}\label{advectionProblem}
 u_t + a u_x = 0,~ (x,t)\in [L,R] \times [0,T] =: \boldsymbol{\Omega} \subset \mathbb{R}^2
\end{align}
with $a>0$. For the analysis, we need periodic boundary conditions in space and time.

\subsection{Discretization}

For the analysis we discretize \eqref{advectionProblem} with a space-time DG method with a FV method in space (which corresponds to one a DG method of order $p_x = 0$ with one Legendre-Gauss node, i.e., the midpoint) and variable temporal polynomial degree $p_t$. For the temporal direction, we use a DG-SEM. This is a specific DG discretization based on the following choices: The solution and the physical flux function are approximated element-wise by nodal polynomials using a Lagrangian basis based on Legendre-Gauss-Lobatto (LGL) nodes. Moreover, integrals are approximated by Gaussian quadrature, which is collocated with the nodes of the polynomial approximation \cite{Hesthaven2007}.

To discretize equation \eqref{advectionProblem}, we start by dividing $\boldsymbol{\Omega}$ into space-time elements $[x_n,x_{n+1}] \times [t_m,t_{m+1}],~n=1,\dots,N_x,~m=1,\dots,N$. A weak form on each space-time element $[x_n,x_{n+1}] \times [t_m,t_{m+1}]$ is
\begin{align*}
\int_{x_n}^{x_{n+1}} \int_{t_m}^{t_{m+1}} (u_t + au_x) \psi \text{d}x \text{d}t = 0,
\end{align*}
for test functions $\psi$ in a test space $C^1(\boldsymbol{\Omega})$.

It is of advantage for the DG-SEM discretization to map the temporal elements to the reference element $[-1,1]$ using the linear map $\tau(t) = 2 \frac{t-t_m}{\Delta t}$ with $\Delta t = t_{m+1}-t_m$. Then we get the modified weak form 
\begin{align*}
\frac{2}{\Delta t}\int_{x_n}^{x_{n+1}} \int_{-1}^{+1} u_{\tau}  \psi \text{d}x \text{d} \tau + \int_{x_n}^{x_{n+1}} \int_{-1}^{1} au_x \psi \text{d}x \text{d} \tau= 0.
\end{align*}

Integration by parts in both direction yields
\begin{align*}
\frac{2}{\Delta t}\int_{x_n}^{x_{n+1}} (u \psi|_{-1}^1  - \int_{-1}^{+1} u \psi_{\tau} \text{d} \tau) \text{d}x + a \int_{-1}^{1} (u \psi|_{x_n}^{x_{n+1}}  - \int_{x_n}^{x_{n+1}}u \psi_{\xi} \text{d}x) \text{d}\tau = 0.
\end{align*}

We now approximate $u$ on each space-time element $[x_n,x_{n+1}] \times [-1,1]$ by polynomials of variable degree $p_t$ in time and constant degree $p_x=0$ in space:
\begin{align*}
u(x,\tau) = \sum_{i=1}^{N_t} u_{ij} \ell_i(\tau),
\end{align*} 
with $N_t = p_t+1$ and $j=1,\dots,N_x$ volumes in space. For the DG-SEM discretization, the basis functions $\ell_i$ are Lagrange polynomials of degree $p_t$ based on the Legendre-Gauss-Lobatto (LGL) nodes $\{\tau_j \}_{i=1}^{N_t}$ in $[-1,1]$. We choose $\psi$ from the same space, thus
\begin{align*}
\psi(x,\tau) = \sum_{i=1}^{N_t} \psi_{ij} \ell_i(\tau),~ j=1,\dots,N_x.
\end{align*} 

Inserting the polynomial approximations gives
\begin{align*}
\frac{2}{\Delta t}\int_{x_n}^{x_{n+1}} (u \psi|_{-1}^1  - \int_{-1}^{+1} u \psi_{\tau} \text{d} \tau) \text{d}x + a \int_{-1}^{1} (u(x_{n+1},\tau)\psi(x_{n+1},\tau) - u(x_{n},\tau))\psi(x_{n},\tau) \text{d}\tau = 0.
\end{align*}

Next, we approximate the integrals in time with Gaussian quadrature using the same LGL nodes $\{\tau_i \}_{i=1}^{N_{\tau}}$ and weights $\{\omega_i \}_{i=1}^{N_t}$ as for the basis functions
\begin{align*}
\int_{-1}^{1} f(\tau) \text{d}\tau \approx \sum_{i=1}^{N_t} \omega_i f(\tau_i),
\end{align*}
and consider mean values in the spatial direction.
This yields
\begin{align*}
\frac{2}{\Delta t} (u^*_{N_tj}\delta_{iN_x} - u^*_{1j}\delta_{i1} - \sum_{l=1}^{N_t} \omega_l \ell_i'(\tau_l) u_{lj}) + \frac{a \omega_i}{\Delta x} (u^*_{i,j} - u^*_{i,j-1}) = 0,~ i=1,\dots,N_t, j=2,\dots,N_x,
\end{align*}
where we have replaced the boundary terms by numerical flux functions $u^*$. Here, we choose the upwind flux in the spatial and the temporal direction.

We can combine the temporal and spatial discretizations with a tensor product ansatz. Let us denote the spatial discretization operators by the index $\xi$ and the temporal discretization operators by $\tau$. With the temporal DG-SEM operators
\begin{align}\label{TemporalOperator}
& \mathbf{M}_{\tau} = \frac{\Delta t}{2} 
\begin{pmatrix}
\omega_1 \\
& \ddots \\
& & \omega_{N_t}
\end{pmatrix} \in \mathbb{R}^{N_t \times N_t},~
\mathbf{C}_{\tau} =
\begin{pmatrix}
0 & & 1 \\
& \ddots \\
& & 0
\end{pmatrix} \in \mathbb{R}^{N_t \times N_t},~
\mathbf{E}_{N_t} =
\begin{pmatrix}
0 \\
& \ddots \\
& & 0 \\
& & & 1
\end{pmatrix} \in \mathbb{R}^{N_t \times N_t}, \nonumber\\
& \mathbf{K}_{\tau} = \mathbf{E}_{N_t} - \mathbf{D}_{\tau}^T \mathbf{M}_{\tau} \in \mathbb{R}^{N_t \times N_t}, ~ (\mathbf{D}_{\tau})_{ij} = \frac{2}{\Delta t} \ell'_j(\tau_i),~ i,j = 1\dots,N_t, 
\end{align}
and the spatial operator
\begin{align}\label{SpatialOperator}
 \mathbf{K}_{\xi} = \frac{a}{\Delta x}
 \begin{pmatrix}
  1 & & & -1 \\
  -1 & 1 & & \\
  & \ddots & \ddots & \\
  & & -1 & 1
 \end{pmatrix}\in \mathbb{R}^{N_x \times N_x},~ \mathbf{I}_{\xi} \in \mathbb{R}^{N_x \times N_x},
\end{align}
the following linear space-time system has to be solved on each so-called space-time slab $n$:
\begin{align}\label{STadvection}
 (\mathbf{I}_{\xi}  \otimes \mathbf{K}_{\tau} + \mathbf{K}_{\xi}  \otimes \mathbf{M}_{\tau})\mathbf{u}^{n+1} = (\mathbf{I}_{\xi}  \otimes \mathbf{C}_{\tau}) \mathbf{u}^n,
\end{align}
Here, the spatial matrices correspond to the whole domain in space while the temporal matrices correspond to one DG element in time.

An example for an equidistant space-time grid can be seen in Figure~\ref{SpaceTimeGrid}. Then all $N_x$ spatial elements and one temporal element $n$ represent one space-time slab $n=1,\dots,N$, as highlighted gray in the figure. 

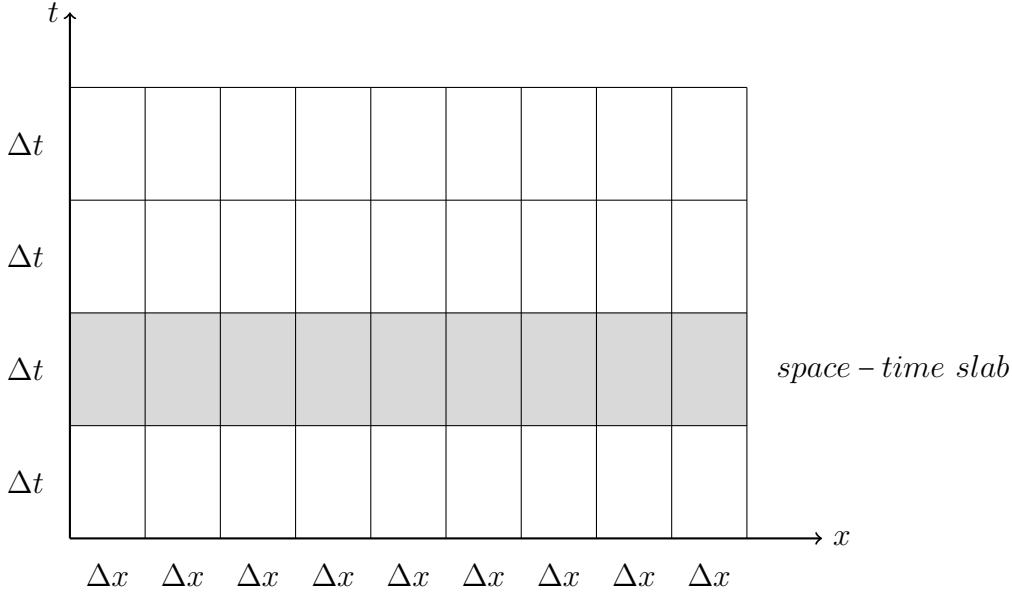
\begin{figure}
\centering
\begin{tikzpicture}
\draw[fill=gray!30,gray!30] (2,1.5) -- (2,3) -- (11,3) -- (11,1.5);
\node [right] at (11.25,2.25) {$space-time~slab$};

\draw [->,thick] (2,0) -- (12,0);
\node [right] at (12,0) {$x$};
\draw [->,thick] (2,0) -- (2,7);
\node [left] at (2,7) {$t$};

\draw [] (3,0) -- (3,6);
\draw [] (4,0) -- (4,6);
\draw [] (5,0) -- (5,6);
\draw [] (6,0) -- (6,6);
\draw [] (7,0) -- (7,6);
\draw [] (8,0) -- (8,6);
\draw [] (9,0) -- (9,6);
\draw [] (10,0) -- (10,6);
\draw [] (11,0) -- (11,6);

\draw [] (2,1.5) -- (11,1.5);
\draw [] (2,3) -- (11,3);
\draw [] (2,4.5) -- (11,4.5);
\draw [] (2,6) -- (11,6);

\node [left] at (1.8,0.75) {$\Delta t$};
\node [left] at (1.8,2.25) {$\Delta t$};
\node [left] at (1.8,3.75) {$\Delta t$};
\node [left] at (1.8,5.25) {$\Delta t$};

\node [below] at (2.5,-0.2) {$\Delta x$};
\node [below] at (3.5,-0.2) {$\Delta x$};
\node [below] at (4.5,-0.2) {$\Delta x$};
\node [below] at (5.5,-0.2) {$\Delta x$};
\node [below] at (6.5,-0.2) {$\Delta x$};
\node [below] at (7.5,-0.2) {$\Delta x$};
\node [below] at (8.5,-0.2) {$\Delta x$};
\node [below] at (9.5,-0.2) {$\Delta x$};
\node [below] at (10.5,-0.2) {$\Delta x$};

\end{tikzpicture}
\caption{Equidistant space-time grid in one spatial dimension.}\label{SpaceTimeGrid}
\end{figure}

Let us write the vector of unknowns for the space-time problem as $\underline{\mathbf{u}} = [\mathbf{u}^1,\dots,\mathbf{u}^N]^T \in \mathbb{R}^{N N_x N_t}$. The components of $\mathbf{u}^n \in \mathbb{R}^{N_x N_t}$ are given by $\mathbf{u}^n_{j,k} \in \mathbb{R}$, where $n$ denotes the space-time slab, $j$ is the index for the unknowns in space and $k$ the index for the unknowns in one time element, i.e. $\mathbf{u}^n_{j}\in \mathbb{R}^{N_t}$. This index notation is used throughout this paper for vectors in the space $\mathbb{R}^{N N_x N_t}$. 

The full space-time system for $N$ space-time slabs on $[0,T]$ can then be written in block form
\begin{align}\label{MG}
\underline{\mathbf{L}}_{\tau,\xi} \underline{\mathbf{u}} :=
 \begin{pmatrix}
  \mathbf{A}_{\tau,\xi} & & & \mathbf{B}_{\tau,\xi}\\
  \mathbf{B}_{\tau,\xi} & \mathbf{A}_{\tau,\xi} \\
  & \ddots & \ddots \\
  & & \mathbf{B}_{\tau,\xi} & \mathbf{A}_{\tau,\xi}
 \end{pmatrix}
 \begin{pmatrix}
  \mathbf{u}^1 \\
  \mathbf{u}^2 \\
  \vdots \\
  \mathbf{u}^N
 \end{pmatrix}
 =
 \begin{pmatrix}
  \mathbf{0} \\
  \mathbf{0} \\
  \vdots \\
  \mathbf{0}
 \end{pmatrix}
 =: \underline{\mathbf{b}}
\end{align}
with 
\begin{align}
\mathbf{A}_{\tau,\xi} & := \mathbf{I}_{\xi} \otimes \mathbf{K}_{\tau} + \mathbf{K}_{\xi} \otimes \mathbf{M}_{\tau} \in \mathbb{R}^{N_x N_t \times N_x N_t},\\
\mathbf{B}_{\tau,\xi} & := - \mathbf{I}_{\xi} \otimes \mathbf{C}_{\tau} \in \mathbb{R}^{N_x N_t \times N_x N_t}.
\end{align}
and the space-time operator $\underline{\mathbf{L}}_{\tau,\xi}  \in \mathbb{R}^{N N_t N_x \times N N_t  N_x}$ and space-time vectors $\underline{\mathbf{u}}, \underline{\mathbf{b}} \in \mathbb{R}^{N N_x N_t}$.

\subsection{Multigrid Solver}\label{Section:MGsolver}

In the next step, the linear system \eqref{MG} has to be solved. One method is to apply a forward substitution w.r.t. the time blocks. This involves inverting the diagonal blocks in each time step and results in a sequential process. Instead, we use a space-time multigrid method to solve \eqref{MG}. Multigrid algorithms consist of three main components: a smoother, intergrid transfer operators, i.e. prolongation and restriction, and a coarse grid solver. It is of advantage to study the intergrid transfer operators in space and time separately due to the special causality principle in the temporal direction. We consider a geometric multigrid method in space and time. We choose block Jacobi smoothers with blocks corresponding to one space-time slab since it has been shown in the case of space-time multigrid methods that pointwise or line-smoothers, when not chosen carefully, can result in divergent methods \cite{Hackbusch1984,Neumuller2013,Trottenberg2001}. The choice of these blocks follows from the block form of the full space-time system \eqref{MG}.

Let $\boldsymbol{\Omega}_{\ell} \subset \mathbb{R}^2$ be the grid on level $\ell=0,\dots,M$ with $\ell=0$ the coarsest and $\ell=M$ the finest level. We denote the number of time slabs on multigrid level $\ell$ by $N_{\ell_t}\in \mathbb{N}$ and the number of spatial elements by $N_{\ell_x}\in \mathbb{N}$. In consequence, on each space-time grid level $\ell$ the system matrix $\underline{\mathbf{L}}_{\tau_{\ell},\xi_{\ell}}$ is defined by \eqref{SpatialOperator} with $N_{\ell_x}$ volumes and the space-time system \eqref{MG} with $N_{\ell_t}$ time steps.

For the temporal component, restriction and prolongation matrices are defined using $\mathcal{L}_2$ projections,
\begin{align}\label{RandPtime}
\begin{split}
\mathbf{R}^{\ell_t}_{\ell_t-1} &:=
\begin{pmatrix}
\mathbf{R}_1 & \mathbf{R}_2 \\
& & \mathbf{R}_1 & \mathbf{R}_2\\
& & & \ddots & \ddots \\
& & & & \mathbf{R}_1 & \mathbf{R}_2
\end{pmatrix} \in  \mathbb{R}^{N_t N_{\ell_t-1} \times N_t N_{\ell_t}},\\
\mathbf{P}_{\ell_t}^{\ell_t-1} &:= (\mathbf{R}^{\ell_t}_{\ell_t-1})^T \in  \mathbb{R}^{N_t N_{\ell_t} \times N_t N_{\ell_t-1}},
\end{split}
\end{align}
with local prolongation matrices $\mathbf{R}_1^T := \mathbf{M}_{\tau_{\ell}}^{-1}\widetilde{\mathbf{M}}_{\tau_{\ell}}^1$ and $\mathbf{R}_2^T := \mathbf{M}_{\tau_{\ell}}^{-1}\widetilde{\mathbf{M}}_{\tau_{\ell}}^2$, see \cite{Gander2016}, \cite{Klaij2007}. For basis functions $\{\ell_k\}_{k=1}^{N_t} \subset \mathbb{P}^{p_t}(0,\tau_{\ell})$ on the fine grid and $\{\tilde{\ell}_k\}_{k=1}^{N_t} \subset \mathbb{P}^{p_t}(0,2\tau_{\ell})$ on the coarse grid, the local projection matrices from the coarse to the fine grid are defined by
\begin{align}
 \widetilde{M}_{\tau_{\ell}}^1(k,l) := \int_{0}^{\tau_{\ell}} \tilde{\ell}_l(t)\ell_k(t) dt,~~~ \widetilde{M}_{\tau_{\ell}}^2(k,l) := \int_{\tau_{\ell}}^{2\tau_{\ell}} \tilde{\ell}_l(t)\ell_k(t-\tau) dt,~ k,l=1,\dots,N_t.
\end{align}
Restriction and prolongation matrices in space are given by agglomeration
\begin{align}\label{RandPspace}
\begin{split}
 \mathbf{R}^{\ell_x}_{\ell_x-1} &:= \frac{1}{2}
 \begin{pmatrix}
  1 & 1 \\
  & & 1 & 1 \\
  & & & \ddots & \ddots \\
  & & & & 1 & 1
 \end{pmatrix} \in \mathbb{R}^{N_{\ell_x-1}\times N_{\ell_x}},\\
 \mathbf{P}_{\ell_x}^{\ell_x-1} &:= (2\mathbf{R}^{\ell_x}_{\ell_x-1})^T  \in \mathbb{R}^{N_{\ell_x}\times N_{\ell_x-1}}.
\end{split}
\end{align}

We study two different coarsening strategies: coarsening in space and time, referred to as full-coarsening and denoted by index $f$, and coarsening in the temporal direction only, which we refer to as semi-coarsening with index $s$. Since we are especially interested in the efficiency of the multigrid method in time, we study a semi-coarsening strategy in this direction.

For the space-time system, restriction and prolongation operators can then be defined with a tensor product
\begin{align}
 (\underline{\mathbf{R}}^{\ell}_{\ell-1})^s &:= \mathbf{I}_{N_{\ell_x}} \otimes \mathbf{R}^{\ell_t}_{\ell_t-1},~ (\underline{\mathbf{R}}^{\ell}_{\ell-1})^f := \mathbf{R}^{\ell_x}_{\ell_x-1} \otimes \mathbf{R}^{\ell_t}_{\ell_t-1}, \label{RestrictionM} \\
 (\underline{\mathbf{P}}_{\ell}^{\ell-1})^s &:= \mathbf{I}_{N_{\ell_x}} \otimes \mathbf{P}_{\ell_t}^{\ell_t-1},~ (\underline{\mathbf{P}}_{\ell}^{\ell-1})^f := \mathbf{P}_{\ell_x}^{\ell_x-1} \otimes \mathbf{P}_{\ell_t}^{\ell_t-1}. \label{ProlongationM}
\end{align}

As smoother we choose a damped block Jacobi method
\begin{align}\label{DampedBlockJacobi}
 \underline{\mathbf{u}}^{(k+1)}= \omega_t(\underline{\mathbf{D}}_{\tau_{\ell},\xi_{\ell}})^{-1} \underline{\mathbf{b}} + (\underline{\mathbf{I}}_{\tau_{\ell},\xi_{\ell}}-\omega_t(\underline{\mathbf{D}}_{\tau_{\ell},\xi_{\ell}})^{-1}\underline{\mathbf{L}}_{\tau_{\ell},\xi_{\ell}})\underline{\mathbf{u}}^{(k)},
\end{align}
with damping factor $\omega_t$ and block diagonal matrix
\begin{align}
\underline{\mathbf{D}}_{\tau_{\ell},\xi_{\ell}}:=\text{diag}( [\mathbf{A}_{\tau_{\ell},\xi_{\ell}},\dots,\mathbf{A}_{\tau_{\ell},\xi_{\ell}} ]).
\end{align}
Here, the blocks correspond to a space-time slab on the given grid level due to the block form of the full space-time system \eqref{MG}. The block Jacobi iteration matrix reads
\begin{align}\label{ItMatrixDampedJacobi}
 \underline{\mathbf{S}}_{\tau_{\ell},\xi_{\ell}} := \underline{\mathbf{I}}-\omega_t(\underline{\mathbf{D}}_{\tau_{\ell},\xi_{\ell}})^{-1}\underline{\mathbf{L}}_{\tau_{\ell},\xi_{\ell}}.
\end{align}

With this, the iteration matrices for a two-grid V-cycle with $\nu_1$ pre- and $\nu_2$ post-smoothing steps on the fine grid are given by
\begin{align}
 \underline{\mathbf{M}}_{\tau_{\ell},\xi_{\ell}}^s &:= \underline{\mathbf{S}}_{\tau_{\ell},\xi_{\ell}}^{\nu_2} \left[ \underline{\mathbf{I}} - (\underline{\mathbf{P}}_{\ell}^{\ell-1})^s (\underline{\mathbf{L}}_{2\tau_{\ell},\xi_{\ell}})^{-1} (\underline{\mathbf{R}}^{\ell}_{\ell-1})^s \underline{\mathbf{L}}_{\tau_{\ell},\xi_{\ell}} \right] \underline{\mathbf{S}}_{\tau_{\ell},\xi_{\ell}}^{\nu_1}, \label{Full2GridMatrix}\\
 \underline{\mathbf{M}}_{\tau_{\ell},\xi_{\ell}}^f &:= \underline{\mathbf{S}}_{\tau_{\ell},\xi_{\ell}}^{\nu_2} \left[ \underline{\mathbf{I}} - (\underline{\mathbf{P}}_{\ell}^{\ell-1})^f (\underline{\mathbf{L}}_{2\tau_{\ell},2\xi_{\ell}})^{-1} (\underline{\mathbf{R}}^{\ell}_{\ell-1})^f \underline{\mathbf{L}}_{\tau_{\ell},\xi_{\ell}} \right] \underline{\mathbf{S}}_{\tau_{\ell},\xi_{\ell}}^{\nu_1}, \label{Semi2GridMatrix}
\end{align}
for semi-coarsening and full-coarsening respectively. Here, it is assumed that the systems are solved exactly on the coarse grid.

\section{Preliminaries}\label{SectionBasicsLFA}

In this section we discuss some preliminaries which are needed for the local Fourier analysis presented in the following sections.

\subsection{DG-SEM and Lobatto IIIC Methods}

We start with the temporal DG-SEM discretization \eqref{TemporalOperator}. Using an upwind flux, which is a flux to fulfill the temporal causality principle, the DG-SEM discretization in time for the linear test equation
\begin{align}
 u_t + \lambda u = 0,~ t \in [0,T],~ u(0)=u_0,~ \lambda \ge 0,
\end{align}
reads 
\begin{align}\label{DGSEMtime}
 (\mathbf{K}_{\tau}+\lambda \mathbf{M}_{\tau}) \mathbf{u}^{n+1} = \mathbf{C}_{\tau} \mathbf{u}^{n}.
\end{align}

One can show that the scheme is equivalent to a specific Runge-Kutta time-stepping method:
 
\begin{theorem}[\cite{Boom2015,Hairer2006}]\label{LobattoIIIC}
 The DG-SEM \eqref{DGSEMtime} with $p_t +1$ Legendre-Gauss-Lobatto nodes is equivalent to the $(p_t+1)$-stage Runge-Kutta scheme Lobatto IIIC.
\end{theorem}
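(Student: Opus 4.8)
The plan is to write the fully discrete scheme \eqref{DGSEMtime} as a one-step method for the value passed between temporal slabs, read off its Butcher tableau, and verify that this tableau is the one of the $(p_t+1)$-stage Lobatto IIIC method as characterised in \cite{Hairer2006}.

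First I would exploit the summation-by-parts property of the LGL collocation operators in \eqref{TemporalOperator}: the diagonal mass matrix and the differentiation matrix satisfy $\mathbf{M}_\tau\mathbf{D}_\tau + \mathbf{D}_\tau^T\mathbf{M}_\tau = \mathbf{E}_{N_t} - \mathbf{E}_1$, where $\mathbf{E}_1 := \mathrm{diag}(1,0,\dots,0)$; this is integration by parts with the boundary term evaluated exactly, since LGL quadrature integrates the relevant products of basis functions and derivatives exactly. Substituting into $\mathbf{K}_\tau = \mathbf{E}_{N_t} - \mathbf{D}_\tau^T\mathbf{M}_\tau$ gives $\mathbf{K}_\tau = \mathbf{E}_1 + \mathbf{M}_\tau\mathbf{D}_\tau$, and, since $\mathbf{C}_\tau\mathbf{u}^n = u^n_{N_t}\,\mathbf{e}_1$ (the upwind temporal flux feeds only the first nodal equation, with the right-endpoint value of the previous element), the scheme \eqref{DGSEMtime} reads $(\mathbf{E}_1 + \mathbf{M}_\tau\mathbf{D}_\tau + \lambda\mathbf{M}_\tau)\mathbf{u}^{n+1} = u^n_{N_t}\,\mathbf{e}_1$. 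Reading off row by row (the $\omega_i$ are positive), the rows $i=2,\dots,N_t$ collapse to the pure collocation conditions $(\mathbf{D}_\tau\mathbf{u}^{n+1})_i = -\lambda u^{n+1}_i$, while the first row gives the penalised condition $(\mathbf{D}_\tau\mathbf{u}^{n+1})_1 + \lambda u^{n+1}_1 = \tfrac{2}{\Delta t\,\omega_1}(u^n_{N_t} - u^{n+1}_1)$.

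Next I would carry out the Runge--Kutta extraction. Rescaling time to $[0,1]$, the LGL nodes become the abscissae $c_i$ with $c_1=0$, $c_{N_t}=1$; write $\mathbf{D}$ for the reference differentiation matrix (so $\mathbf{D}\mathbf{1}=\mathbf{0}$, since the derivative of a constant vanishes) and $b_1 := \omega_1/2$. Setting $\mathbf{Y} := (u^{n+1}_i)_i$, $K_i := -\lambda Y_i = f(Y_i)$, $y_n := u^n_{N_t}$, $h := \Delta t$, the two cases combine into $(\mathbf{D} + b_1^{-1}\mathbf{E}_1)\mathbf{Y} = h\mathbf{K} + b_1^{-1}y_n\,\mathbf{e}_1$. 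The matrix $\mathbf{D}+b_1^{-1}\mathbf{E}_1$ is invertible (a null vector is a polynomial with vanishing derivative at $c_2,\dots,c_{N_t}$, hence constant, and the first equation forces it to vanish), so $\mathbf{Y} = \mathbf{A}(h\mathbf{K}) + b_1^{-1}y_n\mathbf{A}\mathbf{e}_1$ with $\mathbf{A} := (\mathbf{D}+b_1^{-1}\mathbf{E}_1)^{-1}$. Using $\mathbf{D}\mathbf{1}=\mathbf{0}$ one checks $(\mathbf{D}+b_1^{-1}\mathbf{E}_1)(b_1\mathbf{1}) = \mathbf{e}_1$, i.e. the first column of $\mathbf{A}$ is $b_1\mathbf{1}$, so $\mathbf{Y} = \mathbf{1}\,y_n + h\,\mathbf{A}\mathbf{K}$: the RK stage equation with abscissae $c_i$, matrix $\mathbf{A}$, and $a_{i1}=b_1$ for all $i$. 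The value handed to the next slab is $y_{n+1} = u^{n+1}_{N_t} = Y_{N_t} = y_n + h\sum_j a_{N_t,j}K_j$, so the method is stiffly accurate; and since $\sum_i b_i(\mathbf{D})_{ij} = \int_0^1 \hat\ell_j' = \hat\ell_j(1)-\hat\ell_j(0) = \delta_{j,N_t}-\delta_{j,1}$ (exactness of LGL quadrature for $\hat\ell_j'$, with $\hat\ell_j$ the Lagrange basis on $\{c_i\}$), multiplying by $\mathbf{A}$ shows the last row of $\mathbf{A}$ equals $(b_j)_j$, the Lobatto weights $b_j=\int_0^1\hat\ell_j$.

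To conclude, a $(p_t+1)$-stage method whose abscissae are the LGL points, whose weights are the Lobatto weights, and whose coefficient matrix has constant first column $a_{i1}=b_1$ together with the stiff-accuracy relation $a_{N_t,j}=b_j$ is, by the classification of Lobatto methods in \cite{Hairer2006}, exactly Lobatto IIIC, which gives the claimed equivalence. I expect the main obstacles to be (i) pinning down the summation-by-parts boundary term and the exact sign and $\tfrac{2}{\Delta t\,\omega_1}$ scaling of the first-row penalty, on which everything downstream depends, and (ii) justifying that the structural properties above already single out Lobatto IIIC — instead of reconstructing every $a_{ij}$ or checking the simplifying conditions $C(s-1)$ and $B(2s-2)$ directly — where I would lean on the cited references \cite{Hairer2006,Boom2015}.
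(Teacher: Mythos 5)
The paper does not actually prove Theorem \ref{LobattoIIIC}; it imports the result from \cite{Boom2015,Hairer2006}, so there is no in-paper argument to compare against. Your derivation is essentially the summation-by-parts/SAT argument of Boom and Zingg, and the bulk of it checks out: the identity $\mathbf{M}_\tau\mathbf{D}_\tau+\mathbf{D}_\tau^T\mathbf{M}_\tau=\mathbf{E}_{N_t}-\mathbf{E}_1$ holds because the integrands $\ell_i\ell_j'$ have degree at most $2p_t-1$, within the exactness range of LGL quadrature; the rewriting $\mathbf{K}_\tau=\mathbf{E}_1+\mathbf{M}_\tau\mathbf{D}_\tau$ and the split into pure collocation rows plus a penalised first row are correct; and the extraction $\mathbf{Y}=y_n\mathbf{1}+h\,\mathbf{A}\mathbf{K}$ with $\mathbf{A}=(\mathbf{D}+b_1^{-1}\mathbf{E}_1)^{-1}$, first column $a_{i1}=b_1$ and last row equal to $\mathbf{b}^T$ are all verified by sound arguments.

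The genuine gap is the final identification. A tableau on the Lobatto nodes with the Lobatto weights, constant first column $a_{i1}=b_1$ and stiff accuracy $a_{sj}=b_j$ is \emph{not} uniquely determined: already for $s=2$ these conditions fix $a_{11}=a_{21}=a_{22}=\tfrac12$ but leave $a_{12}$ free, and for $s\ge 3$ the interior rows remain underdetermined even if one adds the row-sum condition $\sum_j a_{ij}=c_i$. The characterisation of Lobatto IIIC in \cite{Hairer2006} is $a_{i1}=b_1$ for all $i$ \emph{together with} the simplifying assumption $C(s-1)$, and the latter must be verified rather than inferred from the properties you list. Fortunately it drops out of your own formula in one line: for $1\le k\le s-1$ the monomial $t^k/k$ lies in the polynomial space, so $\mathbf{D}$ maps its nodal values $\bigl(c_i^k/k\bigr)_i$ to those of $t^{k-1}$, while $\mathbf{E}_1$ annihilates them because $c_1=0$; hence $(\mathbf{D}+b_1^{-1}\mathbf{E}_1)\bigl(c_i^k/k\bigr)_i=\bigl(c_j^{k-1}\bigr)_j$, i.e. $\sum_j a_{ij}c_j^{k-1}=c_i^k/k$, which is exactly $C(s-1)$. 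With that line added (keeping your last-row computation, which is still needed to confirm that handing $Y_{N_t}$ to the next slab reproduces the Runge--Kutta update $y_{n+1}=y_n+h\sum_j b_jK_j$), the proof is complete; as written, the identification rests on a uniqueness claim that is false.
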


Here, equivalence is referred to the solution of the unknowns in the end of each element assuming the resulting systems are solved exactly.
 
 With the help of Theorem \ref{LobattoIIIC} some stability results for the temporal discretization can be drawn.
 
 \begin{theorem}[\cite{Hairer2010,Jay2015}]\label{StabilityFunction}
  For $s \in \mathbb{N}$ the $s$-stage Lobatto IIIC scheme is of order $2s-1$ and its stability function $R(z)$ is given by the $(s-2,s)$-Pad\'e approximation of the exponential function $e^z$. The method is $L$-stable and furthermore algebraically stable, thus B-stable and A-stable.
 \end{theorem}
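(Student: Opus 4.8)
The statement collects standard facts from the stability theory of implicit Runge--Kutta methods, so the plan is not a new argument but an assembly of known ingredients, each of which reduces to a finite verification on the Butcher tableau $(A,b,c)$ of the $s$-stage Lobatto IIIC scheme; I would take $s\ge 2$ throughout. Here $c=(c_1,\dots,c_s)^\top$ are the Lobatto quadrature nodes on $[0,1]$, so $c_1=0$ and $c_s=1$; $b=(b_1,\dots,b_s)^\top$ are the corresponding strictly positive Lobatto weights; and among the collocation-type methods with these nodes and weights, Lobatto IIIC is singled out by the identity $a_{i1}=b_1$ for every $i$ (constant first column), from which one also derives $a_{sj}=b_j$ for every $j$ (stiff accuracy, consistent with $c_s=1$). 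In particular $A$ is nonsingular.

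\emph{Order and stability function.} I would first record the simplifying assumptions satisfied by the method: the quadrature condition $B(2s-2)$, automatic since $b$ and $c$ come from the $s$-point Lobatto rule (exact for polynomials of degree $\le 2s-3$); the stage-order condition $C(s-1)$, built into the definition of the $a_{ij}$; and the adjoint condition $D(s-1)$, a consequence of $B$ and $C$ via the standard implication between simplifying assumptions. Butcher's order theorem then pins down the order, with $B(2s-2)$ the binding constraint. For the stability function I would use $R(z)=1+z\,b^\top(I-zA)^{-1}\mathbf{1}=\det(I-z(A-\mathbf{1}b^\top))/\det(I-zA)$. Since $A$ is nonsingular the denominator has degree exactly $s$; and since $A-\mathbf{1}b^\top$ has a vanishing first column (by $a_{i1}=b_1$) and a vanishing last row (by $a_{sj}=b_j$), the polynomial $\det(I-z(A-\mathbf{1}b^\top))$ has degree at most $s-2$. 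Combined with the order $(s-2)+s$, this forces $R$ to be the unique type-$(s-2,s)$ rational function attaining that contact with $e^z$, namely the $(s-2,s)$-Pad\'e approximant; in particular $R(\infty)=0$.

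\emph{$L$-stability and algebraic stability.} For $A$-stability I would invoke the theorem of Wanner, Hairer and N{\o}rsett settling the Ehle conjecture: the $(k,m)$-Pad\'e approximant of $e^z$ is $A$-acceptable precisely when $m-2\le k\le m$, which covers $(s-2,s)$; together with $R(\infty)=0$ from the previous step this yields $L$-stability. Independently I would verify algebraic stability directly on the tableau: the weights satisfy $b_i>0$, and the matrix $M:=\operatorname{diag}(b)\,A+A^\top\operatorname{diag}(b)-bb^\top$ is positive semidefinite --- a finite computation from the collocation relations together with $a_{sj}=b_j$, with $M$ of low rank for Lobatto IIIC. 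Since the method is irreducible, the Burrage--Butcher/Crouzeix equivalence upgrades algebraic stability to $B$-stability, and restricting the nonlinear stability inequality to the scalar test problem $u'=\lambda u$ recovers $A$-stability, consistent with the Pad\'e argument above.

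\emph{Where the work is.} None of the steps is deep; the effort is bookkeeping. The least mechanical parts are verifying $D(s-1)$ and feeding the three simplifying assumptions correctly into Butcher's order theorem, and checking $M\succeq 0$ from the collocation relations. The identification of $R$ with the $(s-2,s)$-Pad\'e approximant, by contrast, is short once the structural identities $a_{i1}=b_1$ and $a_{sj}=b_j$ are in hand, since it then rests only on the degree count for $\det(I-z(A-\mathbf{1}b^\top))$ and the uniqueness of Pad\'e approximants.
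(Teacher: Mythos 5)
The paper does not prove this theorem at all: it is quoted verbatim from the cited references (Hairer--Wanner and Jay) and used as a black box, so there is no in-paper argument to compare against. Your outline is essentially the standard textbook proof, and its main thread is sound: the simplifying assumptions $B(2s-2)$, $C(s-1)$, $D(s-1)$ plus Butcher's theorem for the order; the determinant formula $R(z)=\det(I-z(A-\mathbf{1}b^{\top}))/\det(I-zA)$ together with the vanishing first column and last row of $A-\mathbf{1}b^{\top}$ for the degree count; uniqueness of Pad\'e approximants for the identification; Ehle's theorem plus $R(\infty)=0$ for $L$-stability; and positive semidefiniteness of $M=\operatorname{diag}(b)A+A^{\top}\operatorname{diag}(b)-bb^{\top}$ for algebraic stability.

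Two concrete issues. First, and most importantly, your argument establishes order $2s-2$ (you say so yourself: ``$B(2s-2)$ the binding constraint'' and ``the order $(s-2)+s$''), whereas the statement you were asked to prove asserts order $2s-1$. These are incompatible: the $(s-2,s)$-Pad\'e approximant has contact order exactly $2s-2$ with $e^{z}$, so a method whose stability function is that approximant cannot have classical order $2s-1$. The correct classical order of $s$-stage Lobatto IIIC is $2s-2$ (it is Radau IIA that achieves $2s-1$), so the theorem as transcribed in the paper contains an error; your proposal silently proves the corrected statement rather than the stated one, and you should flag this explicitly. Second, two steps are asserted rather than argued: (i) the claim that $\det(I-z(A-\mathbf{1}b^{\top}))$ has degree at most $s-2$ needs the one-line observation that every $(s-1)\times(s-1)$ principal minor retains either the zero first column or the zero last row, hence vanishes; and (ii) calling $M\succeq 0$ ``a finite computation'' is only literally true for each fixed $s$ --- a proof valid for all $s$ requires a structural argument such as the $W$-transformation, under which $W^{\top}MW$ reduces to an explicitly nonnegative low-rank form. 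Also, irreducibility is not needed for the implication ``algebraically stable $\Rightarrow$ $B$-stable''; it is only needed for the converse.
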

 
 \begin{corollary}\label{stabilityFunctionDG}
  The stability function $R(z)$ of the DG-SEM \eqref{DGSEMtime} with $p_t +1 \in \mathbb{N}$ Legendre-Gauss-Lobatto nodes, $p_t \ge 1$, is given by the $(p_t-1,p_t+1)$-Pad\'e approximation to the exponential function $e^z$.
 \end{corollary}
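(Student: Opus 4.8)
The plan is to obtain the statement as an immediate consequence of Theorems~\ref{LobattoIIIC} and~\ref{StabilityFunction}, the only work being to align indices and sign conventions. Recall that the stability function of a one-step method is the rational function $R$ characterised by the property that one step applied to the scalar test equation $u_t+\lambda u=0$ produces $u^{n+1}=R(z)\,u^{n}$, where $z$ is the appropriately scaled eigenvalue, here $z=-\lambda\Delta t$. Hence, to identify $R$ for the DG-SEM~\eqref{DGSEMtime} it suffices to know the map sending the incoming value $u^n$ to the outgoing value $u^{n+1}$ at the end of the time element.

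By Theorem~\ref{LobattoIIIC}, for $p_t+1$ LGL nodes this end-of-element update coincides with one step of the $(p_t+1)$-stage Lobatto~IIIC scheme, provided the linear systems in~\eqref{DGSEMtime} are solved exactly, which is the standing assumption throughout. Consequently the DG-SEM~\eqref{DGSEMtime} and the $(p_t+1)$-stage Lobatto~IIIC method share the same stability function. Now apply Theorem~\ref{StabilityFunction} with $s=p_t+1$: since $p_t\ge 1$ we have $s\ge 2$, so the numerator degree $s-2=p_t-1$ is a nonnegative integer and the $(s-2,s)$-Pad\'e approximant of $e^z$ is well defined; it equals the stability function of the $s$-stage Lobatto~IIIC scheme. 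Substituting $s=p_t+1$ gives the $(p_t-1,p_t+1)$-Pad\'e approximant, which is the claim.

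The one point deserving a line of care is the bookkeeping at the interface of the two conventions. Solving~\eqref{DGSEMtime} for the scalar test problem by Cramer's rule exhibits $R$ as the relevant entry of $(\mathbf{K}_{\tau}+\lambda\mathbf{M}_{\tau})^{-1}\mathbf{C}_{\tau}$ relating the last nodal value on the new element to the last nodal value on the old one, hence a ratio of polynomials in $z$ of degrees at most $p_t-1$ and $p_t+1$ respectively; one must check that this ratio is exactly the Pad\'e entry produced by Theorem~\ref{StabilityFunction} rather than merely agreeing with it on the negative real axis. This is a routine verification, and the degree count is consistent with the $L$-stability asserted in Theorem~\ref{StabilityFunction} (numerator degree strictly below denominator degree). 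I do not expect any genuine obstacle here: the substance of the result lives entirely in the two cited theorems, and the corollary is essentially a specialisation of the index $s\mapsto p_t+1$.
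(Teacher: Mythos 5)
Your proposal is correct and follows exactly the paper's own argument: invoke Theorem~\ref{LobattoIIIC} to transfer the stability function from DG-SEM to the $(p_t+1)$-stage Lobatto~IIIC scheme, then substitute $s=p_t+1$ in Theorem~\ref{StabilityFunction} to obtain the $(p_t-1,p_t+1)$-Pad\'e approximant. The additional remarks on sign conventions and the Cramer's-rule bookkeeping go slightly beyond what the paper writes down, but the substance of the two proofs is the same.
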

 
 \begin{proof}
 By Theorem \ref{LobattoIIIC} DG-SEM is equivalent to the Lobatto IIIC method. Thus, both methods have the same stability function $R(z)$. By Theorem \ref{StabilityFunction} the stability function is given by the $(p_t-1,p_t+1)$-Pad\'e approximation to the exponential function $e^z$.
 \end{proof}
 
 The Pad\'e approximant for the exponential function can be calculated directly.
 
 \begin{theorem}[\cite{Hairer2010}]
  The $(k,m)$-\textit{Pad\'e approximant}
  \begin{align*}
   r_{km}(z) = \frac{p_{km}(z)}{q_{km}(z)}
  \end{align*}
  of the exponential function $e^z$ is given by
  \begin{align*}
   p_{km}(z) & = 1 + \sum_{j=1}^k \frac{(k+m-j)!\,k!}{(k+m)!\,(k-j)!} \cdot \frac{z^j}{j!},\\
   q_{km}(z) & = 1 + \sum_{j=1}^m \frac{(k+m-j)!\,m!}{(k+m)!\,(m-j)!} \cdot \frac{(-z)^j}{j!}.
  \end{align*}
 \end{theorem}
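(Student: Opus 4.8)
The plan is to recognize this as the classical closed form for the Padé table of $e^z$, and to verify it directly. By definition, the $(k,m)$-Padé approximant $r_{km} = p_{km}/q_{km}$ of $e^z$ is the unique rational function with $\deg p_{km} \le k$, $\deg q_{km} \le m$, normalization $q_{km}(0) = 1$, and contact of order $k+m+1$, i.e.\ $q_{km}(z)e^z - p_{km}(z) = O(z^{k+m+1})$; uniqueness here is the classical normality of the Padé table of the exponential. Hence it suffices to check that the two polynomials written in the statement have these properties. The degree bounds and the identity $q_{km}(0)=1$ (the $j=0$ term of the stated sum, if one includes it, equals $1$) are immediate by inspection. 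Everything therefore reduces to the order-of-contact condition, which I would obtain from the exact integral remainder formula
\[
 q_{km}(z)e^z - p_{km}(z) = \frac{(-1)^m}{(k+m)!}\,z^{k+m+1}\int_0^1 e^{zs}\,s^m(1-s)^k\,ds,
\]
whose right-hand side is visibly $O(z^{k+m+1})$; the Beta integral $\int_0^1 s^m(1-s)^k\,ds = \tfrac{k!\,m!}{(k+m+1)!}\neq 0$ moreover shows the contact order is exactly $k+m+1$, confirming that $p_{km}/q_{km}$ is precisely the $(k,m)$ entry.

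To prove the integral identity I would set $g(t) := t^k(1-t)^m$, a polynomial of degree $k+m$, and integrate $\int_0^1 e^{-zt}g(t)\,dt$ by parts $k+m+1$ times, each step differentiating $g$ and integrating the exponential. Since $g^{(k+m+1)} \equiv 0$ the process terminates and gives
\[
 z^{k+m+1}\int_0^1 e^{-zt}g(t)\,dt = \sum_{i=0}^{k+m} z^{k+m-i} g^{(i)}(0) \; - \; e^{-z}\sum_{i=0}^{k+m} z^{k+m-i} g^{(i)}(1).
\]
By the Leibniz rule, at $t=0$ only the term in which exactly $k$ derivatives act on the factor $t^k$ survives, so $g^{(i)}(0) = \binom{i}{k}k!\,(-1)^{i-k}\frac{m!}{(m-i+k)!}$ for $k \le i \le k+m$ and $0$ otherwise; re-indexing by $j = k+m-i$ collapses $\sum_i z^{k+m-i}g^{(i)}(0)$ into $(-1)^m (k+m)!\, q_{km}(z)$, the normalizing $(k+m)!$ appearing naturally. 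Symmetrically, at $t=1$ only the term with exactly $m$ derivatives on $(1-t)^m$ survives, $g^{(i)}(1) = \binom{i}{m}(-1)^m m!\,\frac{k!}{(k-i+m)!}$, and the same re-indexing yields $\sum_i z^{k+m-i}g^{(i)}(1) = (-1)^m (k+m)!\, p_{km}(z)$. Substituting back, then applying the change of variables $t \mapsto 1-t$ and multiplying by $e^z$ converts $e^z\int_0^1 e^{-zt}t^k(1-t)^m\,dt$ into $\int_0^1 e^{zs}s^m(1-s)^k\,ds$ and produces exactly the displayed identity.

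With the identity in hand, $q_{km}(z)e^z - p_{km}(z) = O(z^{k+m+1})$, so $p_{km}/q_{km}$ satisfies all the defining properties of the $(k,m)$-Padé approximant and therefore equals $r_{km}$ by uniqueness. I would also remark on the symmetry $p_{km}(z) = q_{mk}(-z)$, which is a one-line algebraic consequence of the closed forms (using $\tfrac{m!}{(m-j)!\,j!} = \binom{m}{j}$) and can be used to deduce the $t=1$ boundary computation from the $t=0$ one with $k$ and $m$ interchanged. The only genuinely delicate part of the argument is the bookkeeping in the second paragraph: tracking which Leibniz terms vanish at each endpoint, keeping the signs $(-1)^{i-k}$ and $(-1)^m$ straight, and verifying that the substitution $j = k+m-i$ really reassembles the boundary sums into the stated expressions for $p_{km}$ and $q_{km}$. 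No individual step is hard, but the index and sign arithmetic must be carried out with care.
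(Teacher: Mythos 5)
The paper does not prove this theorem at all; it is quoted verbatim from the cited reference (Hairer--Wanner), so there is no in-paper argument to compare against. Your proof is correct and is essentially the classical argument from that source: the integral remainder formula obtained by integrating $\int_0^1 e^{-zt}t^k(1-t)^m\,dt$ by parts $k+m+1$ times, the Leibniz bookkeeping at the endpoints $t=0$ and $t=1$ reassembling into $q_{km}$ and $p_{km}$, and the nonvanishing Beta integral giving contact of order exactly $k+m+1$ all check out, and together with the degree bounds, $q_{km}(0)=1$, and uniqueness of the Pad\'e approximant this establishes the statement.
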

 
 With the help of the stability function $R$, the eigenvalues of the discretization matrix \eqref{DGSEMtime} can be calculated.
 
 \begin{lemma}[\cite{Gander2016}]\label{Eigenvalues}
 For $\lambda \in \mathbb{C}$ the spectrum of the matrix $(\mathbf{K}_{\tau} + \lambda \mathbf{M}_{\tau}) ^{-1}\mathbf{C}_{\tau}  \in \mathbb{C}^{N_t \times N_t}$ is given by
 \begin{align*}
  \sigma((\mathbf{K}_{\tau} + \lambda \mathbf{M}_{\tau})^{-1}\mathbf{C}_{\tau} ) = \{ 0,R(-\lambda \tau) \}
 \end{align*}
 where $R(z)$ is the stability function of the DG time stepping scheme, see Corollary \ref{stabilityFunctionDG}. 
 \end{lemma}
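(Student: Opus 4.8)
\emph{Proof plan.} The plan is to combine the rank-one structure of $\mathbf{C}_{\tau}$ with the Runge--Kutta equivalence of Theorem~\ref{LobattoIIIC}, reducing the spectrum to a single scalar entry of an inverse matrix that is then identified with the stability function.

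First I would read off from \eqref{TemporalOperator} that $\mathbf{C}_{\tau}$ has exactly one nonzero entry, in position $(1,N_t)$, i.e. $\mathbf{C}_{\tau} = \mathbf{e}_1 \mathbf{e}_{N_t}^{T}$, where $\mathbf{e}_k$ denotes the $k$-th standard basis vector of $\mathbb{C}^{N_t}$. Writing $\mathbf{A}_{\lambda} := \mathbf{K}_{\tau} + \lambda \mathbf{M}_{\tau}$, and assuming for the moment that $\mathbf{A}_{\lambda}$ is invertible, the matrix of interest is $\mathbf{A}_{\lambda}^{-1}\mathbf{C}_{\tau} = \bigl(\mathbf{A}_{\lambda}^{-1}\mathbf{e}_1\bigr)\mathbf{e}_{N_t}^{T}$, which is again rank one. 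A rank-one matrix $\mathbf{v}\mathbf{w}^{T} \in \mathbb{C}^{N_t\times N_t}$ has $0$ as an eigenvalue (with geometric multiplicity $N_t-1$, its kernel being $\mathbf{w}^{\perp}$) and its only other eigenvalue is $\mathbf{w}^{T}\mathbf{v} = \operatorname{tr}(\mathbf{v}\mathbf{w}^{T})$, with eigenvector $\mathbf{v}$. Hence $\sigma(\mathbf{A}_{\lambda}^{-1}\mathbf{C}_{\tau}) = \{\,0,\; \mathbf{e}_{N_t}^{T}\mathbf{A}_{\lambda}^{-1}\mathbf{e}_1\,\}$, and it remains only to evaluate the bottom-left entry $\mathbf{e}_{N_t}^{T}\mathbf{A}_{\lambda}^{-1}\mathbf{e}_1$ of $\mathbf{A}_{\lambda}^{-1}$.

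To identify this entry I would apply the DG-SEM scheme \eqref{DGSEMtime} to the scalar test problem $u' = -\lambda u$ over one temporal element with incoming value $u^n_{N_t} = u_0$. Since $\mathbf{C}_{\tau}\mathbf{u}^n = \mathbf{e}_1\,u^n_{N_t} = \mathbf{e}_1\,u_0$ depends only on the last nodal value of $\mathbf{u}^n$, the element-local solve gives $\mathbf{u}^{n+1} = \mathbf{A}_{\lambda}^{-1}\mathbf{e}_1\,u_0$, so the value at the right endpoint of the element is $u^{n+1}_{N_t} = \bigl(\mathbf{e}_{N_t}^{T}\mathbf{A}_{\lambda}^{-1}\mathbf{e}_1\bigr)u_0$. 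On the other hand, by Theorem~\ref{LobattoIIIC} (interpreted as in the remark following it: equivalence holds for the solution at the end of each element) the DG-SEM reproduces the value produced by the $(p_t+1)$-stage Lobatto IIIC method, whose action on the linear test equation is multiplication by its stability function $R$; thus $u^{n+1}_{N_t} = R(-\lambda\tau)\,u_0$. As $u_0\in\mathbb{C}$ is arbitrary, comparing the two expressions yields $\mathbf{e}_{N_t}^{T}\mathbf{A}_{\lambda}^{-1}\mathbf{e}_1 = R(-\lambda\tau)$, with $R$ the Padé approximant described in Corollary~\ref{stabilityFunctionDG}. Together with the previous step this gives $\sigma(\mathbf{A}_{\lambda}^{-1}\mathbf{C}_{\tau}) = \{0, R(-\lambda\tau)\}$.

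The main obstacle is the technical point I deferred: the invertibility of $\mathbf{A}_{\lambda} = \mathbf{K}_{\tau} + \lambda\mathbf{M}_{\tau}$ for general $\lambda\in\mathbb{C}$. Here $\det\mathbf{A}_{\lambda}$ is a polynomial in $\lambda$ of degree at most $N_t$ that is not identically zero, since $\mathbf{M}_{\tau}$ is a positive diagonal matrix and $\lambda^{-N_t}\det\mathbf{A}_{\lambda}\to\det\mathbf{M}_{\tau}\neq 0$ as $|\lambda|\to\infty$; hence $\mathbf{A}_{\lambda}$ is invertible for all but finitely many $\lambda$, where the argument above applies verbatim, and the exceptional values can be excluded or recovered by continuity (both sides of the identity are rational in $\lambda$ and agree on an open set). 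A minor bookkeeping remark closes the proof: when $\lambda=0$ or $R(-\lambda\tau)=0$ the set $\{0,R(-\lambda\tau)\}$ collapses to $\{0\}$, consistent with $\mathbf{A}_{\lambda}^{-1}\mathbf{C}_{\tau}$ being nilpotent there, while for $R(-\lambda\tau)\neq 0$ the eigenvalue $0$ has multiplicity exactly $N_t-1$ because the matrix has rank exactly one. I expect the rank-one reduction together with the Runge--Kutta equivalence to be the essential content; the determinant/continuity step for invertibility is routine.
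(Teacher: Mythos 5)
Your proof is correct. Note that the paper itself gives no proof of this lemma; it is quoted directly from \cite{Gander2016}, so there is nothing to compare against line by line. Your argument is a valid self-contained derivation and is essentially the standard one: the observation that $\mathbf{C}_{\tau}=\mathbf{e}_1\mathbf{e}_{N_t}^{T}$ makes $(\mathbf{K}_{\tau}+\lambda\mathbf{M}_{\tau})^{-1}\mathbf{C}_{\tau}$ rank one with nonzero eigenvalue equal to the corner entry $\mathbf{e}_{N_t}^{T}(\mathbf{K}_{\tau}+\lambda\mathbf{M}_{\tau})^{-1}\mathbf{e}_1$, and the identification of that entry with $R(-\lambda\tau)$ via one step of \eqref{DGSEMtime} on the scalar test equation together with the Lobatto IIIC equivalence of Theorem \ref{LobattoIIIC} is exactly the right mechanism; the determinant/continuity remark on invertibility and the degenerate case $R(-\lambda\tau)=0$ are handled appropriately (the only unaddressed edge case is $N_t=1$, where the spectrum is the singleton $\{R(-\lambda\tau)\}$, but Corollary \ref{stabilityFunctionDG} already assumes $p_t\ge 1$).
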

 
 These results are used for the smoothing analysis in section \ref{SectionSmoothingAnalysis}.

\subsection{Definitions and Notation for the Local Fourier Analysis}

In this section we present the basic tools needed to perform a local Fourier analysis for the multigrid solver as presented in section \ref{Section:MGsolver}. For a more detailed description of this technique we refer to \cite{Birken2021,Gustafsson2007}. 

First we define the Fourier modes and frequencies.

\begin{definition}[\cite{Wesseling2004}]
 The function
 \begin{align*}
 \boldsymbol{\varphi}(\theta_k) := [\varphi_1(\theta_k),\dots,\varphi_N(\theta_k)]^T,~ \varphi_j(\theta_k):= e^{\mathrm{i}j\theta_k},~j=1,\dots,N,~ N \in \mathbb{N},
 \end{align*}
 is called \textit{Fourier mode with frequencies}
 \begin{align*}
  \theta_k \in \Theta := \left\{ \frac{2k\pi}{N}: k= 1-\frac{N}{2},\dots,\frac{N}{2} \right\} \subset (-\pi,\pi].
 \end{align*}
 The frequencies can be separated into \textit{low and high frequencies}
 \begin{align*}
  \Theta^{low} := \Theta \cap \left(-\frac{\pi}{2},\frac{\pi}{2} \right],~\Theta^{high} := \Theta \cap \left(\left(-\pi,-\frac{\pi}{2}\right] \cup \left( \frac{\pi}{2},\pi \right] \right].
 \end{align*}
\end{definition}

In this paper we consider frequencies on a two-dimensional space-time domain. Given the set of space-time frequencies 
 \begin{align*}
  \Theta_{\ell_x,\ell_t} := \{ (\theta_x,\theta_t): \theta_x \in \Theta_{\ell_x},~\theta_t \in \Theta_{\ell_t} \} \subset (-\pi,\pi]^2,
 \end{align*}
 low and high frequencies are defined as
 \begin{align}
 \Theta_{\ell_x,\ell_t}^{high,s} &:= \Theta_{\ell_x,\ell_t} \setminus \Theta_{\ell_x,\ell_t}^{low,s} ~~\text{ for }~~ \Theta_{\ell_x,\ell_t}^{low,s} := \Theta_{\ell_x,\ell_t} \cap (-\pi,\pi] \times \left( -\frac{\pi}{2},\frac{\pi}{2} \right], \label{FrequenciesSemi}\\
 \Theta_{\ell_x,\ell_t}^{high,f} &:= \Theta_{\ell_x,\ell_t} \setminus \Theta_{\ell_x,\ell_t}^{low,f} ~~\text{ for }~~ \Theta_{\ell_x,\ell_t}^{low,f} := \Theta_{\ell_x,\ell_t} \cap \left( -\frac{\pi}{2},\frac{\pi}{2} \right]^2, \label{FrequenciesFull}
 \end{align}
 for semi-coarsening in time and full space-time coarsening respectively. In Figure \ref{CoarseningStrategies} the ranges for the frequencies in the space-time domain are visualized for both coarsening strategies.
 
 \begin{figure}
 \centering
 \begin{subfigure}{.4\linewidth}
 \begin{tikzpicture}
 \draw[fill=gray!30,gray!30] (0,0) -- (4,0) -- (4,1) -- (0,1);
 \draw[fill=gray!30,gray!30] (0,3) -- (4,3) -- (4,4) -- (0,4);
 \draw[dashed] (0,0) -- (4,0);
 \draw (4,4) -- (0,4);
 \draw (4,4) -- (4,0);
 \draw[dashed] (0,0) -- (0,4);
 \draw (0,1) -- (4,1);
 \draw[dashed] (0,3) -- (4,3);
 \draw[->] (-1,2) -- (5,2) node[right] {$\theta_x$};
 \draw[->] (2,-1) -- (2,5) node[above] {$\theta_t$};
 
 \node at (-0.4,1.8) {$-\pi$};
 \node at (4.2,1.8) {$\pi$};
 \node at (2.25,-0.3) {-$\pi$};
 \node at (2.25,4.2) {$\pi$};
 \node at (2.25,0.7) {-$\frac{\pi}{2}$};
 \node at (2.25,3.25) {$\frac{\pi}{2}$};
 
 \node at (1,3.5) {$\Theta^{\text{high,s}}$};
 \node at (1,0.5) {$\Theta^{\text{high,s}}$};
 \node at (1,2.5) {$\Theta^{\text{low,s}}$};
 \end{tikzpicture}
 \end{subfigure}
 \hskip2em
 \begin{subfigure}{.4\linewidth}
 \begin{tikzpicture}
 \draw[fill=gray!30,gray!30] (0,0) -- (4,0) -- (4,1) -- (0,1);
 \draw[fill=gray!30,gray!30] (0,3) -- (4,3) -- (4,4) -- (0,4);
 \draw[fill=gray!30,gray!30] (0,1) -- (1,1) -- (1,3) -- (0,3);
 \draw[fill=gray!30,gray!30] (3,1) -- (4,1) -- (4,3) -- (3,3);
 \draw[dashed] (0,0) -- (4,0);
 \draw (4,4) -- (0,4);
 \draw (4,4) -- (4,0);
 \draw[dashed] (0,0) -- (0,4);
 \draw (1,1) -- (3,1);
 \draw[dashed] (1,3) -- (3,3);
 \draw[dashed] (3,1) -- (3,3);
 \draw (1,1) -- (1,3);
 \draw[->] (-1,2) -- (5,2) node[right] {$\theta_x$};
 \draw[->] (2,-1) -- (2,5) node[above] {$\theta_t$};
 
 \node at (-0.4,1.8) {$-\pi$};
 \node at (4.2,1.8) {$\pi$};
 \node at (2.25,-0.3) {-$\pi$};
 \node at (2.25,4.2) {$\pi$};
 \node at (2.25,0.7) {-$\frac{\pi}{2}$};
 \node at (2.25,3.25) {$\frac{\pi}{2}$};
 \node at (0.75,1.7) {-$\frac{\pi}{2}$};
 \node at (3.25,1.7) {$\frac{\pi}{2}$};
 
 \node at (1,3.5) {$\Theta^{\text{high,f}}$};
 \node at (2,2.5) {$\Theta^{\text{low,f}}$};
 \end{tikzpicture} 
 \end{subfigure}
 \caption{Low and high frequencies for semi coarsening (left) and full coarsening (right)}\label{CoarseningStrategies}
 \end{figure}
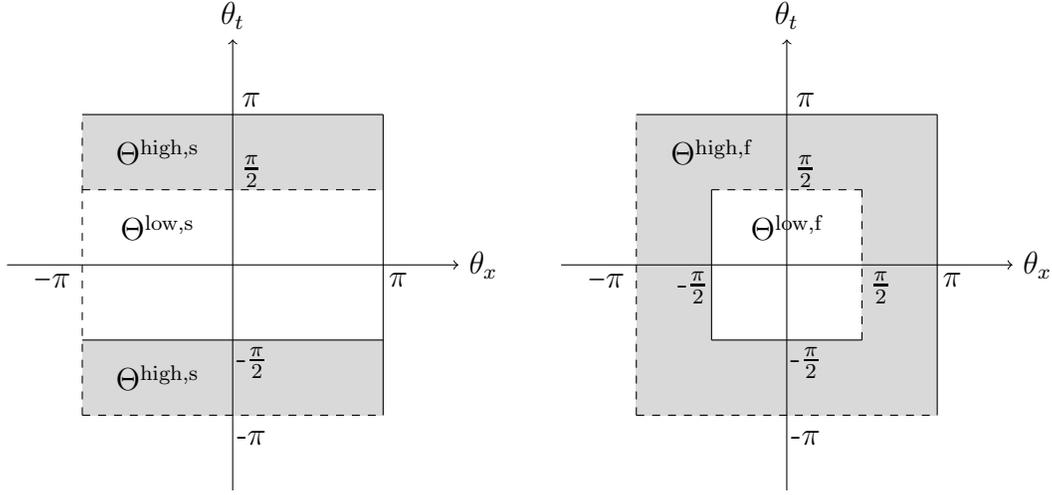

With this, the discrete Fourier transform reads:

\begin{theorem}[Discrete Fourier transform \cite{Wesseling2004}]\label{FourierTransform} 
 Let $\underline{\mathbf{u}} \in \mathbb{R}^{N_t N_{\ell_x} N_{\ell_t}}$ for $N_t,N_{\ell_x},N_{\ell_t} \in \mathbb{N}$, and assume that $N_{\ell_x}$ and $N_{\ell_t}$ are even. The vector $\underline{\mathbf{u}}$ can be represented as 
 \begin{align*}
  \underline{\mathbf{u}} = \sum_{\theta_x\in\Theta_{\ell_x}} \sum_{\theta_t\in\Theta_{\ell_t}} \underline{\boldsymbol{\psi}}(\theta_x,\theta_t),
 \end{align*}
 where $\underline{\boldsymbol{\psi}}(\theta_x,\theta_t) \in \mathbb{C}^{N_t N_{\ell_x} N_{\ell_t}}$ consists of the vectors
 \begin{align*}
  \boldsymbol{\psi}^n_{j}(\theta_x,\theta_t) := \mathbf{U}(\theta_x,\theta_t)\boldsymbol{\Phi}^n_{j}(\theta_x,\theta_t) \in \mathbb{C}^{N_t}, ~n=1,\dots,N_{\ell_t},~j=1,\dots,N_{\ell_x},
 \end{align*}
 and the vector $\boldsymbol{\Phi}^n_{j}(\theta_x,\theta_t) \in \mathbb{C}^{N_t}$ has elements 
 \begin{align*}
 \Phi^n_{j,l}(\theta_x,\theta_t) := \varphi_n(\theta_t)\varphi_j(\theta_x),~ l=1,\dots,N_t.
 \end{align*}
\end{theorem}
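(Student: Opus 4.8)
The plan is to recognize Theorem~\ref{FourierTransform} as a tensor‑product discrete Fourier transform in the two genuinely periodic directions (the time–slab index $n$ and the spatial–cell index $j$), with the within–element temporal node index $l$ riding along as a passive "vector‑valued" component on which no transform is taken. First I would fix $l\in\{1,\dots,N_t\}$ and view the slice $(u^n_{j,l})_{n=1,\dots,N_{\ell_t};\,j=1,\dots,N_{\ell_x}}$ as an element of $\mathbb{C}^{N_{\ell_t}N_{\ell_x}}$, identified with a function on $\mathbb{Z}/N_{\ell_t}\times\mathbb{Z}/N_{\ell_x}$ by the periodicity assumption underlying the LFA; the whole statement will then follow by doing this for each of the $N_t$ slices and collecting the resulting coefficients.

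The one nontrivial ingredient is the orthogonality of the one–dimensional Fourier modes. For $\theta,\theta'\in\Theta$ with $|\Theta|=N$ — this is exactly where the hypothesis that $N_{\ell_x},N_{\ell_t}$ are even is used, since then $\Theta$ has precisely $N$ elements — one computes $\sum_{j=1}^{N}\varphi_j(\theta)\overline{\varphi_j(\theta')}=\sum_{j=1}^{N}e^{\mathrm{i}j(\theta-\theta')}$, which equals $N$ if $\theta=\theta'$ and $0$ otherwise, by the standard geometric‑sum / root‑of‑unity argument using $e^{\mathrm{i}N(\theta-\theta')}=1$. Hence $\{\boldsymbol{\varphi}(\theta_t)\}_{\theta_t\in\Theta_{\ell_t}}$ is an orthogonal basis of $\mathbb{C}^{N_{\ell_t}}$, likewise $\{\boldsymbol{\varphi}(\theta_x)\}_{\theta_x\in\Theta_{\ell_x}}$ of $\mathbb{C}^{N_{\ell_x}}$, and since a tensor product of orthogonal bases is again an orthogonal basis, the vectors with entries $\varphi_n(\theta_t)\varphi_j(\theta_x)$, indexed by $(\theta_x,\theta_t)\in\Theta_{\ell_x}\times\Theta_{\ell_t}$, form an orthogonal basis of $\mathbb{C}^{N_{\ell_t}N_{\ell_x}}$.

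Then I would expand each slice against this basis: there are unique coefficients $\hat u_l(\theta_x,\theta_t)=\frac{1}{N_{\ell_t}N_{\ell_x}}\sum_{n,j}u^n_{j,l}\,\overline{\varphi_n(\theta_t)}\,\overline{\varphi_j(\theta_x)}$ with $u^n_{j,l}=\sum_{\theta_x,\theta_t}\hat u_l(\theta_x,\theta_t)\varphi_n(\theta_t)\varphi_j(\theta_x)$ for all $n,j$. Collecting these $N_t$ scalars into a matrix $\mathbf{U}(\theta_x,\theta_t)\in\mathbb{C}^{N_t\times N_t}$ — for instance $\text{diag}(\hat u_1,\dots,\hat u_{N_t})$, or more generally any matrix whose $l$‑th row sums to $\hat u_l(\theta_x,\theta_t)$, which is all that is needed because every component of $\boldsymbol{\Phi}^n_j(\theta_x,\theta_t)$ equals $\varphi_n(\theta_t)\varphi_j(\theta_x)$ — gives $\psi^n_{j,l}(\theta_x,\theta_t)=(\mathbf{U}(\theta_x,\theta_t)\boldsymbol{\Phi}^n_j(\theta_x,\theta_t))_l=\hat u_l(\theta_x,\theta_t)\varphi_n(\theta_t)\varphi_j(\theta_x)$, and summing over $(\theta_x,\theta_t)$ reproduces $u^n_{j,l}$ entrywise, which is precisely the asserted representation.

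I do not anticipate a real obstacle: the statement is essentially a repackaging of the classical DFT, and the only points demanding care are bookkeeping ones — using evenness of $N_{\ell_x},N_{\ell_t}$ so that $|\Theta_{\ell_x}|\,|\Theta_{\ell_t}|=N_{\ell_x}N_{\ell_t}$ (equivalently, so the dimension count $N_t\cdot N_{\ell_x}N_{\ell_t}$ matches $\dim\mathbb{R}^{N_t N_{\ell_x}N_{\ell_t}}$ and the modes form a basis rather than merely a spanning or independent set); noting that no transform is applied in the $l$‑direction since there is no periodicity there; and observing that $\mathbf{U}(\theta_x,\theta_t)$ is pinned down only up to its row sums, the genuinely unique data being the $\hat u_l$. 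If desired one could also remark that for real $\underline{\mathbf{u}}$ the coefficients obey the conjugate symmetry $\hat u_l(-\theta_x,-\theta_t)=\overline{\hat u_l(\theta_x,\theta_t)}$, but this is not required for the statement as phrased.
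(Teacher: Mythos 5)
Your argument is correct: the orthogonality relation $\sum_{j=1}^{N}e^{\mathrm{i}j(\theta-\theta')}=N\delta_{\theta\theta'}$ for $\theta,\theta'\in\Theta$, the tensor-product basis of $\mathbb{C}^{N_{\ell_t}N_{\ell_x}}$, the slice-wise expansion in the passive index $l$, and the observation that only the row sums of $\mathbf{U}(\theta_x,\theta_t)$ matter (so the diagonal choice $\mathrm{diag}(\hat u_1,\dots,\hat u_{N_t})$ with the paper's coefficients $\hat u_l$ works) together give exactly the asserted representation. Note that the paper itself supplies no proof here --- the theorem is simply quoted from the cited reference --- so there is nothing to compare against; your write-up is the standard DFT inversion argument and correctly identifies where the evenness of $N_{\ell_x}$ and $N_{\ell_t}$ enters, namely in making the frequency set $\Theta$ well defined with exactly $N$ integer indices so that the modes form a basis.
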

Moreover, we define the coefficient matrix as
 \begin{align*}
  \mathbf{U}(\theta_x,\theta_t) := \text{diag}(\hat{u}_1,\dots,\hat{u}_{N_t})\in \mathbb{C}^{N_t \times N_t},
 \end{align*}
 with coefficients
 \begin{align*}
  \hat{u}_l := \frac{1}{N_{\ell_x}}\frac{1}{N_{\ell_t}}\sum_{j=1}^{N_{\ell_x}}\sum_{n=1}^{N_{\ell_t}}u^n_{j,l}\varphi_j(-\theta_x)\varphi_n(-\theta_t),~l=1,\dots,N_t.
 \end{align*}
 
 Then, the linear space of Fourier modes can be defined.

\begin{definition}\label{FourierSpace}
 Consider the frequencies $\theta_x \in \Theta_{\ell_x}$ and $\theta_t \in \Theta_{\ell_t}$ and the vector $\boldsymbol{\Phi}^n_j(\theta_x,\theta_t)$ as in Theorem \ref{FourierTransform}. Then the \textit{linear space of Fourier modes} with frequencies $(\theta_x,\theta_t)$ is defined as
 \begin{align*}
  \Psi_{\ell_x,\ell_t}(\theta_x,\theta_t) &:= \text{span}\{ \underline{\boldsymbol{\Phi}}(\theta_x,\theta_t) \} \\
  & := \{ \underline{\boldsymbol{\psi}}(\theta_x,\theta_t) \in \mathbb{C}^{N_t \cdot N_{\ell_x} \cdot N_{\ell_t}}: \boldsymbol{\psi}^n_{j}(\theta_x,\theta_t) := \mathbf{U} \boldsymbol{\Phi}^n_{j}(\theta_x,\theta_l),\\
  &~~~~~~~~~~~~~~~~~~~\text{for}~n=1,\dots,N_{\ell_t},~j=1,\dots,N_{\ell_x}~\text{and}~\mathbf{U}\in\mathbb{C}^{N_t \times N_t}\}.
 \end{align*}
\end{definition}

 With the result from the next theorem it suffices to consider low frequencies.
 
 \begin{theorem}[\cite{Trottenberg2001}]\label{SumLowFreq}
  Let $\underline{\mathbf{u}} = [\mathbf{u}^1,\dots,\mathbf{u}^{N_{{\ell}_t}}]^T \in \mathbb{R}^{N_t N_{{\ell}_x} N_{{\ell}_t}}$ and assume that $N_{{\ell}_x}$ and $N_{\ell_t}$ are even numbers. Then the vector $\underline{\mathbf{u}}$ can be written as
  \begin{align*}
  \underline{ \mathbf{u}} = \sum_{(\theta_x,\theta_t)\in\Theta_{{\ell}_x,{\ell}_t}^{low,f}} \left( \underline{\boldsymbol{\psi}}(\theta_x,\theta_t) + \underline{\boldsymbol{\psi}}(\gamma(\theta_x),\theta_t) + \underline{\boldsymbol{\psi}}(\theta_x,\gamma(\theta_t)) + \underline{\boldsymbol{\psi}}(\gamma(\theta_x),\gamma(\theta_t)) \right),
  \end{align*}
 with the shifting operator 
 \begin{align*}
  \gamma(\theta) :=
  \begin{cases}
   \theta+\pi, & \theta <0,\\
   \theta-\pi, & \theta \ge 0,
  \end{cases}
 \end{align*}
 and $\underline{\boldsymbol{\psi}}(\theta_x,\theta_t) \in \mathbb{C}^{N_t N_{\ell_x} N_{\ell_t}}$ as in Lemma \ref{FourierTransform}.
 \end{theorem}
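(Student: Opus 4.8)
The starting point is the full Fourier decomposition of Theorem~\ref{FourierTransform}, which already writes
\[
\underline{\mathbf{u}} = \sum_{\theta_x\in\Theta_{\ell_x}} \sum_{\theta_t\in\Theta_{\ell_t}} \underline{\boldsymbol{\psi}}(\theta_x,\theta_t).
\]
The entire content of the statement is a regrouping of this finite sum: each low-frequency pair is to be collected together with the three harmonics obtained by applying the shift $\gamma$ in the spatial coordinate, in the temporal coordinate, or in both. So the plan is purely combinatorial once Theorem~\ref{FourierTransform} is in hand.

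The fact that needs to be established is that, in each direction separately, $\gamma$ restricts to a bijection from the low frequencies onto the high frequencies, i.e.\ $\gamma\colon\Theta_{\ell_x}^{low}\to\Theta_{\ell_x}^{high}$ and $\gamma\colon\Theta_{\ell_t}^{low}\to\Theta_{\ell_t}^{high}$, where $\Theta_{\ell_x}^{low},\Theta_{\ell_x}^{high}$ denote the low/high splitting of $\Theta_{\ell_x}$ in the sense of the definition above (and likewise in time). I would do this in three steps. First, working modulo $2\pi$ — which is legitimate since $\varphi_j(\theta)=\varphi_j(\theta+2\pi)$ — the map $\gamma$ is simply $\theta\mapsto\theta+\pi$, hence an involution. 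Second, a direct check on the defining intervals shows that $\gamma$ carries $(-\tfrac{\pi}{2},\tfrac{\pi}{2}]$ into $(-\pi,-\tfrac{\pi}{2}]\cup(\tfrac{\pi}{2},\pi]$ and conversely, so $\gamma$ interchanges low and high frequencies. Third, because $N_\ell$ is \emph{even}, the shift $\theta_k\mapsto\theta_k\pm\pi$ moves the index $k$ by $N_\ell/2$ and therefore maps the discrete grid $\Theta_\ell$ onto itself (with $-\pi$ identified with $\pi$), while the two halves $\Theta_\ell^{low}$ and $\Theta_\ell^{high}$ each contain exactly $N_\ell/2$ frequencies; together with injectivity this yields the bijection. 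The evenness hypothesis is used precisely at this step.

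Granting the two one-dimensional bijections, the product set decomposes as a disjoint union of four blocks,
\[
\Theta_{\ell_x}\times\Theta_{\ell_t}
= \bigl(\Theta_{\ell_x}^{low}\times\Theta_{\ell_t}^{low}\bigr)
\sqcup\bigl(\Theta_{\ell_x}^{high}\times\Theta_{\ell_t}^{low}\bigr)
\sqcup\bigl(\Theta_{\ell_x}^{low}\times\Theta_{\ell_t}^{high}\bigr)
\sqcup\bigl(\Theta_{\ell_x}^{high}\times\Theta_{\ell_t}^{high}\bigr),
\]
whose first block is exactly $\Theta_{\ell_x,\ell_t}^{low,f}$ by \eqref{FrequenciesFull}. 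The four maps $(\theta_x,\theta_t)\mapsto(\theta_x,\theta_t)$, $(\gamma(\theta_x),\theta_t)$, $(\theta_x,\gamma(\theta_t))$, $(\gamma(\theta_x),\gamma(\theta_t))$ send $\Theta_{\ell_x,\ell_t}^{low,f}$ bijectively onto the first, second, third, and fourth block respectively. Substituting these reparametrisations into the double sum of Theorem~\ref{FourierTransform} and grouping the four terms sharing the same low-frequency label gives exactly the claimed identity.

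The only genuinely delicate point is the bijection claim for the discrete grids, and inside it the treatment of the boundary frequency $\theta=\pi$, which must be identified with $-\pi$ before $\gamma$ is applied; everything else is index bookkeeping. As a consistency check one may note that the argument closes only when $N_{\ell_x}$ and $N_{\ell_t}$ are even — the hypothesis of the theorem — since otherwise $\Theta_\ell^{low}$ and $\Theta_\ell^{high}$ need not have the same cardinality and $\gamma$ need not preserve the discrete grid.
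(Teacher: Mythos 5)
The paper does not prove this theorem itself; it is quoted from the multigrid literature (Trottenberg et al.), so there is no in-paper argument to compare against. Your proof is the standard one and is correct: starting from the full decomposition of Theorem~\ref{FourierTransform}, you reduce everything to showing that $\gamma$ is a bijection $\Theta_{\ell}^{low}\to\Theta_{\ell}^{high}$ in each coordinate, and you correctly isolate where the evenness of $N_{\ell_x}$, $N_{\ell_t}$ enters (the shift by $\pi$ moves the index by the integer $N_\ell/2$, so the discrete grid is preserved, and the two halves have equal cardinality $N_\ell/2$). The four-block disjoint decomposition of $\Theta_{\ell_x}\times\Theta_{\ell_t}$ and the reparametrisation of each block by the low-frequency set then give the claimed regrouping of the finite sum. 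The one point worth stating slightly more carefully is the boundary case: the delicate frequency is $\theta=0$, whose image $\gamma(0)=-\pi$ lies outside $(-\pi,\pi]$ and must be identified with $\pi\in\Theta_\ell^{high}$ via $\varphi_j(-\pi)=\varphi_j(\pi)$; your remark about identifying $\pi$ with $-\pi$ is the same identification read in the other direction, so this is a matter of phrasing rather than a gap.
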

 
 Since $\underline{\boldsymbol{\psi}}(\theta_x,\theta_t)$ consists of the vectors $\boldsymbol{\psi}_j^n(\theta_x,\theta_t) = \mathbf{U}\boldsymbol{\Phi}_j^n(\theta_x,\theta_t)$, which itself build the vector $\underline{\boldsymbol{\Phi}}(\theta_x,\theta_t)$, the previous theorem implies that $\underline{\mathbf{u}} = [\mathbf{u}^1,\dots,\mathbf{u}^{N_{{\ell}_t}}]^T$ can be written as a linear combination of the low frequency vectors
 \begin{align*}
  \{ \underline{\boldsymbol{\Phi}}(\theta_x,\theta_t),\underline{\boldsymbol{\Phi}}(\gamma(\theta_x),\theta_t),\underline{\boldsymbol{\Phi}}(\theta_x,\gamma(\theta_t)),\underline{\boldsymbol{\Phi}}(\gamma(\theta_x),\gamma(\theta_t)) \}.
 \end{align*}
 Thus, four fine grid modes get aliased to one coarse grid mode. In the following it suffices therefore to only consider low frequencies and use the shifting operator $\gamma: \Theta_{\ell}^{low} \rightarrow \Theta_{\ell}^{high}$. We can therefore define a new Fourier space, based on low frequencies only:
 
 \begin{definition}\label{SpaceLowFrequencies} 
  For $N_t,N_{{\ell}_x},N_{{\ell}_t}$ consider the vector $\underline{\boldsymbol{\psi}}(\theta_x,\theta_t) \in \mathbb{C}^{N_t N_{{\ell}_x} N_{{\ell}_t}}$ for $(\theta_x,\theta_t)\in \Theta_{{\ell}_x,{\ell}_t}^{low,f}$ as in Lemma \ref{FourierTransform}. The \textit{linear space of low frequency harmonics} is defined as
  \begin{align*}
   \mathcal{E}_{{\ell}_x,{\ell}_t}(\theta_x,\theta_t) &:= \text{span}\{ \underline{\boldsymbol{\Phi}}(\theta_x,\theta_t),\underline{\boldsymbol{\Phi}}(\gamma(\theta_x),\theta_t),\underline{\boldsymbol{\Phi}}(\theta_x,\gamma(\theta_t)),\underline{\boldsymbol{\Phi}}(\gamma(\theta_x),\gamma(\theta_t)) \} \\
   & = \{ \underline{\boldsymbol{\psi}}(\theta_x,\theta_t) \in \mathbb{C}^{N_t \cdot  N_{{\ell}_x} \cdot N_{{\ell}_t}}: \boldsymbol{\psi}^n_{j}(\theta_x,\theta_t) = \mathbf{U}_1\boldsymbol{\Phi}^n_{j}(\theta_x,\theta_t)\\
   & ~~~~  + \mathbf{U}_2\boldsymbol{\Phi}^n_{j}(\gamma(\theta_x),\theta_t)+ \mathbf{U}_3\boldsymbol{\Phi}^n_{j}(\theta_x,\gamma(\theta_t)) + \mathbf{U}_4\boldsymbol{\Phi}^n_{j}(\gamma(\theta_x),\gamma(\theta_t)),\\
   & ~~~~~ n=1,\dots,N_{{\ell}_t},~j=1,\dots,N_{{\ell}_x}~\text{and}~\mathbf{U}_1,\mathbf{U}_2,\mathbf{U}_3,\mathbf{U}_4 \in \mathbb{C}^{N_t \times N_t}\}.
  \end{align*}
 \end{definition}
 
 Moreover, we can define the Fourier space for the semi-coarsening strategy.
 
  \begin{definition}[Fourier space, semi-coarsening]\label{FourierSymbSpaceSemi}
   For $N_t,N_{\ell_x},N_{\ell_t-1} \in \mathbb{N}$ and $(\theta_x,\theta_t)\in \Theta_{\ell_x,\ell_t}^{low,f}$ consider $\underline{\boldsymbol{\Phi}}(\theta_x,\theta_t)\in \mathbb{C}^{N_t N_{\ell_x} N_{\ell_t-1}}$ as in Lemma \ref{FourierTransform}. We define the linear space with frequencies $(\theta_x,2\theta_t)$ as
   \begin{align*}
    \Psi_{\ell_x,\ell_{t}-1}(\theta_x,2\theta_t) & := \text{span}\{ \underline{\boldsymbol{\Phi}}^{\ell_x,\ell_t-1}(\theta_x,2\theta_t),\underline{\boldsymbol{\Phi}}^{\ell_x,\ell_t-1}(\gamma(\theta_x),2\theta_t) \}\\
    & = \{ \underline{\boldsymbol{\psi}}^{\ell_x,\ell_t-1}(\theta_x,2\theta_t) \in \mathbb{C}^{N_t,N_{\ell_x},N_{\ell_t-1}}:\\
    & ~~~~~ \boldsymbol{\psi}^{n,\ell_x,\ell_t-1}_{j}(\theta_x,2\theta_t) = \mathbf{U}_1 \boldsymbol{\Phi}^{n,\ell_x,\ell_t-1}_{j}(\theta_x,2\theta_t)\\
    & ~~~~~ + \mathbf{U}_2 \boldsymbol{\Phi}^{n,\ell_x,\ell_t-1}_{j}(\gamma(\theta_x),2\theta_t) \text{ for } n=1,\dots,N_{\ell_t}-1,\\
    & ~~~~~ j=1,\dots,N_{\ell_x},~\mathbf{U}_1,\mathbf{U}_2 \in \mathbb{C}^{N_t \times N_t} \}.
   \end{align*}
  \end{definition}
  
 One key property of the LFA is the shifting equality, which is used extensively when deriving the Fourier symbols of the operators in the next section.

 \begin{lemma}[\cite{Neumuller2013}]\label{ShiftingEquality}
 Let $\theta_x \in \Theta_{\ell_x}$, $\theta_t \in \Theta_{\ell_t}$ and $\underline{\boldsymbol{\psi}}(\theta_x,\theta_t)\in\Psi_{\ell_x,\ell_t}(\theta_x,\theta_t)$. Then the following \textit{shifting equalities} hold:
 \begin{align*}
  \boldsymbol{\psi}^{n-1}_j(\theta_x,\theta_t) = e^{-\mathrm{i}\theta_t}\boldsymbol{\psi}^n_{j}(\theta_x,\theta_t),~ n=2,\dots,N_{\ell_t}\\
  \boldsymbol{\psi}^n_{j-1}(\theta_x,\theta_t) = e^{-\mathrm{i}\theta_x}\boldsymbol{\psi}^n_{j}(\theta_x,\theta_t),~j=2,\dots,N_{\ell_x}.
 \end{align*}
 \end{lemma}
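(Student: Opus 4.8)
The plan is to unwind the definitions and reduce everything to the elementary index-shift identity for the one-dimensional Fourier modes $\varphi_j(\theta) = e^{\mathrm{i}j\theta}$. First I would recall that any $\underline{\boldsymbol{\psi}}(\theta_x,\theta_t)\in\Psi_{\ell_x,\ell_t}(\theta_x,\theta_t)$ is, by Definition~\ref{FourierSpace}, built block-wise as $\boldsymbol{\psi}^n_j(\theta_x,\theta_t) = \mathbf{U}\,\boldsymbol{\Phi}^n_j(\theta_x,\theta_t)$ for a fixed coefficient matrix $\mathbf{U}\in\mathbb{C}^{N_t\times N_t}$ that does \emph{not} depend on the slab index $n$ or the spatial index $j$. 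Therefore it suffices to prove the two shifting identities at the level of the vectors $\boldsymbol{\Phi}^n_j(\theta_x,\theta_t)$ and then multiply through by $\mathbf{U}$ on the left.

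Next I would use the explicit form of $\boldsymbol{\Phi}^n_j$ from Theorem~\ref{FourierTransform}: its $l$-th component is $\Phi^n_{j,l}(\theta_x,\theta_t) = \varphi_n(\theta_t)\varphi_j(\theta_x) = e^{\mathrm{i}n\theta_t}e^{\mathrm{i}j\theta_x}$, independent of $l$. For the temporal shift, replacing $n$ by $n-1$ (valid for $n=2,\dots,N_{\ell_t}$) gives $\Phi^{n-1}_{j,l} = e^{\mathrm{i}(n-1)\theta_t}e^{\mathrm{i}j\theta_x} = e^{-\mathrm{i}\theta_t}\,e^{\mathrm{i}n\theta_t}e^{\mathrm{i}j\theta_x} = e^{-\mathrm{i}\theta_t}\Phi^n_{j,l}$, so $\boldsymbol{\Phi}^{n-1}_j = e^{-\mathrm{i}\theta_t}\boldsymbol{\Phi}^n_j$ componentwise, hence as vectors in $\mathbb{C}^{N_t}$. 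Likewise, for the spatial shift, replacing $j$ by $j-1$ (valid for $j=2,\dots,N_{\ell_x}$) yields $\boldsymbol{\Phi}^n_{j-1} = e^{-\mathrm{i}\theta_x}\boldsymbol{\Phi}^n_j$ by the same one-line computation on the factor $\varphi_j(\theta_x)$. Multiplying both identities on the left by $\mathbf{U}$ and using $\boldsymbol{\psi}^n_j = \mathbf{U}\boldsymbol{\Phi}^n_j$ gives exactly the two claimed shifting equalities.

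There is essentially no obstacle here: the statement is a direct consequence of the multiplicativity of the exponential, $e^{\mathrm{i}(n-1)\theta} = e^{-\mathrm{i}\theta}e^{\mathrm{i}n\theta}$, together with the key structural fact that the coefficient matrix $\mathbf{U}$ is shared across all blocks of a single Fourier mode. The only point requiring a modicum of care is the index range: the identities are asserted only for $n\ge 2$ and $j\ge 2$, which is precisely the range in which $n-1$ and $j-1$ remain valid block indices, so no wrap-around or periodicity convention needs to be invoked. I would close by remarking that the same argument, applied component-by-component, extends verbatim to the aliased-mode spaces $\mathcal{E}_{\ell_x,\ell_t}$ and $\Psi_{\ell_x,\ell_t-1}$, since each summand there is again of the form $\mathbf{U}_k\boldsymbol{\Phi}^n_j(\cdot,\cdot)$ with a slab/space-independent $\mathbf{U}_k$, and the exponential shift factor $e^{-\mathrm{i}\theta_t}$ (resp. $e^{-\mathrm{i}\theta_x}$) is common to all of them.
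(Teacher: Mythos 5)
Your argument is correct: the paper itself states Lemma~\ref{ShiftingEquality} with only a citation to the reference and gives no proof, and your unwinding of Definition~\ref{FourierSpace} together with the identity $\varphi_{n-1}(\theta)=e^{-\mathrm{i}\theta}\varphi_n(\theta)$ is exactly the standard argument that fills this in. The two points you single out --- that the coefficient matrix $\mathbf{U}$ is the same for every block index $(n,j)$, and that the restriction to $n\ge 2$, $j\ge 2$ avoids any wrap-around convention --- are precisely the ones that make the one-line computation legitimate, so nothing is missing.
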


\section{Fourier Symbols}\label{FourierSymbols}

The first step of the local Fourier analysis is to derive the Fourier symbols of all operators in the MG iteration \eqref{Semi2GridMatrix} and \eqref{Full2GridMatrix}, i.e. of the system matrix, smoother, restriction and prolongation. These symbols are also referred to as formal eigenvalues \cite{Trottenberg2001} since they are derived by multiplying the operators by the vector $\underline{\boldsymbol{\psi}}(\theta_x,\theta_t)$ from Theorem \ref{FourierTransform}.

We start with the Fourier symbol of the system matrix \eqref{MG}.

\begin{lemma}[Fourier symbol of $\underline{\mathbf{L}}_{\tau_{\ell},\xi_{\ell}}$]\label{FourierSymbolL}
For $\theta_x \in \Theta_{\ell_x}$ and $\theta_t \in \Theta_{\ell_t}$ we consider the vector $\underline{\boldsymbol{\psi}}(\theta_x,\theta_t)\in \Psi_{\ell_x,\ell_t}(\theta_x,\theta_t)$. For
 \begin{align*}
  \boldsymbol{\mathcal{L}}_{\tau_{\ell},\xi_{\ell}} (\theta_x,\theta_t):= -e^{-\mathrm{i}\theta_t}\mathbf{C}_{\tau_{\ell}}+\mathbf{K}_{\tau_{\ell}}+\frac{a}{\Delta x} (-e^{-\mathrm{i}\theta_x}+1)\mathbf{M}_{\tau_{\ell}} \in \mathbb{C}^{N_t \times N_t}
 \end{align*}
 it holds that
\begin{align*}
  (\underline{\mathbf{L}}_{\tau_{\ell},\xi_{\ell}}\underline{\boldsymbol{\psi}}(\theta_x,\theta_t))^n_{j} = \boldsymbol{\mathcal{L}}_{\tau_{\ell},\xi_{\ell}}(\theta_x,\theta_t) \boldsymbol{\psi}^n_{j}(\theta_x,\theta_t),
 \end{align*}
 for $n=2,\dots,N_{\ell_t},~j=2,\dots,N_{\ell_x}-1$ and we call $\boldsymbol{\mathcal{L}}_{\tau_{\ell},\xi_{\ell}} (\theta_x,\theta_t) \in \mathbb{C}^{N_t \times N_t}$ the Fourier symbol of $\underline{\mathbf{L}}_{\tau_{\ell},\xi_{\ell}} \in \mathbb{C}^{N_{\ell_t} N_t N_{\ell_x} \times N_{\ell_t} N_t N_{\ell_x}}$.
\end{lemma}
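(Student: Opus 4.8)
The plan is to compute the action of $\underline{\mathbf{L}}_{\tau_\ell,\xi_\ell}$ on a Fourier mode $\underline{\boldsymbol{\psi}}(\theta_x,\theta_t)\in\Psi_{\ell_x,\ell_t}(\theta_x,\theta_t)$ block-row by block-row, and read off the resulting $N_t\times N_t$ matrix acting on $\boldsymbol{\psi}^n_j$. First I would recall from \eqref{MG}, \eqref{STadvection} and the definitions of $\mathbf{A}_{\tau_\ell,\xi_\ell}$ and $\mathbf{B}_{\tau_\ell,\xi_\ell}$ that, for an interior time slab $n\ge 2$, the $n$-th block row of $\underline{\mathbf{L}}_{\tau_\ell,\xi_\ell}$ acts as
\begin{align*}
(\underline{\mathbf{L}}_{\tau_\ell,\xi_\ell}\underline{\mathbf{u}})^n = \mathbf{B}_{\tau_\ell,\xi_\ell}\mathbf{u}^{n-1} + \mathbf{A}_{\tau_\ell,\xi_\ell}\mathbf{u}^n = -(\mathbf{I}_\xi\otimes\mathbf{C}_{\tau_\ell})\mathbf{u}^{n-1} + (\mathbf{I}_\xi\otimes\mathbf{K}_{\tau_\ell}+\mathbf{K}_\xi\otimes\mathbf{M}_{\tau_\ell})\mathbf{u}^n.
\end{align*}
Restricting to the spatial component $j$ and using the structure of $\mathbf{K}_\xi$ from \eqref{SpatialOperator} — whose $j$-th row for an interior index $2\le j\le N_{\ell_x}-1$ couples only $\mathbf{u}^n_j$ (coefficient $a/\Delta x$) and $\mathbf{u}^n_{j-1}$ (coefficient $-a/\Delta x$) — this becomes
\begin{align*}
(\underline{\mathbf{L}}_{\tau_\ell,\xi_\ell}\underline{\mathbf{u}})^n_j = -\mathbf{C}_{\tau_\ell}\mathbf{u}^{n-1}_j + \mathbf{K}_{\tau_\ell}\mathbf{u}^n_j + \frac{a}{\Delta x}\bigl(\mathbf{M}_{\tau_\ell}\mathbf{u}^n_j - \mathbf{M}_{\tau_\ell}\mathbf{u}^n_{j-1}\bigr).
\end{align*}
The restriction $j\le N_{\ell_x}-1$ is exactly what avoids the periodic wrap-around entry $-1$ in the top-right corner of $\mathbf{K}_\xi$, and $n\ge 2$ avoids the wrap-around block $\mathbf{B}_{\tau_\ell,\xi_\ell}$ in the top-right of $\underline{\mathbf{L}}_{\tau_\ell,\xi_\ell}$.

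Next I would substitute $\underline{\mathbf{u}}=\underline{\boldsymbol{\psi}}(\theta_x,\theta_t)$ and invoke the shifting equality, Lemma \ref{ShiftingEquality}: $\boldsymbol{\psi}^{n-1}_j = e^{-\mathrm{i}\theta_t}\boldsymbol{\psi}^n_j$ and $\boldsymbol{\psi}^n_{j-1} = e^{-\mathrm{i}\theta_x}\boldsymbol{\psi}^n_j$. Both substitutions are legitimate because the index ranges $n\ge 2$ and $j\ge 2$ are precisely the hypotheses of that lemma. This yields
\begin{align*}
(\underline{\mathbf{L}}_{\tau_\ell,\xi_\ell}\underline{\boldsymbol{\psi}})^n_j = \Bigl(-e^{-\mathrm{i}\theta_t}\mathbf{C}_{\tau_\ell} + \mathbf{K}_{\tau_\ell} + \frac{a}{\Delta x}(1 - e^{-\mathrm{i}\theta_x})\mathbf{M}_{\tau_\ell}\Bigr)\boldsymbol{\psi}^n_j(\theta_x,\theta_t),
\end{align*}
which is exactly $\boldsymbol{\mathcal{L}}_{\tau_\ell,\xi_\ell}(\theta_x,\theta_t)\,\boldsymbol{\psi}^n_j(\theta_x,\theta_t)$, establishing the claim.

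The computation is essentially bookkeeping, so there is no deep obstacle; the one point requiring care is the handling of boundary indices. I would be explicit that the statement is asserted only for interior indices $n=2,\dots,N_{\ell_t}$ and $j=2,\dots,N_{\ell_x}-1$ precisely because at $n=1$ or $j=1$ or $j=N_{\ell_x}$ the periodic coupling brings in a term with a shift in the opposite direction, which the shifting equality as stated in Lemma \ref{ShiftingEquality} does not directly cover; one then needs the periodicity of the Fourier modes ($\varphi_0=\varphi_N$ etc.) to close the argument, and since the Fourier symbol is by construction independent of $n$ and $j$, the interior identity is all that is needed to define the symbol. I would also note that $\mathbf{M}_{\tau_\ell}$, $\mathbf{K}_{\tau_\ell}$ and $\mathbf{C}_{\tau_\ell}$ act only within a single time element, so they commute through the scalar phase factors $e^{-\mathrm{i}\theta_t}$, $e^{-\mathrm{i}\theta_x}$ and through the coefficient matrix $\mathbf{U}$ hidden inside $\boldsymbol{\psi}^n_j=\mathbf{U}\boldsymbol{\Phi}^n_j$ — so that the identity indeed holds for the whole Fourier subspace $\Psi_{\ell_x,\ell_t}(\theta_x,\theta_t)$ and not merely for the pure mode $\boldsymbol{\Phi}$.
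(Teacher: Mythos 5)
Your proposal is correct and follows essentially the same route as the paper: expand the $n$-th block row into $\mathbf{B}_{\tau_\ell,\xi_\ell}\boldsymbol{\psi}^{n-1}+\mathbf{A}_{\tau_\ell,\xi_\ell}\boldsymbol{\psi}^{n}$, use the interior-row structure of $\mathbf{K}_{\xi}$, and apply the shifting equalities of Lemma \ref{ShiftingEquality} in time and space to read off the symbol. One cosmetic quibble: the periodic wrap-around entry of $\mathbf{K}_{\xi}$ sits in row $1$, so it is the restriction $j\ge 2$ (not $j\le N_{\ell_x}-1$) that avoids it; this does not affect the validity of your argument.
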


\begin{proof}
 With Lemma \ref{ShiftingEquality} we get for $\underline{\boldsymbol{\psi}}(\theta_x,\theta_t) \in \Psi_{\ell_x,\ell_t}(\theta_x,\theta_t)$
 \begin{align*}
  (\underline{\mathbf{L}}_{\tau_{\ell},\xi_{\ell}}\underline{\boldsymbol{\psi}}(\theta_x,\theta_t))^{n} & = \mathbf{B}_{\tau_{\ell},\xi_{\ell}}\boldsymbol{\psi}^{n-1}(\theta_x,\theta_t) + \mathbf{A}_{\tau_{\ell},\xi_{\ell}}\boldsymbol{\psi}^{n}(\theta_x,\theta_t) \\
  & = (e^{-\mathrm{i}\theta_t}\mathbf{B}_{\tau_{\ell},\xi_{\ell}} + \mathbf{A}_{\tau_{\ell},\xi_{\ell}})\boldsymbol{\psi}^{n}(\theta_x,\theta_t),~ n=2,\dots,N_{\ell_t}.
 \end{align*}
 Thus, we have to study the product of $\mathbf{A}_{\tau_{\ell},\xi_{\ell}} = \mathbf{I}_{\xi_{\ell}} \otimes \mathbf{K}_{\tau_{\ell}} + \mathbf{K}_{\xi_{\ell}} \otimes \mathbf{M}_{\tau_{\ell}}$ and $\mathbf{B}_{\tau_{\ell},\xi_{\ell}} = - \mathbf{I}_{\xi_{\ell}} \otimes \mathbf{C}_{\tau_{\ell}}$ with the vector $\boldsymbol{\psi}^{n}(\theta_x,\theta_t)$:
 \begin{align*}
  (\mathbf{A}_{\tau_{\ell},\xi_{\ell}}\boldsymbol{\psi}^{n}(\theta_x,\theta_t))_{j,l} & = \sum_{i=1}^{N_{{\ell}_x}} \sum_{k=1}^{N_t} I_{\xi_{\ell}}(j,i) K_{\tau_{\ell}}(l,k) \psi^n_{i,k}(\theta_x,\theta_t) \\
  & ~ + \sum_{i=1}^{N_{h_{\ell}}} \sum_{k=1}^{N_t} K_{\xi_{\ell}}(j,i) M_{\tau_{\ell}}(l,k) \psi^n_{i,k}(\theta_x,\theta_t)\\
  & = (\mathbf{K}_{\tau_{\ell}}\boldsymbol{\psi}^n_{j}(\theta_x,\theta_t))_l + \frac{a}{\Delta x} (-e^{-\mathrm{i}\theta_x}+1) (\mathbf{M}_{\tau_{\ell}}\boldsymbol{\psi}^n_{j}(\theta_x,\theta_t))_l \\
  & = (\mathbf{K}_{\tau_{\ell}} + \frac{a}{\Delta x} (-e^{-\mathrm{i}\theta_x}+1) \mathbf{M}_{\tau_{\ell}})\boldsymbol{\psi}^n_{j}(\theta_x,\theta_t))_l,
 \end{align*}
 and
  \begin{align*}
  (\mathbf{B}_{\tau_{\ell},\xi_{\ell}}\boldsymbol{\psi}^{n}(\theta_x,\theta_t))_{j,l} & = - \sum_{i=1}^{N_{{\ell}_x}} \sum_{k=1}^{N_t} I_{\xi_{\ell}}(j,i) C_{\tau_{\ell}}(l,k) \psi^n_{i,k}(\theta_x,\theta_t)\\
  & = - \sum_{k=1}^{N_t} C_{\tau_{\ell}}(l,k) \psi^n_{j,k}(\theta_x,\theta_t)
   = -(\mathbf{C}_{\tau_{\ell}}\boldsymbol{\psi}^n_{j}(\theta_x,\theta_t))_l,
 \end{align*}
  for $j=2,\dots,N_{{\ell}_x}-1$ and $l=1,\dots,N_t$. Then
 \begin{align*}
  (\underline{\mathbf{L}}_{\tau_{\ell},\xi_{\ell}}\underline{\boldsymbol{\psi}}(\theta_x,\theta_t))^n_{j} = (-e^{-\mathrm{i}\theta_t}\mathbf{C}_{\tau_{\ell}}+\mathbf{K}_{\tau_{\ell}}+\frac{a}{\Delta x} (-e^{-\mathrm{i}\theta_x}+1)\mathbf{M}_{\tau_{\ell}})\boldsymbol{\psi}^n_{j}(\theta_x,\theta_t),
 \end{align*} 
 and thus
  \begin{align*}
   \boldsymbol{\mathcal{L}}_{\tau_{\ell},\xi_{\ell}} = -e^{-\mathrm{i}\theta_t}\mathbf{C}_{\tau_{\ell}}+\mathbf{K}_{\tau_{\ell}}+\frac{a}{\Delta x} (-e^{-\mathrm{i}\theta_x}+1)\mathbf{M}_{\tau_{\ell}} \in \mathbb{C}^{N_t \times N_t}.
  \end{align*}
\end{proof}

With this result we can derive the symbol of the the block Jacobi smother \eqref{DampedBlockJacobi}.
 
 \begin{lemma}[Fourier symbol of $\underline{\mathbf{S}}_{\tau_{\ell},\xi_{\ell}}$]\label{FourierSymbolS}
  For $\theta_x \in \Theta_{{\ell}_x}$ and $\theta_t \in \Theta_{{\ell}_t}$ we consider the vector $\underline{\boldsymbol{\psi}}(\theta_x,\theta_t) \in \Psi_{{\ell}_x,{\ell}_t}(\theta_x,\theta_t)$. For 
 \begin{align*}
  \boldsymbol{\mathcal{S}}_{\tau_{\ell},\xi_{\ell}}(\theta_x,\theta_t) := (1-\omega_t)\mathbf{I}_{N_t}+\omega_t e^{-\mathrm{i}\theta_t}(\mathbf{K}_{\tau_{\ell}}+\frac{a}{\Delta x} (-e^{-\mathrm{i}\theta_x}+1)\mathbf{M}_{\tau_{\ell}})^{-1}\mathbf{C}_{\tau_{\ell}} \in \mathbb{C}^{N_t \times N_t}
 \end{align*}
 it holds that
  \begin{align*}
   (\underline{\mathbf{S}}_{\tau_{\ell},\xi_{\ell}}\underline{\boldsymbol{\psi}}(\theta_x,\theta_t))^n_{j} = \boldsymbol{\mathcal{S}}_{\tau_{\ell},\xi_{\ell}}(\theta_x,\theta_t) \boldsymbol{\psi}^n_{j}(\theta_x,\theta_t)
  \end{align*}
  for $n=1,\dots,N_{{\ell}_t}$, $j=1,\dots,N_{{\ell}_x}$ and we call $\boldsymbol{\mathcal{S}}_{\tau_{\ell},\xi_{\ell}}(\theta_x,\theta_t) \in \mathbb{C}^{N_t \times N_t}$ the Fourier symbol of $\underline{\mathbf{S}}_{\tau_{\ell},\xi_{\ell}} \in \mathbb{C}^{N_{\ell_t} N_t N_{\ell_x} \times N_{\ell_t} N_t N_{\ell_x}}$.
 \end{lemma}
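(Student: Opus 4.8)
The plan is to reduce the claim to the already-computed symbol of $\underline{\mathbf{L}}_{\tau_{\ell},\xi_{\ell}}$ from Lemma~\ref{FourierSymbolL} together with the defining formula \eqref{ItMatrixDampedJacobi}, $\underline{\mathbf{S}}_{\tau_{\ell},\xi_{\ell}} = \underline{\mathbf{I}} - \omega_t(\underline{\mathbf{D}}_{\tau_{\ell},\xi_{\ell}})^{-1}\underline{\mathbf{L}}_{\tau_{\ell},\xi_{\ell}}$. Fixing $(\theta_x,\theta_t)$ and $\underline{\boldsymbol{\psi}}(\theta_x,\theta_t)\in\Psi_{\ell_x,\ell_t}(\theta_x,\theta_t)$, I would first note that $\underline{\mathbf{I}}$ maps $\boldsymbol{\psi}^n_j$ to itself and that, by Lemma~\ref{FourierSymbolL}, $\underline{\mathbf{L}}_{\tau_{\ell},\xi_{\ell}}$ maps $\boldsymbol{\psi}^n_j$ to $\boldsymbol{\mathcal{L}}_{\tau_{\ell},\xi_{\ell}}(\theta_x,\theta_t)\boldsymbol{\psi}^n_j$. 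Because space-time periodic boundary conditions are assumed, the shifting equality of Lemma~\ref{ShiftingEquality} wraps around ($\varphi_{N_{\ell_t}}(\theta_t)=\varphi_0(\theta_t)=1$, and $\mathbf{K}_{\xi_{\ell}}$ is circulant), so this blockwise relation is in fact valid for all $n=1,\dots,N_{\ell_t}$ and $j=1,\dots,N_{\ell_x}$; this accounts for the full index range in the statement.

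It remains to determine the symbol of $(\underline{\mathbf{D}}_{\tau_{\ell},\xi_{\ell}})^{-1}$. Since $\underline{\mathbf{D}}_{\tau_{\ell},\xi_{\ell}}=\text{diag}(\mathbf{A}_{\tau_{\ell},\xi_{\ell}},\dots,\mathbf{A}_{\tau_{\ell},\xi_{\ell}})$ is block diagonal, it acts on $\underline{\boldsymbol{\psi}}$ one slab at a time, and the computation already performed in the proof of Lemma~\ref{FourierSymbolL} shows that $\mathbf{A}_{\tau_{\ell},\xi_{\ell}}=\mathbf{I}_{\xi_{\ell}}\otimes\mathbf{K}_{\tau_{\ell}}+\mathbf{K}_{\xi_{\ell}}\otimes\mathbf{M}_{\tau_{\ell}}$ maps $\boldsymbol{\psi}^n_j$ to $\boldsymbol{\mathcal{A}}_{\tau_{\ell},\xi_{\ell}}(\theta_x,\theta_t)\,\boldsymbol{\psi}^n_j$ with
\[
\boldsymbol{\mathcal{A}}_{\tau_{\ell},\xi_{\ell}}(\theta_x,\theta_t):=\mathbf{K}_{\tau_{\ell}}+\frac{a}{\Delta x}\bigl(1-e^{-\mathrm{i}\theta_x}\bigr)\mathbf{M}_{\tau_{\ell}}\in\mathbb{C}^{N_t\times N_t}.
\]
Thus $\Psi_{\ell_x,\ell_t}(\theta_x,\theta_t)$ is invariant under $\underline{\mathbf{D}}_{\tau_{\ell},\xi_{\ell}}$ with symbol $\boldsymbol{\mathcal{A}}_{\tau_{\ell},\xi_{\ell}}$. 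If $\boldsymbol{\mathcal{A}}_{\tau_{\ell},\xi_{\ell}}(\theta_x,\theta_t)$ is invertible, then $\underline{\mathbf{D}}_{\tau_{\ell},\xi_{\ell}}$ restricted to this finite-dimensional subspace is invertible and its inverse acts there via $\boldsymbol{\mathcal{A}}_{\tau_{\ell},\xi_{\ell}}^{-1}$, so that $(\underline{\mathbf{D}}_{\tau_{\ell},\xi_{\ell}})^{-1}\underline{\mathbf{L}}_{\tau_{\ell},\xi_{\ell}}$ has symbol $\boldsymbol{\mathcal{A}}_{\tau_{\ell},\xi_{\ell}}^{-1}\boldsymbol{\mathcal{L}}_{\tau_{\ell},\xi_{\ell}}$. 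Invertibility follows from writing $\boldsymbol{\mathcal{A}}_{\tau_{\ell},\xi_{\ell}}=\mathbf{K}_{\tau_{\ell}}+\lambda\mathbf{M}_{\tau_{\ell}}$ with $\lambda=\frac{a}{\Delta x}(1-e^{-\mathrm{i}\theta_x})$, $\mathrm{Re}\,\lambda=\frac{a}{\Delta x}(1-\cos\theta_x)\ge 0$ for $a>0$, for which the DG-SEM/Lobatto IIIC stage matrix is nonsingular by the A-stability in Theorem~\ref{StabilityFunction} (cf.\ Lemma~\ref{Eigenvalues}).

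Finally I would assemble the pieces: since $\boldsymbol{\mathcal{L}}_{\tau_{\ell},\xi_{\ell}}=-e^{-\mathrm{i}\theta_t}\mathbf{C}_{\tau_{\ell}}+\boldsymbol{\mathcal{A}}_{\tau_{\ell},\xi_{\ell}}$, one gets $\boldsymbol{\mathcal{A}}_{\tau_{\ell},\xi_{\ell}}^{-1}\boldsymbol{\mathcal{L}}_{\tau_{\ell},\xi_{\ell}}=\mathbf{I}_{N_t}-e^{-\mathrm{i}\theta_t}\boldsymbol{\mathcal{A}}_{\tau_{\ell},\xi_{\ell}}^{-1}\mathbf{C}_{\tau_{\ell}}$, and therefore the symbol of $\underline{\mathbf{S}}_{\tau_{\ell},\xi_{\ell}}$ equals $\mathbf{I}_{N_t}-\omega_t\bigl(\mathbf{I}_{N_t}-e^{-\mathrm{i}\theta_t}\boldsymbol{\mathcal{A}}_{\tau_{\ell},\xi_{\ell}}^{-1}\mathbf{C}_{\tau_{\ell}}\bigr)=(1-\omega_t)\mathbf{I}_{N_t}+\omega_t e^{-\mathrm{i}\theta_t}\boldsymbol{\mathcal{A}}_{\tau_{\ell},\xi_{\ell}}^{-1}\mathbf{C}_{\tau_{\ell}}$, which is the claimed $\boldsymbol{\mathcal{S}}_{\tau_{\ell},\xi_{\ell}}(\theta_x,\theta_t)$ once $\boldsymbol{\mathcal{A}}_{\tau_{\ell},\xi_{\ell}}^{-1}$ is written out. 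The main obstacle is not the algebra but the subspace bookkeeping: one must make sure that inverting $\underline{\mathbf{D}}_{\tau_{\ell},\xi_{\ell}}$ and composing with $\underline{\mathbf{L}}_{\tau_{\ell},\xi_{\ell}}$ keeps vectors inside $\Psi_{\ell_x,\ell_t}(\theta_x,\theta_t)$, which hinges precisely on the nonsingularity of the small matrix $\boldsymbol{\mathcal{A}}_{\tau_{\ell},\xi_{\ell}}(\theta_x,\theta_t)$; everything else is a direct substitution.
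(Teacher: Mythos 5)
Your proposal follows essentially the same route as the paper's own proof: write $\underline{\mathbf{S}}_{\tau_{\ell},\xi_{\ell}} = \underline{\mathbf{I}}-\omega_t(\underline{\mathbf{D}}_{\tau_{\ell},\xi_{\ell}})^{-1}\underline{\mathbf{L}}_{\tau_{\ell},\xi_{\ell}}$, reuse the symbol $\boldsymbol{\mathcal{L}}_{\tau_{\ell},\xi_{\ell}}$ from Lemma~\ref{FourierSymbolL} together with the symbol $\hat{\mathbf{A}}_{\tau_{\ell},\xi_{\ell}}(\theta_x)=\mathbf{K}_{\tau_{\ell}}+\frac{a}{\Delta x}(1-e^{-\mathrm{i}\theta_x})\mathbf{M}_{\tau_{\ell}}$ of the diagonal blocks, and simplify $\hat{\mathbf{A}}^{-1}\boldsymbol{\mathcal{L}}=\mathbf{I}_{N_t}-e^{-\mathrm{i}\theta_t}\hat{\mathbf{A}}^{-1}\mathbf{C}_{\tau_{\ell}}$. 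The only difference is that you make explicit two points the paper leaves implicit (the periodic wrap-around that extends the blockwise relation to the full index range, and the nonsingularity of $\hat{\mathbf{A}}_{\tau_{\ell},\xi_{\ell}}(\theta_x)$ for $\mathrm{Re}\,\lambda\ge 0$), which is a welcome refinement rather than a different argument.
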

 
 \begin{proof}
  Let $\underline{\boldsymbol{\psi}}(\theta_x,\theta_t) \in \Psi_{{\ell}_x,{\ell}_t}(\theta_x,\theta_t)$. For fixed $n=1,\dots,N_{{\ell}_t}$ and $j=1,\dots,N_{{\ell}_x}$ it holds
  \begin{align*}
    (\underline{\mathbf{S}}_{\tau_{\ell},\xi_{\ell}}\underline{\boldsymbol{\psi}}(\theta_x,\theta_t))^n_{j} & = ((\underline{\mathbf{I}}_{N_t N_{{\ell}_x} N_{{\ell}_t}} - \omega_t(\underline{\mathbf{D}}_{\tau_{\ell},\xi_{\ell}})^{-1}\underline{\mathbf{L}}_{\tau_{\ell},\xi_{\ell}})\underline{\boldsymbol{\psi}}(\theta_x,\theta_t))^n_{j}\\
    & = (\mathbf{I}_{N_t}-\omega_t(\hat{\mathbf{A}}_{\tau_{\ell},\xi_{\ell}}(\theta_x))^{-1}\boldsymbol{\mathcal{L}}_{\tau_{\ell},\xi_{\ell}}(\theta_x,\theta_t))\boldsymbol{\psi}^n_{j}(\theta_x,\theta_t)\\
    & = \boldsymbol{\mathcal{S}}_{\tau_{\ell},\xi_{\ell}}(\theta_x,\theta_t)\boldsymbol{\psi}^n_{j}(\theta_x,\theta_t),
  \end{align*}
  with $\hat{\mathbf{A}}_{\tau_{\ell},\xi_{\ell}}(\theta_x) := \mathbf{K}_{\tau_{\ell}} + \frac{a}{\Delta x} (-e^{-\mathrm{i}\theta_x}+1)\mathbf{M}_{\tau_{\ell}}$ derived as in the previous proof. Moreover,
  \begin{align*}
   (\hat{\mathbf{A}}_{\tau_{\ell},\xi_{\ell}}(\theta_x))^{-1}\boldsymbol{\mathcal{L}}_{\tau_{\ell},\xi_{\ell}}(\theta_x,\theta_t) & = (\mathbf{K}_{\tau_{\ell}} + \frac{a}{\Delta x} (-e^{-\mathrm{i}\theta_x}+1)\mathbf{M}_{\tau_{\ell}})^{-1}\\
   & ~~~~ (-e^{-\mathrm{i}\theta_t}\mathbf{C}_{\tau_{\ell}}+\mathbf{K}_{\tau_{\ell}}+\frac{a}{\Delta x} (-e^{-\mathrm{i}\theta_x}+1)\mathbf{M}_{\tau_{\ell}})\\
   & = \mathbf{I}_{N_t} - e^{-\mathrm{i}\theta_t}(\mathbf{K}_{\tau_{\ell}}+\frac{a}{\Delta x}(-e^{-\mathrm{i}\theta_x}+1)\mathbf{M}_{\tau_{\ell}})^{-1}\mathbf{C}_{\tau_{\ell}}.
  \end{align*}
  Thus,
  \begin{align*}
    \boldsymbol{\mathcal{S}}_{\tau_{\ell},\xi_{\ell}}(\theta_x,\theta_t) & = \mathbf{I}_{N_t} - \omega_t(\mathbf{I}_{N_t} - e^{-\mathrm{i}\theta_t}(\mathbf{K}_{\tau_{\ell}}+\frac{a}{\Delta x}(-e^{-\mathrm{i}\theta_x}+1)\mathbf{M}_{\tau_{\ell}})^{-1}\mathbf{C}_{\tau_{\ell}})\\
    & = (1-\omega_t)\mathbf{I}_{N_t} + \omega_t e^{-\mathrm{i}\theta_t}(\mathbf{K}_{\tau_{\ell}}+\frac{a}{\Delta x}(-e^{-\mathrm{i}\theta_x}+1)\mathbf{M}_{\tau_{\ell}})^{-1}\mathbf{C}_{\tau_{\ell}} \in \mathbb{C}^{N_t \times N_t}.
  \end{align*}
 \end{proof}
 
 With Theorems \ref{FourierTransform} and \ref{SumLowFreq} and Lemma \ref{FourierSymbolL} we get for the system matrix $\underline{\mathbf{L}}_{\tau_{\ell},\xi_{\ell}}$ and $(\theta_x,\theta_t)\in \Theta_{{\ell}_x,{\ell}_t}^{low,f}$ the following mapping property:
 \begin{align}\label{MappingSystemMatrix}
 \begin{split}
  \underline{\mathbf{L}}_{\tau_{\ell},\xi_{\ell}}: & \mathcal{E}_{{\ell}_x,{\ell}_t}(\theta_x,\theta_t) \to \mathcal{E}_{{\ell}_x,{\ell}_t}(\theta_x,\theta_t),\\
  &\begin{pmatrix}
   \mathbf{U}_1\\
   \mathbf{U}_2\\
   \mathbf{U}_3\\
   \mathbf{U}_4
  \end{pmatrix}
  \to
  \begin{pmatrix}
   \boldsymbol{\mathcal{L}}_{\tau_{\ell},\xi_{\ell}}(\theta_x,\theta_t)\mathbf{U}_1\\
   \boldsymbol{\mathcal{L}}_{\tau_{\ell},\xi_{\ell}}(\gamma(\theta_x),\theta_t)\mathbf{U}_2\\
   \boldsymbol{\mathcal{L}}_{\tau_{\ell},\xi_{\ell}}(\theta_x,\gamma(\theta_t))\mathbf{U}_3\\
   \boldsymbol{\mathcal{L}}_{\tau_{\ell},\xi_{\ell}}(\gamma(\theta_x),\gamma(\theta_t))\mathbf{U}_4\\
  \end{pmatrix}
  =: \widetilde{\boldsymbol{\mathcal{L}}}_{\tau_{\ell},\xi_{\ell}}(\theta_x,\theta_t)
  \begin{pmatrix}
   \mathbf{U}_1\\
   \mathbf{U}_2\\
   \mathbf{U}_3\\
   \mathbf{U}_4
  \end{pmatrix},
 \end{split}
 \end{align}
 with a block diagonal matrix $\widetilde{\boldsymbol{\mathcal{L}}}_{\tau_{\ell},\xi_{\ell}}(\theta_x,\theta_t) \in \mathbb{C}^{4N_t \times 4N_t}$, $\boldsymbol{\mathcal{L}}_{\tau_{\ell},\xi_{\ell}} \in \mathbb{C}^{N_t \times N_t}$ as defined in Lemma \ref{FourierSymbolL} and the space of low frequencies $\mathcal{E}_{{\ell}_x,{\ell}_t}$ as defined in \ref{SpaceLowFrequencies}. Accordingly, we obtain with Lemma \ref{FourierSymbolS} for the smoother $ \underline{\mathbf{S}}_{\tau_{\ell},\xi_{\ell}}$ and $(\theta_x,\theta_t)\in\Theta_{{\ell}_x,{\ell}_t}^{low,f}$
 \begin{align}\label{MappingSmoother}
 \begin{split}
  \underline{\mathbf{S}}_{\tau_{\ell},\xi_{\ell}}: & \mathcal{E}_{{\ell}_x,{\ell}_t}(\theta_x,\theta_t) \to \mathcal{E}_{{\ell}_x,{\ell}_t}(\theta_x,\theta_t),\\
  & \begin{pmatrix}
   \mathbf{U}_1\\
   \mathbf{U}_2\\
   \mathbf{U}_3\\
   \mathbf{U}_4
  \end{pmatrix}
  \to
  \begin{pmatrix}
   \boldsymbol{\mathcal{S}}_{\tau_{\ell},\xi_{\ell}}(\theta_x,\theta_t)\mathbf{U}_1\\
   \boldsymbol{\mathcal{S}}_{\tau_{\ell},\xi_{\ell}}(\gamma(\theta_x),\theta_t)\mathbf{U}_2\\
   \boldsymbol{\mathcal{S}}_{\tau_{\ell},\xi_{\ell}}(\theta_x,\gamma(\theta_t))\mathbf{U}_3\\
   \boldsymbol{\mathcal{S}}_{\tau_{\ell},\xi_{\ell}}(\gamma(\theta_x),\gamma(\theta_t))\mathbf{U}_4\\
  \end{pmatrix}
   =: \widetilde{\boldsymbol{\mathcal{S}}}_{\tau_{\ell},\xi_{\ell}}(\theta_x,\theta_t)
  \begin{pmatrix}
   \mathbf{U}_1\\
   \mathbf{U}_2\\
   \mathbf{U}_3\\
   \mathbf{U}_4
  \end{pmatrix},
 \end{split}
 \end{align}
 with a block diagonal matrix $\widetilde{\boldsymbol{\mathcal{S}}}_{\tau_{\ell},\xi_{\ell}}(\theta_x,\theta_t) \in \mathbb{C}^{4N_t \times 4N_t}$ and $\boldsymbol{\mathcal{S}}_{\tau_{\ell},\xi_{\ell}} \in \mathbb{C}^{N_t \times N_t}$ as defined in Lemma \ref{FourierSymbolS}.
 
 Next, we derive the Fourier symbols of the restriction and prolongation operators.
 
 \begin{lemma}[Fourier symbols of spatial prolongation and restriction]\label{FourierSymbPandRspace}
  Consider the spatial restriction and prolongation operators $\mathbf{R}_{\ell_x-1}^{\ell_x} \in \mathbb{C}^{N_{{\ell}_x-1} \times N_{{\ell}_x}}$ and $\mathbf{P}^{\ell_x-1}_{\ell_x}  \in \mathbb{C}^{N_{{\ell}_x} \times N_{{\ell}_x-1}}$ defined in \eqref{RandPspace}. Let $\boldsymbol{\varphi}^{\ell_x}(\theta_x)\in \mathbb{C}^{N_{{\ell}_x}}$ be a fine Fourier mode and $\boldsymbol{\varphi}^{\ell_x-1}(2\theta_x)\in \mathbb{C}^{N_{{\ell}_x-1}}$ a coarse Fourier mode for $\theta_x \in \Theta_{{\ell}_x}^{low}$. Then for $\mathcal{R}_{\ell_x-1}^{\ell_x}(\theta_x) := \frac{1}{2}(e^{-\mathrm{i}\theta_x}+1)$ it holds
  \begin{align*}
   (\mathbf{R}_{\ell_x-1}^{\ell_x}\boldsymbol{\varphi}^{\ell_x}(\theta_x))_j = \mathcal{R}_{\ell_x-1}^{\ell_x}(\theta_x)\varphi_j^{\ell_x-1}(2\theta_x), ~ j=1,\dots,N_{{\ell}_x-1},
  \end{align*}
 and we call $\mathcal{R}_{\ell_x-1}^{\ell_x}(\theta_x) \in \mathbb{C}$ the Fourier symbol of the restriction operator in space.\\
 For $\mathcal{P}^{\ell_x-1}_{\ell_x}(\theta_x) := \frac{1}{2}(e^{\mathrm{i}\theta_x}+1)$ it holds
  \begin{align*}
  (\mathbf{P}^{\ell_x-1}_{\ell_x}\boldsymbol{\varphi}^{\ell_x-1}(2\theta_x))_i = \mathcal{P}^{\ell_x-1}_{\ell_x}(\theta_x)\varphi_i^{\ell_x}(\theta_x) + \mathcal{P}^{\ell_x-1}_{\ell_x}(\gamma(\theta_x))\varphi_i^{\ell_x}(\gamma(\theta_x)), ~ i=1,\dots,N_{{\ell}_x}
  \end{align*}
 and we call $\mathcal{P}^{\ell_x-1}_{\ell_x}(\theta_x) \in \mathbb{C}$ the Fourier symbol of the prolongation operator in space.
 \end{lemma}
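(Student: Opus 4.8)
The plan is to compute directly the action of the two sparse matrices in \eqref{RandPspace} on the relevant Fourier modes, read off the scalar prefactors as the symbols, and — for the prolongation — invoke the aliasing identity $\varphi_i^{\ell_x}(\gamma(\theta_x)) = (-1)^i\varphi_i^{\ell_x}(\theta_x)$ to account for the fact that piecewise-constant injection mixes two harmonics. Throughout I would use $N_{\ell_x-1}=N_{\ell_x}/2$ and the convention that coarse index $j$ is the agglomeration of fine indices $2j-1$ and $2j$.

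First, for the restriction: by the block structure of $\mathbf{R}_{\ell_x-1}^{\ell_x}$, its $j$-th row carries the entry $\tfrac12$ exactly in columns $2j-1$ and $2j$, so that
\[
 (\mathbf{R}_{\ell_x-1}^{\ell_x}\boldsymbol{\varphi}^{\ell_x}(\theta_x))_j = \tfrac12\bigl(e^{\mathrm{i}(2j-1)\theta_x} + e^{\mathrm{i}2j\theta_x}\bigr) = \tfrac12\bigl(e^{-\mathrm{i}\theta_x}+1\bigr)\,e^{\mathrm{i}j(2\theta_x)}.
\]
Since $\varphi_j^{\ell_x-1}(2\theta_x) = e^{\mathrm{i}j(2\theta_x)}$, the prefactor is precisely $\mathcal{R}_{\ell_x-1}^{\ell_x}(\theta_x) = \tfrac12(e^{-\mathrm{i}\theta_x}+1)$, which gives the first claim.

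Next, for the prolongation $\mathbf{P}_{\ell_x}^{\ell_x-1} = (2\mathbf{R}_{\ell_x-1}^{\ell_x})^T$: this matrix copies coarse entry $j$ into the two fine entries $2j-1$ and $2j$, i.e.\ $(\mathbf{P}_{\ell_x}^{\ell_x-1}\boldsymbol{\varphi}^{\ell_x-1}(2\theta_x))_i = \varphi_{\lceil i/2\rceil}^{\ell_x-1}(2\theta_x)$. Evaluating separately for even $i=2j$ and odd $i=2j-1$ yields $\varphi_i^{\ell_x}(\theta_x)$ in the first case and $e^{\mathrm{i}\theta_x}\varphi_i^{\ell_x}(\theta_x)$ in the second. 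To recast this index-dependent expression as a fixed linear combination of the fine modes $\varphi_i^{\ell_x}(\theta_x)$ and $\varphi_i^{\ell_x}(\gamma(\theta_x))$, I would use that $\gamma(\theta_x)=\theta_x\pm\pi$ forces $\varphi_i^{\ell_x}(\gamma(\theta_x)) = (-1)^i\varphi_i^{\ell_x}(\theta_x)$; hence $\tfrac12(\varphi_i^{\ell_x}(\theta_x)+\varphi_i^{\ell_x}(\gamma(\theta_x)))$ equals $\varphi_i^{\ell_x}(\theta_x)$ for even $i$ and vanishes for odd $i$, while $\tfrac12(\varphi_i^{\ell_x}(\theta_x)-\varphi_i^{\ell_x}(\gamma(\theta_x)))$ does the opposite. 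Combining the two cases with weights $1$ and $e^{\mathrm{i}\theta_x}$ gives
\[
 (\mathbf{P}_{\ell_x}^{\ell_x-1}\boldsymbol{\varphi}^{\ell_x-1}(2\theta_x))_i = \tfrac12(1+e^{\mathrm{i}\theta_x})\,\varphi_i^{\ell_x}(\theta_x) + \tfrac12(1-e^{\mathrm{i}\theta_x})\,\varphi_i^{\ell_x}(\gamma(\theta_x)),
\]
and since $\mathcal{P}_{\ell_x}^{\ell_x-1}(\gamma(\theta_x)) = \tfrac12(e^{\mathrm{i}\gamma(\theta_x)}+1) = \tfrac12(1-e^{\mathrm{i}\theta_x})$, this is exactly the second claim.

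The only genuinely delicate point is the bookkeeping: one must be consistent about which fine indices are agglomerated into coarse index $j$, so the exponent in $e^{\mathrm{i}j(2\theta_x)}$ comes out right, and one must recognize that, unlike the restriction, the prolongation cannot be diagonalized by a single Fourier mode — the even/odd alternation inherent in constant injection is what forces the appearance of the shifted frequency $\gamma(\theta_x)$, and the identity $\varphi_i^{\ell_x}(\gamma(\theta_x)) = (-1)^i\varphi_i^{\ell_x}(\theta_x)$ is the one tool that turns that alternation into a clean two-harmonic formula. Everything else is a routine rearrangement of complex exponentials.
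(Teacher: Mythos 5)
Your proposal is correct and follows essentially the same route as the paper's proof: direct computation of the matrix action, the identification $\varphi_{2j}^{\ell_x}(\theta_x)=\varphi_j^{\ell_x-1}(2\theta_x)$, the even/odd case split for the prolongation, and the aliasing identity $\varphi_i^{\ell_x}(\gamma(\theta_x))=(-1)^i\varphi_i^{\ell_x}(\theta_x)$ to combine the two harmonics. No gaps.
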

 
 \begin{proof}
  For the restriction operator we get
  \begin{align*}
   (\mathbf{R}_{\ell_x-1}^{\ell_x}\boldsymbol{\varphi}^{\ell_x}(\theta_x))_j &= \frac{1}{2} (\varphi^{\ell_x}_{2j-1}(\theta_x)+\varphi_{2j}^{\ell_x}(\theta_x)) 
   =  \frac{1}{2}(e^{-\mathrm{i}\theta_x}+1)\varphi_{2j}^{\ell_x}(\theta_x)\\
   & =  \frac{1}{2}(e^{-\mathrm{i}\theta_x}+1)\varphi_{j}^{\ell_x-1}(2\theta_x)
   =  \mathcal{R}_{\ell_x-1}^{\ell_x}(\theta_x)\varphi_j^{\ell_x-1}(2\theta_x),~j=1,\dots,N_{{\ell}_x-1},
  \end{align*}
  using the shifting Lemma \ref{ShiftingEquality} and $\varphi_{2j}^{\ell_x}(\theta_x)=\varphi_{j}^{\ell_x-1}(2\theta_x)$.\\
  For the prolongation operator it holds
  \begin{align*}
   (\mathbf{P}^{\ell_x-1}_{\ell_x}\boldsymbol{\varphi}^{\ell_x-1}(2\theta_x))_{2j-1} = \varphi^{\ell_x-1}_{j}(2\theta_x) = \varphi_{2j}^{\ell_x}(\theta_x) = e^{\mathrm{i}\theta_x}\varphi_{2j-1}^{\ell_x}(\theta_x),
  \end{align*}
  and
  \begin{align*}
   (\mathbf{P}^{\ell_x-1}_{\ell_x}\boldsymbol{\varphi}^{\ell_x-1}(2\theta_x))_{2j} = \varphi^{\ell_x-1}_{j}(2\theta_x) = \varphi_{2j}^{\ell_x}(\theta_x),
  \end{align*}
 for $j=1,\dots,N_{{\ell}_x-1}$, with the same arguments as before. Then
 \begin{align*}
  (\mathbf{P}^{\ell_x-1}_{\ell_x}\boldsymbol{\varphi}^{\ell_x-1}(2\theta_x))_{j} =
  \begin{cases}
   e^{\mathrm{i}\theta_x}\varphi_{j}^{\ell_x}(\theta_x),& j \text{ odd},\\
   \varphi_{j}^{\ell_x}(\theta_x),&j \text{ even},
  \end{cases}
 \end{align*}
 for $j=1,\dots,N_{{\ell}_x}$. Moreover,
 \begin{align*}
  \varphi_j^{\ell_x}(\gamma(\theta_x)) = e^{\mathrm{i}j\gamma(\theta_x)} =
  \begin{cases}
   e^{\mathrm{i}j\pi}e^{\mathrm{i}j\theta_x} ,& \theta_x < 0,\\
   e^{-\mathrm{i}j\pi}e^{\mathrm{i}j\theta_x} ,& \theta_x \ge 0,
  \end{cases}
  = 
  \begin{cases}
   -\varphi_j^{\ell_x}(\theta_x),& j \text{ odd},\\
   \varphi_j^{\ell_x}(\theta_x),& j \text{ even},
  \end{cases}
 \end{align*}
 and
 \begin{align*}
  \mathcal{P}^{\ell_x-1}_{\ell_x}(\gamma(\theta_x)) = \frac{1}{2}(e^{\mathrm{i}\gamma(\theta_x)}+1) =  \frac{1}{2}(-e^{\mathrm{i}\theta_x}+1),
 \end{align*}
 for $j=1,\dots,N_{{\ell}_x}$. This implies
 \begin{align*}
   & \mathcal{P}^{\ell_x-1}_{\ell_x} (\theta_x)\varphi_j^{\ell_x}(\theta_x) + \mathcal{P}^{\ell_x-1}_{\ell_x}(\gamma(\theta_x))\varphi_j^{\ell_x}(\gamma(\theta_x))\\
   & = \frac{1}{2}(e^{\mathrm{i}\theta_x}+1)\varphi_j^{\ell_x}(\theta_x) + \frac{1}{2}(-e^{\mathrm{i}\theta_x}+1)
   \begin{cases}
   -\varphi_j^{\ell_x}(\theta_x),& j \text{ odd},\\
   \varphi_j^{\ell_x}(\theta_x),& j \text{ even}
   \end{cases}\\
   & = 
   \begin{cases}
   e^{\mathrm{i}\theta_x}\varphi_{j}^{\ell_x}(\theta_x),& j \text{ odd},\\
   \varphi_{j}^{\ell_x}(\theta_x),& j \text{ even},
  \end{cases}
  = (\mathbf{P}^{\ell_x-1}_{\ell_x}\boldsymbol{\varphi}^{\ell_x-1}(2\theta_x))_{j},
 \end{align*}
 for $j=1,\dots,N_{\ell_x}$.
 \end{proof}
 
 The following five Lemmata from \cite{Gander2016} give us the Fourier symbols of the restriction and prolongation operators for the different coarsening strategies.
 
 \begin{lemma}[Fourier symbols for temporal prolongation and restriction]\label{FourierSymbPandRtime}
  Consider temporal restriction and prolongation operators $\mathbf{R}^{\ell_t}_{\ell_t-1} \in \mathbb{C}^{N_t N_{\ell_t-1} \times N_t N_{\ell_t}}$ and $\mathbf{P}^{\ell_t-1}_{\ell_t}  \in \mathbb{C}^{N_t N_{\ell_t} \times N_t N_{\ell_t-1}}$ as defined in \eqref{RandPtime}. Let $\boldsymbol{\Phi}^{\ell_t}(\theta_t)\in \mathbb{C}^{N_t N_{\ell_t}}$ be a fine Fourier mode and $\boldsymbol{\Phi}^{\ell_t-1}(\theta_t)\in \mathbb{C}^{N_t N_{\ell_t-1}}$ a coarse Fourier mode for $\theta_t \in \Theta_{\ell_t}^{low}$ with elements
  \begin{align*}
   \Phi^{n,\ell_t}_{l}(\theta_t) &:= \varphi_n(\theta_t),~~~ l=1,\dots,N_t,~n=1,\dots,N_{\ell_t},\\
   \Phi^{n,\ell_t-1}_{l}(\theta_t) &:= \varphi_{n}(\theta_t),~~~ l=1,\dots,N_t,~n=1,\dots,N_{\ell_t-1}.
  \end{align*}
  Then for $\boldsymbol{\mathcal{R}}^{\ell_t}_{\ell_t-1}(\theta_t):= e^{-\mathrm{i}\theta_t}\mathbf{R}_1 + \mathbf{R}_2 \in \mathbb{C}^{N_t \times N_t}$, with $\mathbf{R}_1$ and $\mathbf{R}_2$ defined in \eqref{RandPtime}, it holds
  \begin{align*}
   (\mathbf{R}^{\ell_t}_{\ell_t-1}\boldsymbol{\Phi}^{\ell_t}(\theta_t))^{n} = \boldsymbol{\mathcal{R}}^{\ell_t}_{\ell_t-1}(\theta_t)\boldsymbol{\Phi}^{n,\ell_t-1}(2\theta_t), ~ n=1,\dots,N_{\ell_t-1},
  \end{align*}
 and we call $\boldsymbol{\mathcal{R}}^{\ell_t}_{\ell_t-1}(\theta_t) \in \mathbb{C}^{N_t \times N_t}$ the  Fourier symbol for the restriction operator in time.\\
 Moreover, for $\boldsymbol{\mathcal{P}}^{\ell_t-1}_{\ell_t}(\theta_t) := \frac{1}{2}(e^{\mathrm{i}\theta_t}\mathbf{R}_1^T+\mathbf{R}_2^T) \in \mathbb{C}^{N_t \times N_t}$ it holds
  \begin{align*}
  (\mathbf{P}^{\ell_t-1}_{\ell_t}\boldsymbol{\Phi}^{\ell_t-1}(2\theta_t))^n = \boldsymbol{\mathcal{P}}^{\ell_t-1}_{\ell_t}(\theta_t)\boldsymbol{\Phi}^{n,\ell_t}(\theta_t) + \boldsymbol{\mathcal{P}}^{\ell_t-1}_{\ell_t}(\gamma(\theta_t))\boldsymbol{\Phi}^{n,\ell_t}(\gamma(\theta_t)),~ n=1,\dots,N_{\ell_t},
  \end{align*}
 and we call $\boldsymbol{\mathcal{P}}^{\ell_t-1}_{\ell_t}(\theta_t)  \in \mathbb{C}^{N_t \times N_t}$ the Fourier symbol for the prolongation in time.
 \end{lemma}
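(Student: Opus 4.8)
The plan is to mimic the proof of Lemma~\ref{FourierSymbPandRspace}, carrying the $N_t \times N_t$ block structure through the same steps that there involved only scalars. Since the Fourier modes in this lemma depend only on $\theta_t$, it is really the purely temporal analogue of the spatial statement, but with the roles of the scalars $\frac12(e^{\pm\mathrm{i}\theta_x}+1)$ now played by the matrices $\mathbf{R}_1$, $\mathbf{R}_2$ and their transposes.

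For the restriction, I would first read off from the block pattern of $\mathbf{R}^{\ell_t}_{\ell_t-1}$ in \eqref{RandPtime} that its $n$-th block row couples exactly the two fine-grid blocks with indices $2n-1$ and $2n$, so that $(\mathbf{R}^{\ell_t}_{\ell_t-1}\boldsymbol{\Phi}^{\ell_t}(\theta_t))^n = \mathbf{R}_1\boldsymbol{\Phi}^{2n-1,\ell_t}(\theta_t) + \mathbf{R}_2\boldsymbol{\Phi}^{2n,\ell_t}(\theta_t)$. Since every entry of $\boldsymbol{\Phi}^{m,\ell_t}(\theta_t)$ equals $\varphi_m(\theta_t) = e^{\mathrm{i}m\theta_t}$, the temporal shifting equality of Lemma~\ref{ShiftingEquality} (equivalently, the identity $\varphi_{2n-1}(\theta_t) = e^{-\mathrm{i}\theta_t}\varphi_{2n}(\theta_t)$) gives $\boldsymbol{\Phi}^{2n-1,\ell_t}(\theta_t) = e^{-\mathrm{i}\theta_t}\boldsymbol{\Phi}^{2n,\ell_t}(\theta_t)$, while $\varphi_{2n}(\theta_t) = \varphi_n(2\theta_t)$ identifies $\boldsymbol{\Phi}^{2n,\ell_t}(\theta_t)$ with the coarse mode $\boldsymbol{\Phi}^{n,\ell_t-1}(2\theta_t)$. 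Collecting terms yields $(e^{-\mathrm{i}\theta_t}\mathbf{R}_1 + \mathbf{R}_2)\boldsymbol{\Phi}^{n,\ell_t-1}(2\theta_t)$, which is precisely $\boldsymbol{\mathcal{R}}^{\ell_t}_{\ell_t-1}(\theta_t)\boldsymbol{\Phi}^{n,\ell_t-1}(2\theta_t)$.

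For the prolongation $\mathbf{P}^{\ell_t-1}_{\ell_t} = (\mathbf{R}^{\ell_t}_{\ell_t-1})^T$, the transposed pattern shows that applying it to a coarse mode produces, in block $2n-1$, $\mathbf{R}_1^T\boldsymbol{\Phi}^{n,\ell_t-1}(2\theta_t)$, and in block $2n$, $\mathbf{R}_2^T\boldsymbol{\Phi}^{n,\ell_t-1}(2\theta_t)$. Rewriting $\boldsymbol{\Phi}^{n,\ell_t-1}(2\theta_t) = \boldsymbol{\Phi}^{2n,\ell_t}(\theta_t)$ and, for the odd blocks, $\boldsymbol{\Phi}^{2n,\ell_t}(\theta_t) = e^{\mathrm{i}\theta_t}\boldsymbol{\Phi}^{2n-1,\ell_t}(\theta_t)$, I obtain that the $m$-th block of $\mathbf{P}^{\ell_t-1}_{\ell_t}\boldsymbol{\Phi}^{\ell_t-1}(2\theta_t)$ equals $e^{\mathrm{i}\theta_t}\mathbf{R}_1^T\boldsymbol{\Phi}^{m,\ell_t}(\theta_t)$ for $m$ odd and $\mathbf{R}_2^T\boldsymbol{\Phi}^{m,\ell_t}(\theta_t)$ for $m$ even. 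To reconcile this with the claimed two-term formula I would then use, exactly as in Lemma~\ref{FourierSymbPandRspace}, that $e^{\mathrm{i}\gamma(\theta_t)} = -e^{\mathrm{i}\theta_t}$ --- hence $\boldsymbol{\mathcal{P}}^{\ell_t-1}_{\ell_t}(\gamma(\theta_t)) = \frac12(-e^{\mathrm{i}\theta_t}\mathbf{R}_1^T + \mathbf{R}_2^T)$ --- together with $\varphi_m(\gamma(\theta_t)) = (-1)^m\varphi_m(\theta_t)$, so that $\boldsymbol{\Phi}^{m,\ell_t}(\gamma(\theta_t)) = \pm\boldsymbol{\Phi}^{m,\ell_t}(\theta_t)$ according to the parity of $m$. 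Substituting these into $\boldsymbol{\mathcal{P}}^{\ell_t-1}_{\ell_t}(\theta_t)\boldsymbol{\Phi}^{m,\ell_t}(\theta_t) + \boldsymbol{\mathcal{P}}^{\ell_t-1}_{\ell_t}(\gamma(\theta_t))\boldsymbol{\Phi}^{m,\ell_t}(\gamma(\theta_t))$ and splitting by parity, the $\mathbf{R}_2^T$ contributions cancel when $m$ is odd and the $e^{\mathrm{i}\theta_t}\mathbf{R}_1^T$ contributions cancel when $m$ is even, reproducing the case distinction above.

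I expect no real obstacle here: this is the matrix-valued analogue of the already-established spatial case, and it reduces entirely to bookkeeping of the block indices $2n-1$, $2n$ plus two applications of the shifting equality and the elementary frequency identities. The only points needing a little care are keeping the matrices $\mathbf{R}_1^T$, $\mathbf{R}_2^T$ on the correct side (they act from the left on the $\boldsymbol{\Phi}$ vectors, and do not commute), and noting that the shifting equality of Lemma~\ref{ShiftingEquality} indeed applies to the $\boldsymbol{\Phi}^{\cdot,\ell_t}$, since these are the purely temporal Fourier modes corresponding to the trivial coefficient matrix $\mathbf{U} = \mathbf{I}_{N_t}$.
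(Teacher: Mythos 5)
Your proof is correct: the paper itself omits a proof of this lemma (it is quoted from the reference [Gander 2016]), but your argument is precisely the block-matrix analogue of the paper's own proof of Lemma~\ref{FourierSymbPandRspace}, with the correct bookkeeping of the indices $2n-1$, $2n$, the shifting equality, and the parity identities $e^{\mathrm{i}\gamma(\theta_t)}=-e^{\mathrm{i}\theta_t}$ and $\varphi_m(\gamma(\theta_t))=(-1)^m\varphi_m(\theta_t)$. No gaps.
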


With these results we can get the mapping properties for the semi-restriction and semi-prolongation operators.
 
 \begin{lemma}[Fourier symbol for restriction, semi-coarsening]\label{FourierSymbRsemi}
 The following mapping property holds for the restriction operator $(\underline{\mathbf{R}}_{\ell-1}^{\ell})^s$:
  \begin{align*}
   (\underline{\mathbf{R}}_{\ell-1}^{\ell})^s: \mathcal{E}_{\ell_x,\ell_t}(\theta_x,\theta_t) &\to \Psi_{\ell_x,\ell_t-1}(\theta_x,2\theta_t),\\
   \begin{pmatrix}
    \mathbf{U}_1\\
    \mathbf{U}_2\\
    \mathbf{U}_3\\
    \mathbf{U}_4
   \end{pmatrix}
  &\mapsto
 ({\widetilde{\boldsymbol{\mathcal{R}}}}_{\ell-1}^{\ell})^s(\theta_t)
   \begin{pmatrix}
    \mathbf{U}_1\\
    \mathbf{U}_2\\
    \mathbf{U}_3\\
    \mathbf{U}_4
   \end{pmatrix},
  \end{align*}
  with
 \begin{align*}
  ({\widetilde{\boldsymbol{\mathcal{R}}}}_{\ell-1}^{\ell})^s(\theta_t) :=
 \begin{pmatrix}
  \boldsymbol{\mathcal{R}}_{\ell_t-1}^{\ell_t}(\theta_t) & 0 & \boldsymbol{\mathcal{R}}_{\ell_t-1}^{\ell_t}(\gamma(\theta_t)) & 0 \\
  0 & \boldsymbol{\mathcal{R}}_{\ell_t-1}^{\ell_t}(\theta_t) & 0 & \boldsymbol{\mathcal{R}}_{\ell_t-1}^{\ell_t}(\gamma(\theta_t))
 \end{pmatrix}
 \in \mathbb{C}^{2N_t \times 4N_t}
 \end{align*}
 and the Fourier symbol $\boldsymbol{\mathcal{R}}_{\ell_t-1}^{\ell_t}(\theta_t) \in \mathbb{C}^{N_t \times N_t}$ as defined in Lemma \ref{FourierSymbPandRtime}.
 \end{lemma}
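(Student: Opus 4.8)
The plan is to apply the tensor structure of the semi-coarsening restriction $(\underline{\mathbf{R}}_{\ell-1}^{\ell})^s = \mathbf{I}_{N_{\ell_x}} \otimes \mathbf{R}^{\ell_t}_{\ell_t-1}$ directly to a generic element of the harmonic space $\mathcal{E}_{\ell_x,\ell_t}(\theta_x,\theta_t)$ and to read off the coefficient matrices of the resulting coarse modes. First I would fix $(\theta_x,\theta_t)\in\Theta_{\ell_x,\ell_t}^{low,f}$ and take $\underline{\boldsymbol{\psi}}\in\mathcal{E}_{\ell_x,\ell_t}(\theta_x,\theta_t)$, so that by Definition \ref{SpaceLowFrequencies} its components decompose as $\boldsymbol{\psi}^n_j = \mathbf{U}_1\boldsymbol{\Phi}^n_j(\theta_x,\theta_t) + \mathbf{U}_2\boldsymbol{\Phi}^n_j(\gamma(\theta_x),\theta_t) + \mathbf{U}_3\boldsymbol{\Phi}^n_j(\theta_x,\gamma(\theta_t)) + \mathbf{U}_4\boldsymbol{\Phi}^n_j(\gamma(\theta_x),\gamma(\theta_t))$. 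Since $\Phi^n_{j,l}(\theta_x,\theta_t)=\varphi_n(\theta_t)\varphi_j(\theta_x)$ factorizes into a temporal and a spatial part, each of the four building blocks is of tensor form (spatial mode)$\,\otimes\,$(temporal mode), which lets me treat the spatial identity factor and the temporal restriction factor separately.

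Next I would apply $\mathbf{I}_{N_{\ell_x}}\otimes\mathbf{R}^{\ell_t}_{\ell_t-1}$ to each of the four terms by linearity. The spatial identity leaves $\varphi_j(\theta_x)$ and $\varphi_j(\gamma(\theta_x))$ untouched, while Lemma \ref{FourierSymbPandRtime} gives $(\mathbf{R}^{\ell_t}_{\ell_t-1}\boldsymbol{\Phi}^{\ell_t}(\theta_t))^n = \boldsymbol{\mathcal{R}}^{\ell_t}_{\ell_t-1}(\theta_t)\boldsymbol{\Phi}^{n,\ell_t-1}(2\theta_t)$ and, with $\theta_t$ replaced by $\gamma(\theta_t)$, $(\mathbf{R}^{\ell_t}_{\ell_t-1}\boldsymbol{\Phi}^{\ell_t}(\gamma(\theta_t)))^n = \boldsymbol{\mathcal{R}}^{\ell_t}_{\ell_t-1}(\gamma(\theta_t))\boldsymbol{\Phi}^{n,\ell_t-1}(2\gamma(\theta_t))$. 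The one point that needs care is the aliasing identity $2\gamma(\theta_t)\equiv 2\theta_t\pmod{2\pi}$, whence $\varphi_n(2\gamma(\theta_t))=\varphi_n(2\theta_t)$ and therefore $\boldsymbol{\Phi}^{n,\ell_t-1}(2\gamma(\theta_t))=\boldsymbol{\Phi}^{n,\ell_t-1}(2\theta_t)$; this is exactly what makes the two fine temporal frequencies $\theta_t$ and $\gamma(\theta_t)$ collapse onto the single coarse frequency $2\theta_t$ and reduces the $4N_t$-dimensional input to a $2N_t$-dimensional output.

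Finally, collecting the four contributions by their (unchanged) spatial frequency $\theta_x$ or $\gamma(\theta_x)$, I would obtain that $((\underline{\mathbf{R}}_{\ell-1}^{\ell})^s\underline{\boldsymbol{\psi}})^n_j$ equals $\mathbf{V}_1\boldsymbol{\Phi}^{n,\ell_x,\ell_t-1}_j(\theta_x,2\theta_t) + \mathbf{V}_2\boldsymbol{\Phi}^{n,\ell_x,\ell_t-1}_j(\gamma(\theta_x),2\theta_t)$ with $\mathbf{V}_1 = \boldsymbol{\mathcal{R}}^{\ell_t}_{\ell_t-1}(\theta_t)\mathbf{U}_1 + \boldsymbol{\mathcal{R}}^{\ell_t}_{\ell_t-1}(\gamma(\theta_t))\mathbf{U}_3$ and $\mathbf{V}_2 = \boldsymbol{\mathcal{R}}^{\ell_t}_{\ell_t-1}(\theta_t)\mathbf{U}_2 + \boldsymbol{\mathcal{R}}^{\ell_t}_{\ell_t-1}(\gamma(\theta_t))\mathbf{U}_4$, which by Definition \ref{FourierSymbSpaceSemi} exhibits the image in $\Psi_{\ell_x,\ell_t-1}(\theta_x,2\theta_t)$ and identifies $(\mathbf{V}_1,\mathbf{V}_2)^T = ({\widetilde{\boldsymbol{\mathcal{R}}}}_{\ell-1}^{\ell})^s(\theta_t)(\mathbf{U}_1,\mathbf{U}_2,\mathbf{U}_3,\mathbf{U}_4)^T$ with the block matrix claimed in the statement. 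The main obstacle is purely bookkeeping: keeping the spatial index $j$ consistent across the tensor factors and invoking the temporal aliasing identity in the right places; no analytic difficulty arises since everything reduces to Lemma \ref{FourierSymbPandRtime} together with linearity.
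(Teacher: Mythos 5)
Your proposal is correct. The paper itself gives no proof of this lemma (it is quoted, together with the neighbouring prolongation/restriction lemmata, from the reference by Gander and Neum\"uller), but your argument is exactly the standard one that establishes it: decompose an element of $\mathcal{E}_{\ell_x,\ell_t}(\theta_x,\theta_t)$ into its four harmonics, let the spatial identity factor pass through $\varphi_j(\theta_x)$ and $\varphi_j(\gamma(\theta_x))$ unchanged, apply Lemma \ref{FourierSymbPandRtime} to the temporal factors, and invoke the aliasing identity $2\gamma(\theta_t)\equiv 2\theta_t \pmod{2\pi}$ to collapse the pairs $(\mathbf{U}_1,\mathbf{U}_3)$ and $(\mathbf{U}_2,\mathbf{U}_4)$ onto the two coarse harmonics; the resulting coefficients $\mathbf{V}_1,\mathbf{V}_2$ reproduce precisely the claimed $2N_t\times 4N_t$ block matrix.
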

 
 \begin{lemma}[Fourier symbol of prolongation, semi-coarsening]\label{FourierSymbPsemi}
  The following mapping property holds for the prolongation operator $(\underline{\mathbf{P}}_{\ell}^{\ell-1})^s$:
  \begin{align*}
   (\underline{\mathbf{P}}_{\ell}^{\ell-1})^s: \Psi_{\ell_x,\ell_t-1}(\theta_x,2\theta_t) &\to \mathcal{E}_{\ell_x,\ell_t}(\theta_x,\theta_t),\\
   \begin{pmatrix}
    \mathbf{U}_1\\
    \mathbf{U}_2
   \end{pmatrix}
  &\mapsto
 (\widetilde{\boldsymbol{\mathcal{P}}}_{\ell}^{\ell-1})^s(\theta_t)
   \begin{pmatrix}
    \mathbf{U}_1\\
    \mathbf{U}_2
   \end{pmatrix},
  \end{align*}
  with
  \begin{align*}
  (\widetilde{\boldsymbol{\mathcal{P}}}_{\ell}^{\ell-1})^s(\theta_t) :=
   \begin{pmatrix}
     \boldsymbol{\mathcal{P}}_{\ell_t}^{\ell_t-1}(\theta_t) & 0\\
     0 & \boldsymbol{\mathcal{P}}_{\ell_t}^{\ell_t-1}(\theta_t)\\
     \boldsymbol{\mathcal{P}}_{\ell_t}^{\ell_t-1}(\gamma(\theta_t)) & 0\\
     0 & \boldsymbol{\mathcal{P}}_{\ell_t}^{\ell_t-1}(\gamma(\theta_t))
    \end{pmatrix}
   \in \mathbb{C}^{4N_t \times 2 N_t}
  \end{align*}
 and the Fourier symbol $\boldsymbol{\mathcal{P}}_{\ell_t}^{\ell_t-1}(\theta_t)\in \mathbb{C}^{N_t \times N_t}$ as defined in Lemma \ref{FourierSymbPandRtime}.
 \end{lemma}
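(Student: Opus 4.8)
The plan is to reduce the claim to the purely temporal statement of Lemma~\ref{FourierSymbPandRtime} by exploiting the tensor structure $(\underline{\mathbf{P}}_{\ell}^{\ell-1})^s = \mathbf{I}_{N_{\ell_x}} \otimes \mathbf{P}_{\ell_t}^{\ell_t-1}$ from \eqref{ProlongationM}. I would start from an arbitrary $\underline{\boldsymbol{\psi}}^{\ell_x,\ell_t-1}(\theta_x,2\theta_t)\in\Psi_{\ell_x,\ell_t-1}(\theta_x,2\theta_t)$ written through its two matrix coefficients $\mathbf{U}_1,\mathbf{U}_2\in\mathbb{C}^{N_t\times N_t}$ as in Definition~\ref{FourierSymbSpaceSemi}, and show that its image lies in $\mathcal{E}_{\ell_x,\ell_t}(\theta_x,\theta_t)$ with the four matrix coefficients predicted by the rows of $(\widetilde{\boldsymbol{\mathcal{P}}}_{\ell}^{\ell-1})^s(\theta_t)$. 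Note that $(\theta_x,\theta_t)\in\Theta_{\ell_x,\ell_t}^{low,f}$ guarantees $\theta_t\in\Theta_{\ell_t}^{low}$, so the hypothesis of Lemma~\ref{FourierSymbPandRtime} is met.

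The first step is to observe that, because $\mathbf{I}_{N_{\ell_x}}$ acts as the identity on the spatial index and hence on the spatial Fourier modes $\varphi_j(\theta_x)$ and $\varphi_j(\gamma(\theta_x))$, the operator acts on each fixed spatial ``time slice'' only through $\mathbf{P}_{\ell_t}^{\ell_t-1}$, where the coarse temporal mode appears weighted by $\varphi_j(\theta_x)\mathbf{U}_1$ and $\varphi_j(\gamma(\theta_x))\mathbf{U}_2$. The second step is to apply Lemma~\ref{FourierSymbPandRtime}, which expresses $\mathbf{P}_{\ell_t}^{\ell_t-1}$ acting on the coarse temporal mode $\boldsymbol{\Phi}^{\ell_t-1}(2\theta_t)$ blockwise as $\boldsymbol{\mathcal{P}}_{\ell_t}^{\ell_t-1}(\theta_t)\boldsymbol{\Phi}^{n,\ell_t}(\theta_t)+\boldsymbol{\mathcal{P}}_{\ell_t}^{\ell_t-1}(\gamma(\theta_t))\boldsymbol{\Phi}^{n,\ell_t}(\gamma(\theta_t))$; since $\mathbf{P}_{\ell_t}^{\ell_t-1}$ is assembled from the nodal blocks $\mathbf{R}_1^T,\mathbf{R}_2^T$, the matrices $\mathbf{U}_1,\mathbf{U}_2$ factor out on the left, producing contributions $\boldsymbol{\mathcal{P}}_{\ell_t}^{\ell_t-1}(\theta_t)\mathbf{U}_i$ at the fine frequency $\theta_t$ and $\boldsymbol{\mathcal{P}}_{\ell_t}^{\ell_t-1}(\gamma(\theta_t))\mathbf{U}_i$ at the fine frequency $\gamma(\theta_t)$, with the spatial frequency left untouched. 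The third step is to collect these contributions according to the four space-time harmonics $(\theta_x,\theta_t),(\gamma(\theta_x),\theta_t),(\theta_x,\gamma(\theta_t)),(\gamma(\theta_x),\gamma(\theta_t))$ in the order fixed by Definition~\ref{SpaceLowFrequencies}: the $(\theta_x,\cdot)$ harmonics receive the $\mathbf{U}_1$-terms and the $(\gamma(\theta_x),\cdot)$ harmonics the $\mathbf{U}_2$-terms, while the temporal labels $\theta_t$ and $\gamma(\theta_t)$ carry $\boldsymbol{\mathcal{P}}_{\ell_t}^{\ell_t-1}(\theta_t)$ and $\boldsymbol{\mathcal{P}}_{\ell_t}^{\ell_t-1}(\gamma(\theta_t))$ respectively. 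Reading off the coefficients gives exactly the action of the stated $4N_t\times 2N_t$ matrix on $(\mathbf{U}_1,\mathbf{U}_2)^T$, which in particular places the image in $\mathcal{E}_{\ell_x,\ell_t}(\theta_x,\theta_t)$.

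The computation itself is routine; the part that needs care — and what I expect to be the main obstacle — is the bookkeeping of the harmonics and the resulting zero pattern. One has to be precise that the spatial identity never mixes $\theta_x$ with $\gamma(\theta_x)$ (this yields the off-diagonal zeros in each block row of $(\widetilde{\boldsymbol{\mathcal{P}}}_{\ell}^{\ell-1})^s$) and that the single coarse temporal frequency $2\theta_t$ splits into precisely the two fine harmonics $\theta_t$ and $\gamma(\theta_t)$ (this yields the $\boldsymbol{\mathcal{P}}_{\ell_t}^{\ell_t-1}(\theta_t)$ rows on top and the $\boldsymbol{\mathcal{P}}_{\ell_t}^{\ell_t-1}(\gamma(\theta_t))$ rows below). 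This is the mirror image of the structure already established for the semi-coarsening restriction in Lemma~\ref{FourierSymbRsemi}, which I would use as a template; since $\mathbf{P}_{\ell_t}^{\ell_t-1}=(\mathbf{R}^{\ell_t}_{\ell_t-1})^T$ one could even deduce the prolongation symbol from the restriction symbol by transposition, but carrying out the direct computation keeps the harmonic ordering transparent.
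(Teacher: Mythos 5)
Your argument is correct, but note that the paper itself gives no proof of this lemma: it is one of the ``five Lemmata from \cite{Gander2016}'' that the authors import without derivation, so there is no in-paper proof to compare against. Your route is the natural one and matches the style of the proofs the paper \emph{does} supply (Lemma \ref{FourierSymbPandRspace} for the spatial prolongation uses exactly the same odd/even index split and the sign identity $\varphi_j(\gamma(\theta))=\pm\varphi_j(\theta)$): reduce via the tensor structure $(\underline{\mathbf{P}}_{\ell}^{\ell-1})^s=\mathbf{I}_{N_{\ell_x}}\otimes\mathbf{P}_{\ell_t}^{\ell_t-1}$, apply Lemma \ref{FourierSymbPandRtime} in the temporal factor, and observe that the spatial identity never mixes $\theta_x$ with $\gamma(\theta_x)$, which produces the checkerboard zero pattern of $(\widetilde{\boldsymbol{\mathcal{P}}}_{\ell}^{\ell-1})^s$. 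The one place where your wording is looser than the mathematics requires is the claim that ``$\mathbf{U}_1,\mathbf{U}_2$ factor out on the left'': the coefficient matrices do \emph{not} commute with the blocks $\mathbf{R}_1^T,\mathbf{R}_2^T$, so you cannot literally pull them through the operator. What saves the step is that $\boldsymbol{\Phi}^{m,\ell_t-1}(2\theta_t)$ is a scalar multiple of the all-ones vector, so $\mathbf{R}_i^T\mathbf{U}\boldsymbol{\Phi}^{m,\ell_t-1}(2\theta_t)=\varphi_m(2\theta_t)\,\mathbf{R}_i^T\mathbf{U}\mathbf{1}$, and the identity of Lemma \ref{FourierSymbPandRtime} holds at the level of the $N_t\times N_t$ matrices $\varphi_n(\theta_t)\boldsymbol{\mathcal{P}}_{\ell_t}^{\ell_t-1}(\theta_t)+\varphi_n(\gamma(\theta_t))\boldsymbol{\mathcal{P}}_{\ell_t}^{\ell_t-1}(\gamma(\theta_t))$, hence survives right-multiplication by $\mathbf{U}_i$; spelling this out would make the proof airtight. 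Your closing remark that the prolongation symbol follows from the restriction symbol by transposition should also be treated with care, since $\boldsymbol{\mathcal{P}}_{\ell_t}^{\ell_t-1}(\theta_t)=\tfrac12(e^{\mathrm{i}\theta_t}\mathbf{R}_1^T+\mathbf{R}_2^T)$ carries a factor $\tfrac12$ absent from $\boldsymbol{\mathcal{R}}_{\ell_t-1}^{\ell_t}(\theta_t)$, reflecting the aliasing of two fine modes onto one coarse mode rather than a literal adjoint.
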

 
 Analogously to the semi-coarsening case we can get the mapping properties for the full-restriction and full-prolongation operators.
 
 \begin{lemma}[Fourier symbol of restriction, full-coarsening]\label{FourierSymbRFull}
 The following mapping property holds for the restriction operator $(\underline{\mathbf{R}}_{\ell-1}^{\ell})^f$:
   \begin{align*}
   (\underline{\mathbf{R}}_{\ell-1}^{\ell})^f: \mathcal{E}_{\ell_x,\ell_t}(\theta_x,\theta_t) &\to \Psi_{\ell_x-1,\ell_t-1}(2\theta_x,2\theta_t),\\
   \begin{pmatrix}
    \mathbf{U}_1\\
    \mathbf{U}_2\\
    \mathbf{U}_3\\
    \mathbf{U}_4
   \end{pmatrix}
  &\mapsto
  (\widetilde{\boldsymbol{\mathcal{R}}}_{\ell-1}^{\ell})^f(\theta_x,\theta_t)
   \begin{pmatrix}
    \mathbf{U}_1\\
    \mathbf{U}_2\\
    \mathbf{U}_3\\
    \mathbf{U}_4
   \end{pmatrix},
  \end{align*}
  with
 \begin{align*}
  &(\widetilde{\boldsymbol{\mathcal{R}}}_{\ell-1}^{\ell})^f(\theta_x,\theta_t) := \\
 &\begin{pmatrix}
  \hat{\boldsymbol{\mathcal{R}}}_{\ell-1}^{\ell}(\theta_x,\theta_t) & \hat{\boldsymbol{\mathcal{R}}}_{\ell-1}^{\ell}(\gamma(\theta_x),\theta_t) & \hat{\boldsymbol{\mathcal{R}}}_{\ell_t}^{\ell}(\theta_x,\gamma(\theta_t)) & \hat{\boldsymbol{\mathcal{R}}}_{\ell-1}^{\ell}(\gamma(\theta_x),\gamma(\theta_t))
 \end{pmatrix}
 \in \mathbb{C}^{N_t \times 4N_t},
 \end{align*}
 and the Fourier symbol 
 \begin{align*}
  \hat{\boldsymbol{\mathcal{R}}}_{\ell-1}^{\ell}(\theta_x,\theta_t) := \mathcal{R}_{\ell_x-1}^{\ell_x}(\theta_x)\boldsymbol{\mathcal{R}}_{\ell_t-1}^{\ell_t}(\theta_t)\in \mathbb{C}^{N_t \times N_t},
 \end{align*}
 with $\mathcal{R}_{\ell_x-1}^{\ell_x}(\theta_x) \in \mathbb{C}$ from Lemma \ref{FourierSymbPandRspace} and $\boldsymbol{\mathcal{R}}_{\ell_t-1}^{\ell_t}(\theta_t) \in \mathbb{C}^{N_t \times N_t}$ from Lemma \ref{FourierSymbPandRtime}.
 \end{lemma}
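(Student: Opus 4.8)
The plan is to mirror the proof of Lemma~\ref{FourierSymbRsemi}, now using \emph{both} tensor factors of the operator. Since $(\underline{\mathbf{R}}_{\ell-1}^{\ell})^f = \mathbf{R}^{\ell_x}_{\ell_x-1}\otimes\mathbf{R}^{\ell_t}_{\ell_t-1}$, I would split it with the mixed-product rule as
\begin{align*}
 (\underline{\mathbf{R}}_{\ell-1}^{\ell})^f = \bigl(\mathbf{R}^{\ell_x}_{\ell_x-1}\otimes\mathbf{I}_{N_tN_{\ell_t-1}}\bigr)\bigl(\mathbf{I}_{N_{\ell_x}}\otimes\mathbf{R}^{\ell_t}_{\ell_t-1}\bigr) = \bigl(\mathbf{R}^{\ell_x}_{\ell_x-1}\otimes\mathbf{I}_{N_tN_{\ell_t-1}}\bigr)\,(\underline{\mathbf{R}}_{\ell-1}^{\ell})^s ,
\end{align*}
so that the temporal half is already handled by Lemma~\ref{FourierSymbRsemi}: it carries $\mathcal{E}_{\ell_x,\ell_t}(\theta_x,\theta_t)$ into $\Psi_{\ell_x,\ell_t-1}(\theta_x,2\theta_t)$ with the block symbol $(\widetilde{\boldsymbol{\mathcal{R}}}_{\ell-1}^{\ell})^s(\theta_t)$ whose nonzero entries are $\boldsymbol{\mathcal{R}}^{\ell_t}_{\ell_t-1}(\theta_t)$ and $\boldsymbol{\mathcal{R}}^{\ell_t}_{\ell_t-1}(\gamma(\theta_t))$. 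It then remains to push the two resulting spatial modes $\boldsymbol{\varphi}^{\ell_x}(\theta_x)$ and $\boldsymbol{\varphi}^{\ell_x}(\gamma(\theta_x))$ through the remaining spatial agglomeration $\mathbf{R}^{\ell_x}_{\ell_x-1}\otimes\mathbf{I}$.

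For that second step I would reuse the computation of Lemma~\ref{FourierSymbPandRspace}. It gives $(\mathbf{R}^{\ell_x}_{\ell_x-1}\boldsymbol{\varphi}^{\ell_x}(\theta_x))_j = \mathcal{R}^{\ell_x}_{\ell_x-1}(\theta_x)\varphi^{\ell_x-1}_j(2\theta_x)$ for the low mode, and the identical argument applied to the high companion, using the aliasing identity $\varphi^{\ell_x}_{2j}(\gamma(\theta_x)) = e^{\pm\mathrm{i}2j\pi}\varphi^{\ell_x}_{2j}(\theta_x) = \varphi^{\ell_x-1}_j(2\theta_x)$, yields $(\mathbf{R}^{\ell_x}_{\ell_x-1}\boldsymbol{\varphi}^{\ell_x}(\gamma(\theta_x)))_j = \mathcal{R}^{\ell_x}_{\ell_x-1}(\gamma(\theta_x))\varphi^{\ell_x-1}_j(2\theta_x)$. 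Thus both spatial modes are absorbed into the \emph{single} coarse spatial mode $\boldsymbol{\varphi}^{\ell_x-1}(2\theta_x)$, so the composite operator maps $\mathcal{E}_{\ell_x,\ell_t}(\theta_x,\theta_t)$ into $\Psi_{\ell_x-1,\ell_t-1}(2\theta_x,2\theta_t)$, which is parametrized by a single coefficient matrix. Multiplying the scalar spatial factor onto each temporal block and summing by linearity, the image of $(\mathbf{U}_1,\mathbf{U}_2,\mathbf{U}_3,\mathbf{U}_4)^T$ is the sum of the four contributions $\mathcal{R}^{\ell_x}_{\ell_x-1}(\eta_x)\boldsymbol{\mathcal{R}}^{\ell_t}_{\ell_t-1}(\eta_t)\mathbf{U}_k$ over the four sign choices $(\eta_x,\eta_t)\in\{\theta_x,\gamma(\theta_x)\}\times\{\theta_t,\gamma(\theta_t)\}$, which is exactly $(\widetilde{\boldsymbol{\mathcal{R}}}_{\ell-1}^{\ell})^f(\theta_x,\theta_t)(\mathbf{U}_1,\mathbf{U}_2,\mathbf{U}_3,\mathbf{U}_4)^T$ with $\hat{\boldsymbol{\mathcal{R}}}^{\ell}_{\ell-1}(\theta_x,\theta_t) = \mathcal{R}^{\ell_x}_{\ell_x-1}(\theta_x)\boldsymbol{\mathcal{R}}^{\ell_t}_{\ell_t-1}(\theta_t)$.

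An equivalent and perhaps more direct route is to evaluate $(\underline{\mathbf{R}}_{\ell-1}^{\ell})^f$ on a generic element of $\mathcal{E}_{\ell_x,\ell_t}(\theta_x,\theta_t)$ right away, using that each basis harmonic factors as $\Phi^n_{j,l}(\eta_x,\eta_t) = \varphi_n(\eta_t)\varphi_j(\eta_x)$ and that $\mathbf{R}^{\ell_x}_{\ell_x-1}\otimes\mathbf{R}^{\ell_t}_{\ell_t-1}$ acts on such a product by applying $\mathbf{R}^{\ell_x}_{\ell_x-1}$ to the spatial factor and the block-banded $\mathbf{R}^{\ell_t}_{\ell_t-1}$ (built from $\mathbf{R}_1,\mathbf{R}_2$) to the time-slab factor; the shifting equality of Lemma~\ref{ShiftingEquality} then collapses the two blocks into $e^{-\mathrm{i}\theta_t}\mathbf{R}_1 + \mathbf{R}_2 = \boldsymbol{\mathcal{R}}^{\ell_t}_{\ell_t-1}(\theta_t)$, exactly as in Lemma~\ref{FourierSymbPandRtime}. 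Either way the statement is a routine combination of the one-directional spatial and temporal symbols.

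The main difficulty is organizational rather than conceptual: keeping the tensor-product index order (time slab $n$, spatial cell $j$, in-element node $l$) consistent throughout, and correctly bookkeeping the two-level aliasing. The key point that must be checked carefully is that under full coarsening the high-frequency companions $\gamma(\theta_x)$ and $\gamma(\theta_t)$ are \emph{both} folded onto the coarse frequency pair $(2\theta_x,2\theta_t)$ — via $\varphi^{\ell_x}_{2j}(\gamma(\theta_x)) = \varphi^{\ell_x-1}_j(2\theta_x)$ and $\varphi_{2n}(\gamma(\theta_t)) = \varphi_n(2\theta_t)$ — so that all four fine harmonics land on one coarse mode and the resulting symbol is an $N_t\times 4N_t$ block row, not the $2N_t\times 4N_t$ one obtained for semi-coarsening. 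Once these identities are in place, the rest is the arithmetic already carried out in Lemmata~\ref{FourierSymbPandRspace}, \ref{FourierSymbPandRtime} and \ref{FourierSymbRsemi}.
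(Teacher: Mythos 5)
The paper does not actually prove this lemma: it is stated as one of the ``five Lemmata from \cite{Gander2016}'' and justified only by the remark that it follows ``analogously to the semi-coarsening case.'' Your argument fills in precisely that analogy and is correct: the mixed-product factorization $\mathbf{R}^{\ell_x}_{\ell_x-1}\otimes\mathbf{R}^{\ell_t}_{\ell_t-1} = \bigl(\mathbf{R}^{\ell_x}_{\ell_x-1}\otimes\mathbf{I}\bigr)\bigl(\mathbf{I}\otimes\mathbf{R}^{\ell_t}_{\ell_t-1}\bigr)$ lets you reuse Lemma~\ref{FourierSymbRsemi} verbatim for the temporal half, and the spatial aliasing identities $\varphi^{\ell_x}_{2j}(\gamma(\theta_x))=\varphi^{\ell_x-1}_j(2\theta_x)$ together with the computation in Lemma~\ref{FourierSymbPandRspace} correctly collapse both spatial companions onto the single coarse mode, yielding the $N_t\times 4N_t$ block row with entries $\mathcal{R}^{\ell_x}_{\ell_x-1}(\eta_x)\boldsymbol{\mathcal{R}}^{\ell_t}_{\ell_t-1}(\eta_t)$ in the ordering matching $\mathbf{U}_1,\dots,\mathbf{U}_4$. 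You also correctly identify the one point that genuinely distinguishes full from semi-coarsening, namely that all four fine harmonics fold onto one coarse mode rather than two, which is exactly why the symbol is a single block row.
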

 
 \begin{lemma}[Fourier symbol of prolongation, full-coarsening]\label{FourierSymbPfull}
 The following mapping property holds for the prolongation operator $(\underline{\mathbf{P}}_{\ell}^{\ell-1})^f$:
  \begin{align*}
  (\underline{\mathbf{P}}_{\ell}^{\ell-1})^f: \Psi_{\ell_x-1,\ell_t-1}(2\theta_x,2\theta_t) &\to \mathcal{E}_{\ell_x,\ell_t}(\theta_x,\theta_t),\\
  \mathbf{U} & \mapsto
  (\widetilde{\boldsymbol{\mathcal{P}}}_{\ell}^{\ell-1})^f(\theta_t,\theta_x) \mathbf{U},
 \end{align*}
 with
 \begin{align*}
  (\widetilde{\boldsymbol{\mathcal{P}}}_{\ell}^{\ell-1})^f(\theta_t,\theta_x) :=
 \begin{pmatrix}
   \hat{\boldsymbol{\mathcal{P}}}_{\ell}^{\ell-1}(\theta_x,\theta_t)\\
   \hat{\boldsymbol{\mathcal{P}}}_{\ell}^{\ell-1}(\gamma(\theta_x),\theta_t)\\
   \hat{\boldsymbol{\mathcal{P}}}_{\ell}^{\ell-1}(\theta_x,\gamma(\theta_t))\\
   \hat{\boldsymbol{\mathcal{P}}}_{\ell}^{\ell-1}(\gamma(\theta_x),\gamma(\theta_t))
  \end{pmatrix}
  \in \mathbb{C}^{4 N_t \times N_t},
 \end{align*}
 and the Fourier symbol
 \begin{align*}
  \hat{\boldsymbol{\mathcal{P}}}_{\ell}^{\ell-1}(\theta_x,\theta_t):=\mathcal{P}_{\ell_x}^{\ell_x-1}(\theta_x)\boldsymbol{\mathcal{P}}_{\ell_t}^{\ell_t-1}(\theta_t)\in \mathbb{C}^{N_t \times N_t},
 \end{align*}
 with $\mathcal{P}_{\ell_x}^{\ell_x-1} \in \mathbb{C}$ from Lemma \ref{FourierSymbPandRspace} and $\boldsymbol{\mathcal{P}}_{\ell_t}^{\ell_t-1} \in \mathbb{C}^{N_t \times N_t}$ from Lemma \ref{FourierSymbPandRtime}.
 \end{lemma}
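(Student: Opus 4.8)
The plan is to exploit the tensor-product identity $(\underline{\mathbf{P}}_{\ell}^{\ell-1})^f = \mathbf{P}_{\ell_x}^{\ell_x-1}\otimes\mathbf{P}_{\ell_t}^{\ell_t-1}$ from \eqref{ProlongationM} and to factor the action of the prolongation into its purely spatial and purely temporal parts, whose Fourier symbols are already available from Lemmata \ref{FourierSymbPandRspace} and \ref{FourierSymbPandRtime}. First I would fix a generic element of $\Psi_{\ell_x-1,\ell_t-1}(2\theta_x,2\theta_t)$: being the full-coarsening analogue of Definition \ref{FourierSymbSpaceSemi} (all four fine harmonics alias to the single coarse mode $(2\theta_x,2\theta_t)$ when both directions are coarsened), such an element is $\boldsymbol{\psi}^{m,\ell_x-1,\ell_t-1}_i = \mathbf{U}\,\boldsymbol{\Phi}^{m,\ell_x-1,\ell_t-1}_i(2\theta_x,2\theta_t)$ for one matrix $\mathbf{U}\in\mathbb{C}^{N_t\times N_t}$, where the coarse mode factorizes entrywise as $\varphi_m(2\theta_t)\varphi_i(2\theta_x)$. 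Since $\mathbf{U}$ acts only on the $N_t$ intra-element degrees of freedom, while $\mathbf{P}_{\ell_x}^{\ell_x-1}$ and $\mathbf{P}_{\ell_t}^{\ell_t-1}$ act on the spatial and temporal block indices respectively, the Kronecker product separates cleanly and $\mathbf{U}$ is carried along unchanged.

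Next I would insert the two one-dimensional expansions. Lemma \ref{FourierSymbPandRspace} expands $\mathbf{P}^{\ell_x-1}_{\ell_x}\boldsymbol{\varphi}^{\ell_x-1}(2\theta_x)$ into a sum over $\{\theta_x,\gamma(\theta_x)\}$ with coefficients $\mathcal{P}^{\ell_x-1}_{\ell_x}(\cdot)$, and Lemma \ref{FourierSymbPandRtime} expands $\mathbf{P}^{\ell_t-1}_{\ell_t}\boldsymbol{\Phi}^{\ell_t-1}(2\theta_t)$ into a sum over $\{\theta_t,\gamma(\theta_t)\}$ with matrix coefficients $\boldsymbol{\mathcal{P}}^{\ell_t-1}_{\ell_t}(\cdot)$. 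Multiplying the two two-term sums yields exactly four terms; writing $\hat{\boldsymbol{\mathcal{P}}}_{\ell}^{\ell-1}(\cdot,\cdot) = \mathcal{P}^{\ell_x-1}_{\ell_x}(\cdot)\,\boldsymbol{\mathcal{P}}^{\ell_t-1}_{\ell_t}(\cdot)$, the $(n,j)$-block of $(\underline{\mathbf{P}}_{\ell}^{\ell-1})^f\underline{\boldsymbol{\psi}}$ becomes
\begin{align*}
 & \hat{\boldsymbol{\mathcal{P}}}_{\ell}^{\ell-1}(\theta_x,\theta_t)\mathbf{U}\,\boldsymbol{\Phi}^{n}_j(\theta_x,\theta_t)
 + \hat{\boldsymbol{\mathcal{P}}}_{\ell}^{\ell-1}(\gamma(\theta_x),\theta_t)\mathbf{U}\,\boldsymbol{\Phi}^{n}_j(\gamma(\theta_x),\theta_t)\\
 & + \hat{\boldsymbol{\mathcal{P}}}_{\ell}^{\ell-1}(\theta_x,\gamma(\theta_t))\mathbf{U}\,\boldsymbol{\Phi}^{n}_j(\theta_x,\gamma(\theta_t))
 + \hat{\boldsymbol{\mathcal{P}}}_{\ell}^{\ell-1}(\gamma(\theta_x),\gamma(\theta_t))\mathbf{U}\,\boldsymbol{\Phi}^{n}_j(\gamma(\theta_x),\gamma(\theta_t)),
\end{align*}
which is a vector in $\mathcal{E}_{\ell_x,\ell_t}(\theta_x,\theta_t)$ whose four harmonic coefficient matrices are exactly those listed in the statement. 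Stacking them into a column is precisely multiplication by the $4N_t\times N_t$ matrix $(\widetilde{\boldsymbol{\mathcal{P}}}_{\ell}^{\ell-1})^f(\theta_t,\theta_x)$ acting on $\mathbf{U}$, which is the claim.

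The only delicate point is the Kronecker-product index bookkeeping: one must keep track that the coarse spatial index $i$ feeds the fine indices $2i-1,2i$ (and likewise in time), that the parity-dependent phase factors $e^{\mathrm{i}\theta_x}$ and $e^{\mathrm{i}\theta_t}$ appearing in the proofs of Lemmata \ref{FourierSymbPandRspace} and \ref{FourierSymbPandRtime} are already folded into the symbols $\mathcal{P}^{\ell_x-1}_{\ell_x}$ and $\boldsymbol{\mathcal{P}}^{\ell_t-1}_{\ell_t}$, and that the ordering of the four blocks of $(\widetilde{\boldsymbol{\mathcal{P}}}_{\ell}^{\ell-1})^f$ matches the ordering of harmonics fixed in Definition \ref{SpaceLowFrequencies}. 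Beyond this routine verification the proof is just the direct product of the already-established spatial and temporal prolongation lemmata, mirroring the way Lemma \ref{FourierSymbRFull} extends Lemma \ref{FourierSymbRsemi}; I expect no genuine obstacle.
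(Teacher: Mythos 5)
Your argument is correct and is the natural (and surely intended) one: since $(\underline{\mathbf{P}}_{\ell}^{\ell-1})^f=\mathbf{P}_{\ell_x}^{\ell_x-1}\otimes\mathbf{P}_{\ell_t}^{\ell_t-1}$ acts on the element indices while the coefficient matrix $\mathbf{U}$ acts on the intra-element degrees of freedom, the two one-dimensional expansions from Lemmata \ref{FourierSymbPandRspace} and \ref{FourierSymbPandRtime} multiply out into exactly the four harmonics of $\mathcal{E}_{\ell_x,\ell_t}(\theta_x,\theta_t)$ with coefficients $\hat{\boldsymbol{\mathcal{P}}}_{\ell}^{\ell-1}(\cdot,\cdot)\mathbf{U}$, in the ordering fixed by Definition \ref{SpaceLowFrequencies}. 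Note that the paper itself gives no proof of this lemma (it is quoted from the literature together with the other transfer-operator symbols), so there is no in-text argument to compare against; your reconstruction matches the pattern of the proven spatial Lemma \ref{FourierSymbPandRspace} and of the stated full-coarsening restriction Lemma \ref{FourierSymbRFull}, and I see no gap.
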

  
 With Lemma \ref{FourierSymbolL} we obtain the mapping property for coarse grid correction when semi-coarsening in time is applied:
 \begin{align}\label{MappingSystemMatrixSemi}
 \begin{split}
 (\underline{\mathbf{L}}_{2\tau_{\ell},\xi_{\ell}})^{-1}: \Psi_{\ell_x,\ell_t-1}(\theta_x,2\theta_t) & \rightarrow \Psi_{\ell_x,\ell_t-1}(\theta_x,2\theta_t),\\
 \begin{pmatrix}
 \mathbf{U}_1\\
 \mathbf{U}_2
 \end{pmatrix}
 & \mapsto (\hat{\boldsymbol{\mathcal{L}}}^s_{2\tau_{\ell},\xi_{\ell}}(\theta_x,2\theta_t))^{-1}
 \begin{pmatrix}
 \mathbf{U}_1\\
 \mathbf{U}_2
 \end{pmatrix},
 \end{split}
 \end{align}
 with
 \begin{align*}
 (\hat{\boldsymbol{\mathcal{L}}}^s_{2\tau_{\ell},\xi_{\ell}}(\theta_x,2\theta_t))^{-1} :=
 \begin{pmatrix}
 (\boldsymbol{\mathcal{L}}_{2\tau_{\ell},\xi_{\ell}}(\theta_x,2\theta_t))^{-1} & 0 \\
 0 & (\boldsymbol{\mathcal{L}}_{2\tau_{\ell},\xi_{\ell}}(\gamma(\theta_x),2\theta_t))^{-1} 
 \end{pmatrix}
 \in \mathbb{C}^{2N_t\times2N_t}.
 \end{align*}
 
 A complication arises for frequencies $(\theta_x,\theta_t)$ such that $\boldsymbol{\mathcal{L}}_{2\tau_{\ell},\xi_{\ell}}(\theta_x,2\theta_t)=0$. For some more discussion of the reasons for this formal complication we refer to \cite{Trottenberg2001}. In order to make sure that $\hat{\boldsymbol{\mathcal{L}}}^s$ exists, we exclude the set of frequencies
 \begin{align}\label{SetSemi}
  \Lambda_s := \left\{ (\theta_x,\theta_t) \in \left( -\pi,\pi \right]\times \left( -\frac{\pi}{2},\frac{\pi}{2} \right]: \boldsymbol{\mathcal{L}}_{\tau_{\ell},\xi_{\ell}}(\theta_x,\theta_t)=0 \text{ or } \boldsymbol{\mathcal{L}}_{2\tau_{\ell},\xi_{\ell}}(\theta_x,2\theta_t) = 0 \right\}.
 \end{align}
 
 For the full-coarsening case we obtain the mapping property
 \begin{align}\label{MappingSystemMatrixFull}
 \begin{split}
 (\underline{\mathbf{L}}_{2\tau_{\ell},2\xi_{\ell}})^{-1}: \Psi_{\ell_x-1,\ell_t-1}(2\theta_x,2\theta_t) & \rightarrow \Psi_{\ell_x-1,\ell_t-1}(2\theta_x,2\theta_t),\\
 \mathbf{U}
 & \mapsto (\hat{\boldsymbol{\mathcal{L}}}^f_{2\tau_{\ell},2\xi_{\ell}}(2\theta_x,2\theta_t))^{-1} \mathbf{U},
 \end{split}
 \end{align}
 with
 \begin{align}\label{SetFull}
 (\hat{\boldsymbol{\mathcal{L}}}^f_{2\tau_{\ell},2\xi_{\ell}}(2\theta_x,2\theta_t))^{-1} :=
 (\boldsymbol{\mathcal{L}}_{2\tau_{\ell},2\xi_{\ell}}(2\theta_x,2\theta_t))^{-1}
 \in \mathbb{C}^{N_t\times N_t}.
 \end{align}
 
 As before, a complication arises for frequencies $(\theta_x,\theta_t)$ such that $\boldsymbol{\mathcal{L}}_{2\tau_{\ell},2\xi_{\ell}}(2\theta_x,2\theta_t)=0$. In order to make sure that $\hat{\boldsymbol{\mathcal{L}}}^f$ exists, we exclude the set of frequencies
 \begin{align*}
  \Lambda_f := \left\{ (\theta_x,\theta_t) \in \left( -\frac{\pi}{2},\frac{\pi}{2} \right]^2: \boldsymbol{\mathcal{L}}_{\tau_{\ell},\xi_{\ell}}(\theta_x,\theta_t)=0 \text{ or } \boldsymbol{\mathcal{L}}_{2\tau_{\ell},2\xi_{\ell}}(2\theta_x,2\theta_t) = 0 \right\}.
 \end{align*}
 
 We are now able to get the Fourier symbol of the two-grid operators and calculate the asymptotic convergence factors.
 
 \begin{theorem}[Fourier symbol of two-grid operator, semi-coarsening]\label{FourierSymbTwoGridsemi}
 For $(\theta_x,\theta_t)\in \Theta_{\ell_x,\ell_t}^{low,f}$ the following mapping property holds for the two-grid operator $\underline{\mathbf{M}}^s_{\tau_{\ell},\xi_{\ell}}$ in \eqref{Semi2GridMatrix} with semi-coarsening in time:
 \begin{align*}
 \underline{\mathbf{M}}^s_{\tau_{\ell},\xi_{\ell}}: \mathcal{E}_{\ell_x,\ell_t}(\theta_x,\theta_t) & \rightarrow \mathcal{E}_{\ell_x,\ell_t}(\theta_x,\theta_t),\\
 \begin{pmatrix}
 \mathbf{U}_1\\
 \mathbf{U}_2\\
 \mathbf{U}_3\\
 \mathbf{U}_4
 \end{pmatrix}
 & \mapsto
 \boldsymbol{\mathcal{M}}^s(\theta_x,\theta_t)
 \begin{pmatrix}
 \mathbf{U}_1\\
 \mathbf{U}_2\\
 \mathbf{U}_3\\
 \mathbf{U}_4
 \end{pmatrix},
 \end{align*}
 with
 \begin{align}\label{SymbolSemi}
 \begin{split}
 \boldsymbol{\mathcal{M}}^s(\theta_x,\theta_t) &:= \widetilde{\boldsymbol{\mathcal{S}}}_{\tau_{\ell},\xi_{\ell}}^{\nu_2}(\theta_x,\theta_t)(\mathbf{I}_{4N_t}-(\widetilde{\boldsymbol{\mathcal{P}}}^{\ell-1}_{\ell})^{s}(\theta_t)(\hat{\boldsymbol{\mathcal{L}}}^s_{2\tau_{\ell},\xi_{\ell}}(\theta_x,2\theta_t))^{-1}\\
 & ~~~~ (\widetilde{\boldsymbol{\mathcal{R}}}_{\ell-1}^{\ell})^{s}(\theta_t) \widetilde{\boldsymbol{\mathcal{L}}}_{\tau_{\ell},\xi_{\ell}}(\theta_x,\theta_t)) \widetilde{\boldsymbol{\mathcal{S}}}_{\tau_{\ell},\xi_{\ell}}^{\nu_1}(\theta_x,\theta_t) \in \mathbb{C}^{4N_t \times 4N_t}.
 \end{split}
 \end{align}
 \end{theorem}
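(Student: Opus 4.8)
The plan is to obtain the asserted mapping property by composing, from right to left, the mapping properties of the five operators making up the two-grid iteration matrix $\underline{\mathbf{M}}^s_{\tau_{\ell},\xi_{\ell}}$ in \eqref{Semi2GridMatrix}: those recorded in \eqref{MappingSmoother} for the smoother $\underline{\mathbf{S}}_{\tau_{\ell},\xi_{\ell}}$, in \eqref{MappingSystemMatrix} for the fine system matrix $\underline{\mathbf{L}}_{\tau_{\ell},\xi_{\ell}}$, in Lemma \ref{FourierSymbRsemi} for the semi-restriction $(\underline{\mathbf{R}}_{\ell-1}^{\ell})^s$, in \eqref{MappingSystemMatrixSemi} for the coarse solve $(\underline{\mathbf{L}}_{2\tau_{\ell},\xi_{\ell}})^{-1}$, and in Lemma \ref{FourierSymbPsemi} for the semi-prolongation $(\underline{\mathbf{P}}_{\ell}^{\ell-1})^s$. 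Concretely, I would fix $(\theta_x,\theta_t)\in\Theta_{\ell_x,\ell_t}^{low,f}$, assume $(\theta_x,\theta_t)$ and its spatial shift $(\gamma(\theta_x),\theta_t)$ lie outside $\Lambda_s$ so that $\hat{\boldsymbol{\mathcal{L}}}^s_{2\tau_{\ell},\xi_{\ell}}(\theta_x,2\theta_t)$ is invertible, and take an arbitrary $\underline{\boldsymbol{\psi}}\in\mathcal{E}_{\ell_x,\ell_t}(\theta_x,\theta_t)$, identified with its coefficient quadruple $(\mathbf{U}_1,\mathbf{U}_2,\mathbf{U}_3,\mathbf{U}_4)$ in the basis $\{\underline{\boldsymbol{\Phi}}(\theta_x,\theta_t),\underline{\boldsymbol{\Phi}}(\gamma(\theta_x),\theta_t),\underline{\boldsymbol{\Phi}}(\theta_x,\gamma(\theta_t)),\underline{\boldsymbol{\Phi}}(\gamma(\theta_x),\gamma(\theta_t))\}$ of Definition \ref{SpaceLowFrequencies}.

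Then I would propagate $\underline{\boldsymbol{\psi}}$ through the chain of operators. Applying $\underline{\mathbf{S}}_{\tau_{\ell},\xi_{\ell}}$ once keeps us in $\mathcal{E}_{\ell_x,\ell_t}(\theta_x,\theta_t)$ and multiplies the quadruple by the block-diagonal matrix $\widetilde{\boldsymbol{\mathcal{S}}}_{\tau_{\ell},\xi_{\ell}}(\theta_x,\theta_t)$ from \eqref{MappingSmoother}; since $\mathcal{E}_{\ell_x,\ell_t}(\theta_x,\theta_t)$ is invariant and the symbol-action is an algebra homomorphism there, $\nu_1$ applications give the factor $\widetilde{\boldsymbol{\mathcal{S}}}_{\tau_{\ell},\xi_{\ell}}^{\nu_1}(\theta_x,\theta_t)$. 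Next $\underline{\mathbf{L}}_{\tau_{\ell},\xi_{\ell}}$ acts by $\widetilde{\boldsymbol{\mathcal{L}}}_{\tau_{\ell},\xi_{\ell}}(\theta_x,\theta_t)$ on $\mathcal{E}_{\ell_x,\ell_t}(\theta_x,\theta_t)$; the semi-restriction maps into $\Psi_{\ell_x,\ell_t-1}(\theta_x,2\theta_t)$ with action $(\widetilde{\boldsymbol{\mathcal{R}}}_{\ell-1}^{\ell})^s(\theta_t)$; the coarse solve is an endomorphism of $\Psi_{\ell_x,\ell_t-1}(\theta_x,2\theta_t)$ with action $(\hat{\boldsymbol{\mathcal{L}}}^s_{2\tau_{\ell},\xi_{\ell}}(\theta_x,2\theta_t))^{-1}$; and the semi-prolongation maps $\Psi_{\ell_x,\ell_t-1}(\theta_x,2\theta_t)$ back into $\mathcal{E}_{\ell_x,\ell_t}(\theta_x,\theta_t)$ with action $(\widetilde{\boldsymbol{\mathcal{P}}}_{\ell}^{\ell-1})^s(\theta_t)$. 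Since $\underline{\mathbf{I}}$ restricted to $\mathcal{E}_{\ell_x,\ell_t}(\theta_x,\theta_t)$ acts as $\mathbf{I}_{4N_t}$, the bracketed coarse-grid-correction factor of \eqref{Semi2GridMatrix} acts by $\mathbf{I}_{4N_t}-(\widetilde{\boldsymbol{\mathcal{P}}}_{\ell}^{\ell-1})^s(\theta_t)(\hat{\boldsymbol{\mathcal{L}}}^s_{2\tau_{\ell},\xi_{\ell}}(\theta_x,2\theta_t))^{-1}(\widetilde{\boldsymbol{\mathcal{R}}}_{\ell-1}^{\ell})^s(\theta_t)\widetilde{\boldsymbol{\mathcal{L}}}_{\tau_{\ell},\xi_{\ell}}(\theta_x,\theta_t)$ and again preserves $\mathcal{E}_{\ell_x,\ell_t}(\theta_x,\theta_t)$, so a final $\nu_2$-fold application of $\underline{\mathbf{S}}_{\tau_{\ell},\xi_{\ell}}$ multiplies from the left by $\widetilde{\boldsymbol{\mathcal{S}}}_{\tau_{\ell},\xi_{\ell}}^{\nu_2}(\theta_x,\theta_t)$. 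Reading the product in the order of application gives exactly $\boldsymbol{\mathcal{M}}^s(\theta_x,\theta_t)$ of \eqref{SymbolSemi} and shows that $\underline{\mathbf{M}}^s_{\tau_{\ell},\xi_{\ell}}$ leaves $\mathcal{E}_{\ell_x,\ell_t}(\theta_x,\theta_t)$ invariant.

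The only genuinely delicate point — and the main, if modest, obstacle — is the bookkeeping of Fourier spaces: at every stage one must verify that the output of one operator lies in the domain on which the next operator's symbol is defined, most importantly that $(\underline{\mathbf{R}}_{\ell-1}^{\ell})^s\underline{\mathbf{L}}_{\tau_{\ell},\xi_{\ell}}$ applied to $\mathcal{E}_{\ell_x,\ell_t}(\theta_x,\theta_t)$ lands in $\Psi_{\ell_x,\ell_t-1}(\theta_x,2\theta_t)$, the correct domain of the coarse solve, and that excising $\Lambda_s$ is precisely what renders $\hat{\boldsymbol{\mathcal{L}}}^s_{2\tau_{\ell},\xi_{\ell}}(\theta_x,2\theta_t)$ invertible. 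Granting the invariance and chaining statements already established in Section \ref{FourierSymbols}, everything that remains is mechanical multiplication of the symbol matrices, so I would present the argument as a short composition-of-mappings computation rather than a long calculation.
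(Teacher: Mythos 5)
Your proposal is correct and follows essentially the same route as the paper's proof: the paper likewise establishes the result by chaining the mapping properties \eqref{MappingSmoother}, \eqref{MappingSystemMatrix}, Lemma \ref{FourierSymbRsemi}, \eqref{MappingSystemMatrixSemi} and Lemma \ref{FourierSymbPsemi} through the spaces $\mathcal{E}_{\ell_x,\ell_t}(\theta_x,\theta_t)$ and $\Psi_{\ell_x,\ell_t-1}(\theta_x,2\theta_t)$, then reading off the product of symbols. Your additional remarks on excluding $\Lambda_s$ and on the space bookkeeping are consistent with how the paper handles these points.
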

 
 \begin{proof}
 The two-grid operator for semi-coarsening is given by
 \begin{align*}
 \underline{\mathbf{M}}^s_{\tau_{\ell},\xi_{\ell}} = \underline{\mathbf{S}}_{\tau_{\ell},\xi_{\ell}}^{\nu_2} (\underline{\mathbf{I}} - (\underline{\mathbf{P}}_{\ell}^{\ell-1})_s (\underline{\mathbf{L}}_{2\tau_{\ell},\xi_{\ell}})^{-1} (\underline{\mathbf{R}}_{\ell-1}^{\ell})_s \underline{\mathbf{L}}_{\tau_{\ell},\xi_{\ell}})\underline{\mathbf{S}}_{\tau_{\ell},\xi_{\ell}}^{\nu_1}.
 \end{align*}
 By previous results we obtain
 \begin{align*}
 \underline{\mathbf{M}}^s_{\tau_{\ell},\xi_{\ell}}: & \mathcal{E}_{\ell_x,\ell_t} \xrightarrow[]{\eqref{MappingSmoother}} \mathcal{E}_{\ell_x,\ell_t} \xrightarrow[]{\eqref{MappingSystemMatrix}} \mathcal{E}_{\ell_x,\ell_t} \xrightarrow[]{\ref{FourierSymbRsemi}} \Psi_{\ell_x,\ell_t-1}(\theta_x,2\theta_t) \\
 & \xrightarrow[]{\eqref{MappingSystemMatrixSemi}} \Psi_{\ell_x,\ell_t-1}(\theta_x,2\theta_t) \xrightarrow[]{\ref{FourierSymbPsemi}} \mathcal{E}_{\ell_x,\ell_t}
 \xrightarrow[]{\eqref{MappingSmoother}} \mathcal{E}_{\ell_x,\ell_t},
 \end{align*}
 with the mapping
 \begingroup
 \allowdisplaybreaks
 \begin{align*}
 \begin{pmatrix}
 \mathbf{U}_1\\
 \mathbf{U}_2\\
 \mathbf{U}_3\\
 \mathbf{U}_4
 \end{pmatrix} 
 & \mapsto
 \widetilde{\boldsymbol{\mathcal{S}}}_{\tau_{\ell},\xi_{\ell}}^{\nu_1}
 \begin{pmatrix}
 \mathbf{U}_1\\
 \mathbf{U}_2\\
 \mathbf{U}_3\\
 \mathbf{U}_4
 \end{pmatrix}
 \mapsto
 \widetilde{\boldsymbol{\mathcal{L}}}_{\tau_{\ell},\xi_{\ell}} \widetilde{\boldsymbol{\mathcal{S}}}_{\tau_{\ell},\xi_{\ell}}^{\nu_1}
 \begin{pmatrix}
 \mathbf{U}_1\\
 \mathbf{U}_2\\
 \mathbf{U}_3\\
 \mathbf{U}_4
 \end{pmatrix}
 \mapsto
 (\widetilde{\boldsymbol{\mathcal{R}}}_{\ell-1}^{\ell})^{s} \widetilde{\boldsymbol{\mathcal{L}}}_{\tau_{\ell},\xi_{\ell}} \widetilde{\boldsymbol{\mathcal{S}}}_{\tau_{\ell},\xi_{\ell}}^{\nu_1}
 \begin{pmatrix}
 \mathbf{U}_1\\
 \mathbf{U}_2\\
 \mathbf{U}_3\\
 \mathbf{U}_4
 \end{pmatrix}\\
 &\mapsto
 (\hat{\boldsymbol{\mathcal{L}}}^s_{2\tau_{\ell},\xi_{\ell}})^{-1} (\widetilde{\boldsymbol{\mathcal{R}}}_{\ell-1}^{\ell})^{s} \widetilde{\boldsymbol{\mathcal{L}}}_{\tau_{\ell},\xi_{\ell}} \widetilde{\boldsymbol{\mathcal{S}}}_{\tau_{\ell},\xi_{\ell}}^{\nu_1}
 \begin{pmatrix}
 \mathbf{U}_1\\
 \mathbf{U}_2\\
 \mathbf{U}_3\\
 \mathbf{U}_4
 \end{pmatrix}\\
 &\mapsto
 (\widetilde{\boldsymbol{\mathcal{P}}}^{\ell-1}_{\ell})^{s}(\hat{\boldsymbol{\mathcal{L}}}^s_{2\tau_{\ell},\xi_{\ell}})^{-1} (\widetilde{\boldsymbol{\mathcal{R}}}_{\ell-1}^{\ell})^{s} \widetilde{\boldsymbol{\mathcal{L}}}_{\tau_{\ell},\xi_{\ell}} \widetilde{\boldsymbol{\mathcal{S}}}_{\tau_{\ell},\xi_{\ell}}^{\nu_1}
 \begin{pmatrix}
 \mathbf{U}_1\\
 \mathbf{U}_2\\
 \mathbf{U}_3\\
 \mathbf{U}_4
 \end{pmatrix}\\
 &\mapsto
 (\mathbf{I}_{4N_t}-(\widetilde{\boldsymbol{\mathcal{P}}}^{\ell-1}_{\ell})^{s}(\hat{\boldsymbol{\mathcal{L}}}^s_{2\tau_{\ell},\xi_{\ell}})^{-1} (\widetilde{\boldsymbol{\mathcal{R}}}_{\ell-1}^{\ell})^{s} \widetilde{\boldsymbol{\mathcal{L}}}_{\tau_{\ell},\xi_{\ell}}) \widetilde{\boldsymbol{\mathcal{S}}}_{\tau_{\ell},\xi_{\ell}}^{\nu_1}
 \begin{pmatrix}
 \mathbf{U}_1\\
 \mathbf{U}_2\\
 \mathbf{U}_3\\
 \mathbf{U}_4
 \end{pmatrix}\\
 &\mapsto
 \widetilde{\boldsymbol{\mathcal{S}}}_{\tau_{\ell},\xi_{\ell}}^{\nu_2}(\mathbf{I}_{4N_t}-(\widetilde{\boldsymbol{\mathcal{P}}}^{\ell-1}_{\ell})^{s}(\hat{\boldsymbol{\mathcal{L}}}^s_{2\tau_{\ell},\xi_{\ell}})^{-1} (\widetilde{\boldsymbol{\mathcal{R}}}_{\ell-1}^{\ell})^{s} \widetilde{\boldsymbol{\mathcal{L}}}_{\tau_{\ell},\xi_{\ell}}) \widetilde{\boldsymbol{\mathcal{S}}}_{\tau_{\ell},\xi_{\ell}}^{\nu_1}
 \begin{pmatrix}
 \mathbf{U}_1\\
 \mathbf{U}_2\\
 \mathbf{U}_3\\
 \mathbf{U}_4
 \end{pmatrix}.
 \end{align*}
 \endgroup
 \end{proof}
 
 \begin{theorem}[Fourier symbol of two-grid operator, full-coarsening]\label{FourierSymbTwoGridfull}
 For $(\theta_x,\theta_t) \in \Theta_{{\ell}_x,{\ell}_t}^{low,f}$ the following mapping property holds for the two-grid operator $\underline{\mathbf{M}}^f_{\tau_{\ell},\xi_{\ell}}$ in \eqref{Full2GridMatrix} with space-time coarsening:
 \begin{align*}
 \underline{\mathbf{M}}^f_{\tau_{\ell},\xi_{\ell}}: \mathcal{E}_{{\ell}_x,{\ell}_t}(\theta_x,\theta_t) & \rightarrow \mathcal{E}_{{\ell}_x,{\ell}_t}(\theta_x,\theta_t),\\
 \begin{pmatrix}
 \mathbf{U}_1\\
 \mathbf{U}_2\\
 \mathbf{U}_3\\
 \mathbf{U}_4
 \end{pmatrix}
 & \mapsto
 \boldsymbol{\mathcal{M}}^f(\theta_x,\theta_t)
 \begin{pmatrix}
 \mathbf{U}_1\\
 \mathbf{U}_2\\
 \mathbf{U}_3\\
 \mathbf{U}_4
 \end{pmatrix},
 \end{align*}
 with
 \begin{align}\label{SymbolFull}
 \begin{split}
 \boldsymbol{\mathcal{M}}^f(\theta_x,\theta_t) &:=  \widetilde{\boldsymbol{\mathcal{S}}}_{\tau_{\ell},\xi_{\ell}}^{\nu_2}(\theta_x,\theta_t)(\mathbf{I}_{4N_t}-(\widetilde{\boldsymbol{\mathcal{P}}}^{\ell-1}_{\ell})^{f}(\theta_x,\theta_t)(\hat{\boldsymbol{\mathcal{L}}}^f_{2\tau_{\ell},2\xi_{\ell}}(2\theta_x,2\theta_t))^{-1}\\ 
 & ~~~~ (\widetilde{\boldsymbol{\mathcal{R}}}_{\ell-1}^{\ell})^{f}(\theta_x,\theta_t) \widetilde{\boldsymbol{\mathcal{L}}}_{\tau_{\ell},\xi_{\ell}}(\theta_x,\theta_t)) \widetilde{\boldsymbol{\mathcal{S}}}_{\tau_{\ell},\xi_{\ell}}^{\nu_1}(\theta_x,\theta_t) \in \mathbb{C}^{4N_t \times 4N_t}.
 \end{split}
 \end{align}
 \end{theorem}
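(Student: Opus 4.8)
The plan is to mirror the proof of Theorem~\ref{FourierSymbTwoGridsemi}, tracking how the iteration matrix \eqref{Full2GridMatrix} acts on a basis of the low-frequency harmonic space $\mathcal{E}_{\ell_x,\ell_t}(\theta_x,\theta_t)$ one factor at a time. Writing
\begin{align*}
\underline{\mathbf{M}}^f_{\tau_{\ell},\xi_{\ell}} = \underline{\mathbf{S}}_{\tau_{\ell},\xi_{\ell}}^{\nu_2}\bigl(\underline{\mathbf{I}} - (\underline{\mathbf{P}}_{\ell}^{\ell-1})^f (\underline{\mathbf{L}}_{2\tau_{\ell},2\xi_{\ell}})^{-1} (\underline{\mathbf{R}}_{\ell-1}^{\ell})^f \underline{\mathbf{L}}_{\tau_{\ell},\xi_{\ell}}\bigr)\underline{\mathbf{S}}_{\tau_{\ell},\xi_{\ell}}^{\nu_1},
\end{align*}
I would compose the already-established mapping properties in the order they appear: the $\nu_1$ pre-smoothing steps via \eqref{MappingSmoother} (yielding $\widetilde{\boldsymbol{\mathcal{S}}}_{\tau_{\ell},\xi_{\ell}}^{\nu_1}$ acting on the coefficient tuple $(\mathbf{U}_1,\mathbf{U}_2,\mathbf{U}_3,\mathbf{U}_4)^T$), the fine-grid operator via \eqref{MappingSystemMatrix} (yielding $\widetilde{\boldsymbol{\mathcal{L}}}_{\tau_{\ell},\xi_{\ell}}$), the full restriction via Lemma~\ref{FourierSymbRFull} into $\Psi_{\ell_x-1,\ell_t-1}(2\theta_x,2\theta_t)$, the coarse-grid solve via \eqref{MappingSystemMatrixFull} (yielding $(\hat{\boldsymbol{\mathcal{L}}}^f_{2\tau_{\ell},2\xi_{\ell}}(2\theta_x,2\theta_t))^{-1}$), the full prolongation via Lemma~\ref{FourierSymbPfull} back into $\mathcal{E}_{\ell_x,\ell_t}(\theta_x,\theta_t)$, then subtraction from $\mathbf{I}_{4N_t}$, and finally the $\nu_2$ post-smoothing steps again via \eqref{MappingSmoother}. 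Concatenating the corresponding symbol matrices in that same order reproduces exactly \eqref{SymbolFull}.

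The one genuinely new ingredient relative to the semi-coarsening case is that the full restriction collapses all four fine-grid harmonics onto a single coarse mode of frequency $(2\theta_x,2\theta_t)$, so that $\Psi_{\ell_x-1,\ell_t-1}(2\theta_x,2\theta_t)$ is only $N_t$-dimensional (in terms of coefficient matrices), not $2N_t$-dimensional as for semi-coarsening. I would justify this by recording that $2\gamma(\theta)\equiv 2\theta \pmod{2\pi}$, whence at coarse grid points $\varphi_{2j}(\gamma(\theta_x))=\varphi_j^{\ell_x-1}(2\gamma(\theta_x))=\varphi_j^{\ell_x-1}(2\theta_x)$ and likewise in time; this is precisely what makes $(\widetilde{\boldsymbol{\mathcal{R}}}_{\ell-1}^{\ell})^f$ a $N_t\times 4N_t$ block-row matrix (one $\hat{\boldsymbol{\mathcal{R}}}_{\ell-1}^{\ell}$ block per harmonic) and $(\widetilde{\boldsymbol{\mathcal{P}}}_{\ell}^{\ell-1})^f$ a $4N_t\times N_t$ block-column matrix. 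Consequently the coarse operator symbol is the single $N_t\times N_t$ block $\boldsymbol{\mathcal{L}}_{2\tau_{\ell},2\xi_{\ell}}(2\theta_x,2\theta_t)$ from \eqref{SetFull}, and its invertibility is exactly the content of excluding the exceptional set $\Lambda_f$; I would state this restriction explicitly, since the theorem tacitly assumes $(\theta_x,\theta_t)\notin\Lambda_f$.

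The main obstacle — really the only point requiring care — is verifying that each intermediate object lands in the Fourier space claimed, so that the next mapping property is applicable: that $\widetilde{\boldsymbol{\mathcal{L}}}_{\tau_{\ell},\xi_{\ell}}$ preserves $\mathcal{E}_{\ell_x,\ell_t}(\theta_x,\theta_t)$, that $(\underline{\mathbf{R}}_{\ell-1}^{\ell})^f$ maps it into $\Psi_{\ell_x-1,\ell_t-1}(2\theta_x,2\theta_t)$, that $(\underline{\mathbf{L}}_{2\tau_{\ell},2\xi_{\ell}})^{-1}$ maps that space to itself, and that $(\underline{\mathbf{P}}_{\ell}^{\ell-1})^f$ returns to $\mathcal{E}_{\ell_x,\ell_t}(\theta_x,\theta_t)$. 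Each of these invariances is supplied by Lemmas~\ref{FourierSymbolL}, \ref{FourierSymbolS}, \ref{FourierSymbRFull}, \ref{FourierSymbPfull} together with the mapping properties \eqref{MappingSmoother}, \eqref{MappingSystemMatrix}, \eqref{MappingSystemMatrixFull}, so the proof reduces to stringing these together and reading off the symbol; no new estimates are needed. I would present it, as in the semi-coarsening proof, as a chain of maps $\mathcal{E}_{\ell_x,\ell_t}\to\mathcal{E}_{\ell_x,\ell_t}\to\Psi_{\ell_x-1,\ell_t-1}(2\theta_x,2\theta_t)\to\Psi_{\ell_x-1,\ell_t-1}(2\theta_x,2\theta_t)\to\mathcal{E}_{\ell_x,\ell_t}\to\mathcal{E}_{\ell_x,\ell_t}$ followed by the explicit action on the coefficient tuple, which assembles into $\boldsymbol{\mathcal{M}}^f(\theta_x,\theta_t)$ as in \eqref{SymbolFull}.
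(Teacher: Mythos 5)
Your proposal is correct and follows exactly the route the paper takes: the paper's own proof is the single line ``analogous to the previous one,'' i.e.\ the same chain of mapping properties \eqref{MappingSmoother}, \eqref{MappingSystemMatrix}, Lemma~\ref{FourierSymbRFull}, \eqref{MappingSystemMatrixFull}, Lemma~\ref{FourierSymbPfull} composed in order, which is precisely what you spell out. Your additional remarks on the coarse space being only $N_t$-dimensional and on excluding $\Lambda_f$ are correct and, if anything, more explicit than the paper.
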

 
 \begin{proof}
 The proof follows analogous to the previous one.
 \end{proof}

 \section{Smoothing Analysis}\label{SectionSmoothingAnalysis}

 We now have all tools at hand to analyze the elements of the multigrid iteration. We start with the smoother. The asymptotic smoothing factor of the damped block Jacobi method \eqref{ItMatrixDampedJacobi} can be measured by computing the spectral radius of its symbol $\boldsymbol{\mathcal{S}}_{\tau_{\ell},\xi_{\ell}}(\theta_x,\theta_t)$, which is of much smaller size and thus makes the calculations feasible.
 
 \begin{definition}[\cite{Wesseling2004}]\label{AsymptSmoothingFactors}
 We define the \textit{asymptotic smoothing factors} for semi- and full-coarsening as
 \begin{align*}
 \varrho(\boldsymbol{\mathcal{S}}^s) &:= \max \{ \varrho (\boldsymbol{\mathcal{S}}_{\tau_{\ell},\xi_{\ell}}(\theta_x,\theta_t)) : (\theta_x,\theta_t) \in \Theta_{{\ell}_x,{\ell}_t}^{high,s}\},\\
 \varrho(\boldsymbol{\mathcal{S}}^f) &:= \max \{ \varrho( \boldsymbol{\mathcal{S}}_{\tau_{\ell},\xi_{\ell}}(\theta_x,\theta_t) ): (\theta_x,\theta_t) \in \Theta_{{\ell}_x,{\ell}_t}^{high,f}\},
 \end{align*}
 with $\boldsymbol{\mathcal{S}}_{\tau_{\ell},\xi_{\ell}}(\theta_x,\theta_t)$ the Fourier symbol of the smoother and the set of frequencies defined in \eqref{FrequenciesSemi} and \eqref{FrequenciesFull}.
 \end{definition}

 \begin{lemma}\label{SpectralRadiusS}
 The spectral radius of the Fourier symbol of the smoother $\boldsymbol{\mathcal{S}}_{\tau_{\ell},\xi_{\ell}}(\theta_x,\theta_t)$ is given by
 \begin{align*}
  \rho(\boldsymbol{\mathcal{S}}_{\tau_{\ell},\xi_{\ell}}(\theta_x,\theta_t)) = \max \{|1-\omega_t|,S(\omega_t,\theta_x,\theta_t)\}
 \end{align*}
 with
 \begin{align}\label{FunctionSpectralRadiusS}
  S(\omega_t,\theta_x,\theta_t) := |1-\omega_t+e^{-\mathrm{i}\theta_t}\omega_t R(-\mu\beta(\theta_x))|,
 \end{align}
 $R$ the stability function of the DG-SEM time stepping scheme, $\beta(\theta_x):= 1-e^{-\mathrm{i}\theta_x}$ and CFL number $\mu := \frac{a \Delta \tau_{\ell}}{\Delta x_{\ell}}$.
 \end{lemma}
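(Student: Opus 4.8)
The plan is to read off the spectrum of $\boldsymbol{\mathcal{S}}_{\tau_{\ell},\xi_{\ell}}(\theta_x,\theta_t)$ directly from its closed form in Lemma~\ref{FourierSymbolS}, feed in the eigenvalue result of Lemma~\ref{Eigenvalues}, and then take the maximum modulus. Write $\beta(\theta_x) := 1-e^{-\mathrm{i}\theta_x}$ and $\lambda(\theta_x) := \frac{a}{\Delta x_\ell}\beta(\theta_x)$, so that Lemma~\ref{FourierSymbolS} reads
\begin{align*}
\boldsymbol{\mathcal{S}}_{\tau_{\ell},\xi_{\ell}}(\theta_x,\theta_t) = (1-\omega_t)\mathbf{I}_{N_t} + \omega_t e^{-\mathrm{i}\theta_t}\,\mathbf{B}(\theta_x), \qquad \mathbf{B}(\theta_x) := \bigl(\mathbf{K}_{\tau_\ell} + \lambda(\theta_x)\mathbf{M}_{\tau_\ell}\bigr)^{-1}\mathbf{C}_{\tau_\ell}.
\end{align*}

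First I would note that $\mathbf{B}(\theta_x)$ is well defined: $\Re\lambda(\theta_x) = \frac{a}{\Delta x_\ell}(1-\cos\theta_x)\ge 0$, and the DG-SEM stage matrix $\mathbf{K}_{\tau_\ell}+\lambda\mathbf{M}_{\tau_\ell}$ is nonsingular for such $\lambda$ by the A-stability of the underlying Lobatto IIIC scheme (Theorems~\ref{LobattoIIIC} and \ref{StabilityFunction}), which is exactly the invertibility already used implicitly in Lemma~\ref{FourierSymbolS}. Then I would apply Lemma~\ref{Eigenvalues} with the scalar $\lambda(\theta_x)$ and temporal step $\Delta\tau_\ell$: since $\lambda(\theta_x)\,\Delta\tau_\ell = \frac{a\,\Delta\tau_\ell}{\Delta x_\ell}\beta(\theta_x) = \mu\beta(\theta_x)$, this gives $\sigma(\mathbf{B}(\theta_x)) = \{0,\, R(-\mu\beta(\theta_x))\}$ with $R$ the stability function of Corollary~\ref{stabilityFunctionDG}. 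Finally, the eigenvalues of $c\mathbf{I}_{N_t} + d\,\mathbf{B}(\theta_x)$ are exactly $\{c + d\nu : \nu \in \sigma(\mathbf{B}(\theta_x))\}$, so
\begin{align*}
\sigma\bigl(\boldsymbol{\mathcal{S}}_{\tau_{\ell},\xi_{\ell}}(\theta_x,\theta_t)\bigr) = \bigl\{\, 1-\omega_t,\ \ 1-\omega_t + \omega_t e^{-\mathrm{i}\theta_t} R(-\mu\beta(\theta_x)) \,\bigr\},
\end{align*}
and taking absolute values yields $\rho(\boldsymbol{\mathcal{S}}_{\tau_{\ell},\xi_{\ell}}(\theta_x,\theta_t)) = \max\{|1-\omega_t|,\, S(\omega_t,\theta_x,\theta_t)\}$ with $S$ as in \eqref{FunctionSpectralRadiusS}.

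I do not expect a genuine obstacle; the statement is essentially a one-line corollary of Lemma~\ref{Eigenvalues}. The two points that require a little care are (i) matching the scalar $\lambda(\theta_x) = \frac{a}{\Delta x_\ell}\beta(\theta_x)$ correctly so that the argument of $R$ collapses to $-\mu\beta(\theta_x)$ with $\mu = \frac{a\Delta\tau_\ell}{\Delta x_\ell}$, and (ii) the well-definedness of $\mathbf{B}(\theta_x)$, which is precisely where the dissipativity $\Re\beta(\theta_x)\ge 0$ enters. No diagonalisability argument is needed, since the spectral radius equals the largest eigenvalue modulus irrespective of the Jordan structure; in fact $\mathbf{C}_{\tau_\ell}$, hence $\mathbf{B}(\theta_x)$, has rank one, so the eigenvalue $1-\omega_t$ occurs with multiplicity $N_t-1$ and $\boldsymbol{\mathcal{S}}_{\tau_{\ell},\xi_{\ell}}$ is even diagonalisable whenever $R(-\mu\beta(\theta_x))\neq 0$.
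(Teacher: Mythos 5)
Your proposal is correct and follows essentially the same route as the paper: read off the spectrum of the affine expression $(1-\omega_t)\mathbf{I}_{N_t}+\omega_t e^{-\mathrm{i}\theta_t}(\mathbf{K}_{\tau_\ell}+\tfrac{a}{\Delta x}\beta(\theta_x)\mathbf{M}_{\tau_\ell})^{-1}\mathbf{C}_{\tau_\ell}$ via Lemma \ref{Eigenvalues} and take the maximum modulus. Your additional remarks on the invertibility of the stage matrix and on the rank-one structure of $\mathbf{C}_{\tau_\ell}$ are correct but go beyond what the paper's proof records.
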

  
 \begin{proof}
 The eigenvalues of $\boldsymbol{\mathcal{S}}_{\tau_{\ell},\xi_{\ell}}(\theta_x,\theta_t)$ are given by
 \begin{align*}
 \sigma(\boldsymbol{\mathcal{S}}_{\tau_{\ell},\xi_{\ell}}(\theta_x,\theta_t)) & = 1-\omega_t+e^{-\mathrm{i}\theta_t}\omega_t\sigma\left( \left(\mathbf{K}_{\tau_{\ell}}+\frac{a}{\Delta x}(-e^{-\mathrm{i}\theta_x}+1)\mathbf{M}_{\tau_{\ell}}\right)^{-1}\mathbf{C}_{\tau_{\ell}} \right).
 \end{align*}
 With Lemma \ref{Eigenvalues} we can compute the spectrum as 
 \begin{align*}
 \sigma(\boldsymbol{\mathcal{S}}_{\tau_{\ell},\xi_{\ell}}(\theta_x,\theta_t)) = \{ 1-\omega_t,1-\omega_t+e^{-\mathrm{i}\theta_t}\omega_t R(-\mu\beta(\theta_x)) \}.
 \end{align*}
 Therefore it follows that
 \begin{align*}
 \rho(\boldsymbol{\mathcal{S}}_{\tau_{\ell},\xi_{\ell}}(\theta_x,\theta_t)) = \max \{ |1-\omega_t|,|1-\omega_t+e^{-\mathrm{i}\theta_k}\omega_t R(-\mu\beta(\theta_x))| \}.
 \end{align*}
 \end{proof}

 \subsection{Optimal Damping Parameter}
 
 The goal is to find a smoother with optimal smoothing behavior, i.e. to find a damping parameter $\omega_t$ for the block Jacobi smoother such that the high frequencies frequencies are smoothed as efficiently as possible. We analyze the Fourier symbol $\boldsymbol{\mathcal{S}}_{\tau_{\ell},\xi_{\ell}}$ to find the optimal damping parameter $\omega_t \in (0,1]$ in \eqref{ItMatrixDampedJacobi}. In order to do so, the frequencies which are damped less efficiently need to be determined. These are also called worst case frequencies:
 
 \begin{definition}
  The \textit{worst case frequencies} for the Fourier symbol $\boldsymbol{\mathcal{S}}_{\tau_{\ell},\xi_{\ell}}$ of the smoother are defined as those high frequencies that are damped least efficiently.
 \end{definition}
 
 This can be done by calculating the maximum of $|1-\omega_t|$ and
 \begin{align}\label{DefWorstCaseFreq}
    (\theta_x^*(\omega_t,\mu),\theta_t^*(\omega_t,\mu)) := \argsup_{\substack{(\theta_x,\theta_t) \in \Theta^{high}}} S(\omega_t,\theta_x,\theta_t),
 \end{align}
 with the function $S$ defined in Lemma \ref{SpectralRadiusS}, see equation \eqref{FunctionSpectralRadiusS}.
 
 Straightforward calculations give 
 \begin{align*}
  S(\omega_t,\theta_x,\theta_t)^2 & = |1-\omega_t+e^{-\mathrm{i}\theta_t}\omega_t R(-\mu\beta(\theta_x))|^2 \\
  & =  (1-\omega_t)^2 + 2\omega_t (1-\omega_t)(\cos(\theta_t) \text{Re}(R(-\mu\beta(\theta_x))) + \sin(\theta_t) \text{Im}(R(-\mu\beta(\theta_x)))) \\
  &~~ + \omega_t^2 |R(-\mu\beta(\theta_x))|^2.
 \end{align*}
 
 When considering implicit time integration, large CFL numbers $\mu \gg 0$ are of interest. Then
 \begin{align*}
  \cos(\theta_t) \text{Re}(R(-\mu\beta(\theta_x))) + \sin(\theta_t) \text{Im}(R(-\mu\beta(\theta_x))) \xrightarrow[\mu\to\infty]{} 0,
 \end{align*}
 since $\cos(\theta_t), \sin(\theta_t) \in [-1,1]$, and the method is L-stable, see Corollary \ref{stabilityFunctionDG}, thus $R(-z) \rightarrow 0$ for $z \rightarrow \infty$ and $\text{Re}(-\mu\beta(\theta_x)) \le 0$ for $\theta_x \in [-\pi,\pi]$.
 
 To find the worst case frequencies in space for large CFL numbers $\mu$ we thus need to maximize $|R(-\mu\beta(\theta_x))|^2$. From L-stability it follows that $|R(-\mu\beta(\theta_x))|^2 \le 1$ and moreover we have by definition of the Pad\'e approximant that $R(0)=1$. Thus we have found the worst case frequency in space:
 \begin{align*}
 \theta_x^* = \argsup_{\substack{\theta_x \in (-\pi,\pi]}} S(\omega_t,\theta_x,\theta_t) = 0.
 \end{align*} 
 Evaluating \eqref{FunctionSpectralRadiusS} at $\theta_x = \theta_x^*=0$ gives
 \begin{align*}
 S(\omega_t,0,\theta_t) & = |1-\omega_t+e^{-\mathrm{i}\theta_t}\omega_t R(-\mu\beta(\theta_x))|\\
 & = \sqrt{(1-\omega_t)^2 + 2\omega_t (1-\omega_t) \cos(\theta_t) + \omega_t^2}.
 \end{align*} 
 With this, it follows that the worst case frequencies in time are given by
 \begin{align*}
 \theta_t^{*} & = \argsup_{\substack{\theta_t \in [\pi/2,\pi]}} S(\omega_t,\theta_x^*,\theta_t) = \frac{\pi}{2},\\
 \theta_t^{*} & = \argsup_{\substack{\theta_t \in [-\pi,-\pi/2]}} S(\omega_t,\theta_x^*,\theta_t) = -\frac{\pi}{2}.
 \end{align*}
 Thus, we have found worst case frequencies $(\theta_x^*,\theta_t^*) \in \Theta_{\ell_x,\ell_t}^{high,s}$ for the semi-coarsening strategy as well as $(\theta_x^*,\theta_t^*) \in \Theta_{\ell_x,\ell_t}^{high,f}$ for the full-coarsening strategy. The optimal damping parameter can then be calculated by
 \begin{align*}
 \omega_t^* = \arginf_{\omega_t \in (0,1]}S(\omega_t,\theta_x^*,\theta_t^{*}) = 0.5.
 \end{align*}
 
 \subsection{Asymptotic Smoothing Factor}
 
 With the optimal damping parameter $\omega_t^* = 0.5$ and the worst case frequencies $(\theta_x^*,\theta_t^*)$ at hand we can calculate the asymptotic smoothing factor from Definition \ref{AsymptSmoothingFactors}. 
 
 In the case of full space-time coarsening we get for $\mu$ large enough
 \begin{align*}
 \varrho(\boldsymbol{\mathcal{S}}) = \rho(\boldsymbol{\mathcal{S}}_{\tau_{\ell},\xi_{\ell}}(\theta_x^*,\theta_t^*)) = \max \left\{ |0.5|,S(0.5,0,\frac{\pi}{2}) \right\} = \max \{ 0.5,\sqrt{0.5}\} = \frac{1}{\sqrt{2}},
 \end{align*} 
 and for semi-coarsening in time
 \begin{align*}
 \varrho(\boldsymbol{\mathcal{S}}) = \rho(\boldsymbol{\mathcal{S}}_{\tau_{\ell},\xi_{\ell}}(\theta_x^*,\theta_t^*)) = \max \left\{ |0.5|,S(0.5,0,-\frac{\pi}{2}) \right\} = \max \{0.5,\sqrt{0.5}\} = \frac{1}{\sqrt{2}}.
 \end{align*}

 \section{Two-Grid Analysis}\label{SectionTwoGridAnalysis}
 
 In this section we analyze the two-grid iteration for full- and semi-coarsening by studying the corresponding iteration matrices $\underline{\mathbf{M}}_{\tau_{\ell},\xi_{\ell}}^f$ and $\underline{\mathbf{M}}_{\tau_{\ell},\xi_{\ell}}^s$, see equations \eqref{Full2GridMatrix} and \eqref{Semi2GridMatrix}. With Theorems \ref{FourierSymbTwoGridsemi} and \ref{FourierSymbTwoGridfull} we can analyze the asymptotic convergence behavior of the two-grid cycle by computing the maximal spectral radius of the Fourier symbols $\boldsymbol{\mathcal{M}}_{\mu}^s(\theta_x,\theta_t)$ and $\boldsymbol{\mathcal{M}}_{\mu}^f(\theta_x,\theta_t)$ for $(\theta_x,\theta_t) \in \Theta_{{\ell}_x,{\ell}_t}^{low,f}$, see \eqref{SymbolFull} and \eqref{SymbolSemi}.
 
 \begin{definition}\label{AsymptConvFactors}
 We define the \textit{asymptotic two-grid convergence factors} as
 \begin{align*}
 \varrho(\boldsymbol{\mathcal{M}}^s) &:= \max \{ \varrho (\boldsymbol{\mathcal{M}}^s(\theta_x,\theta_t)) : (\theta_x,\theta_t) \in \Theta_{{\ell}_x,{\ell}_t}^{low,f} \setminus \Lambda_s \},\\
 \varrho(\boldsymbol{\mathcal{M}}^f) &:= \max \{ \varrho( \boldsymbol{\mathcal{M}}^f(\theta_x,\theta_t) ): (\theta_x,\theta_t) \in \Theta_{{\ell}_x,{\ell}_t}^{low,f} \setminus \Lambda_f\},
 \end{align*}
 with $\boldsymbol{\mathcal{M}}^s(\theta_x,\theta_t)$ and $\boldsymbol{\mathcal{M}}^f(\theta_x,\theta_t)$ the symbols of the two-grid iteration matrices and $\Lambda_s$ defined in \eqref{SetSemi} and $\Lambda_f$ defined in \eqref{SetFull}.
 \end{definition}
 
 To derive $\varrho(\boldsymbol{\mathcal{M}}^s)$ and $\varrho(\boldsymbol{\mathcal{M}}^f)$ for a given CFL number $\mu \in \mathbb{R}_{+}$ and a given polynomial degree $p_t \in \mathbb{N}$, it is necessary to compute the eigenvalues of
 \begin{align*}
 \boldsymbol{\mathcal{M}}^s(\theta_x,\theta_t) \in \mathbb{C}^{4N_t \times 4N_t} ~~~~\text{and}~~~~ \boldsymbol{\mathcal{M}}^f(\theta_x,\theta_t) \in \mathbb{C}^{4N_t \times 4N_t},
 \end{align*}
 with $N_t = p_t+1$ for all low frequencies $(\theta_x,\theta_t) \in \Theta_{{\ell}_x,{\ell}_t}^{low,s}$ respectively $(\theta_x,\theta_t) \in \Theta_{{\ell}_x,{\ell}_t}^{low,f}$. 

 It is difficult to find analytical expressions for the eigenvalues of the two-grid operators $\boldsymbol{\mathcal{M}}^f(\theta_x,\theta_t)$ and $\boldsymbol{\mathcal{M}}^s(\theta_x,\theta_t)$ since they are the product of several Fourier symbols which itself are complex functions. We therefore compute the eigenvalues numerically for all frequencies $(\theta_x,\theta_t) \in \Theta_{{\ell}_x,{\ell}_t}^{low,f}$ and $(\theta_x,\theta_t) \in \Theta_{{\ell}_x,{\ell}_t}^{low,s}$. 
 We consider a space-time discretization with $N_x$ volumes in space and $N$ space-time slabs on the domain $[0,1] \times [0,T]$, where we adapt $T$ via the CFL number $\mu = \frac{\Delta t}{\Delta x} \in [1,800]$.

The results for the LFA can be seen in Figure~\ref{fig:LFA} for $N_x=2^5$, $N=2^3$ to the left and for $N_x=2^{10}$, $N=2^3$ to the right, $\mu \in [1,10,50,100,200,400,600,800]$ for both coarsening strategies, referred to as semi and full, and $p_t=0$ and $p_t=1$, respectively.
 
They show that for high CFL numbers the multigrid solver has excellent asymptotic convergence rates about $0.5$ for $p_t=0$ which can be improved to $0.375$ by increasing the polynomial degree in time to $p_t=1$. Moreover, these convergence rates are independent of the coarsening strategy.

\begin{figure}[ht!]
\includegraphics[width=0.49\textwidth]{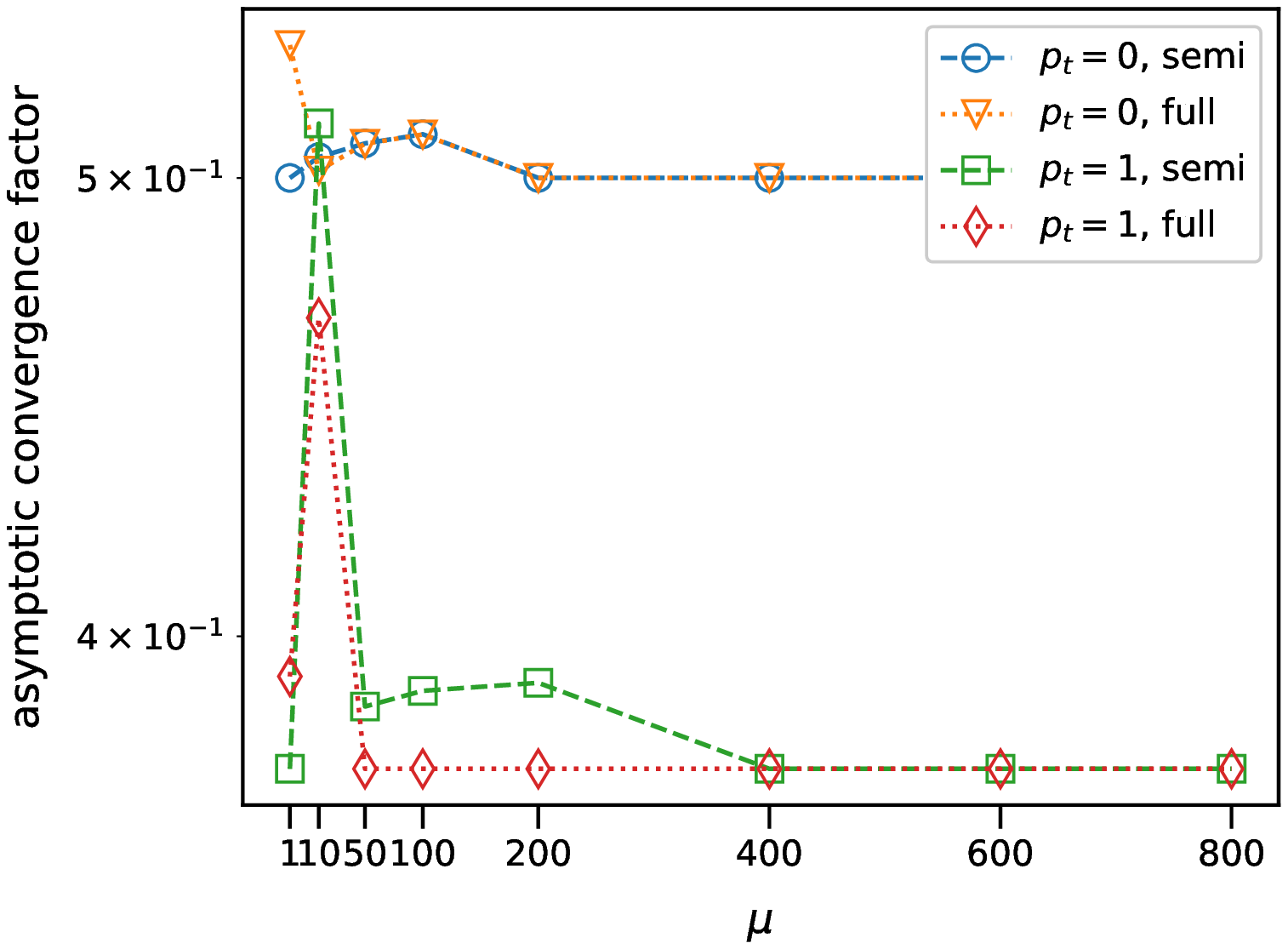}
\includegraphics[width=0.49\textwidth]{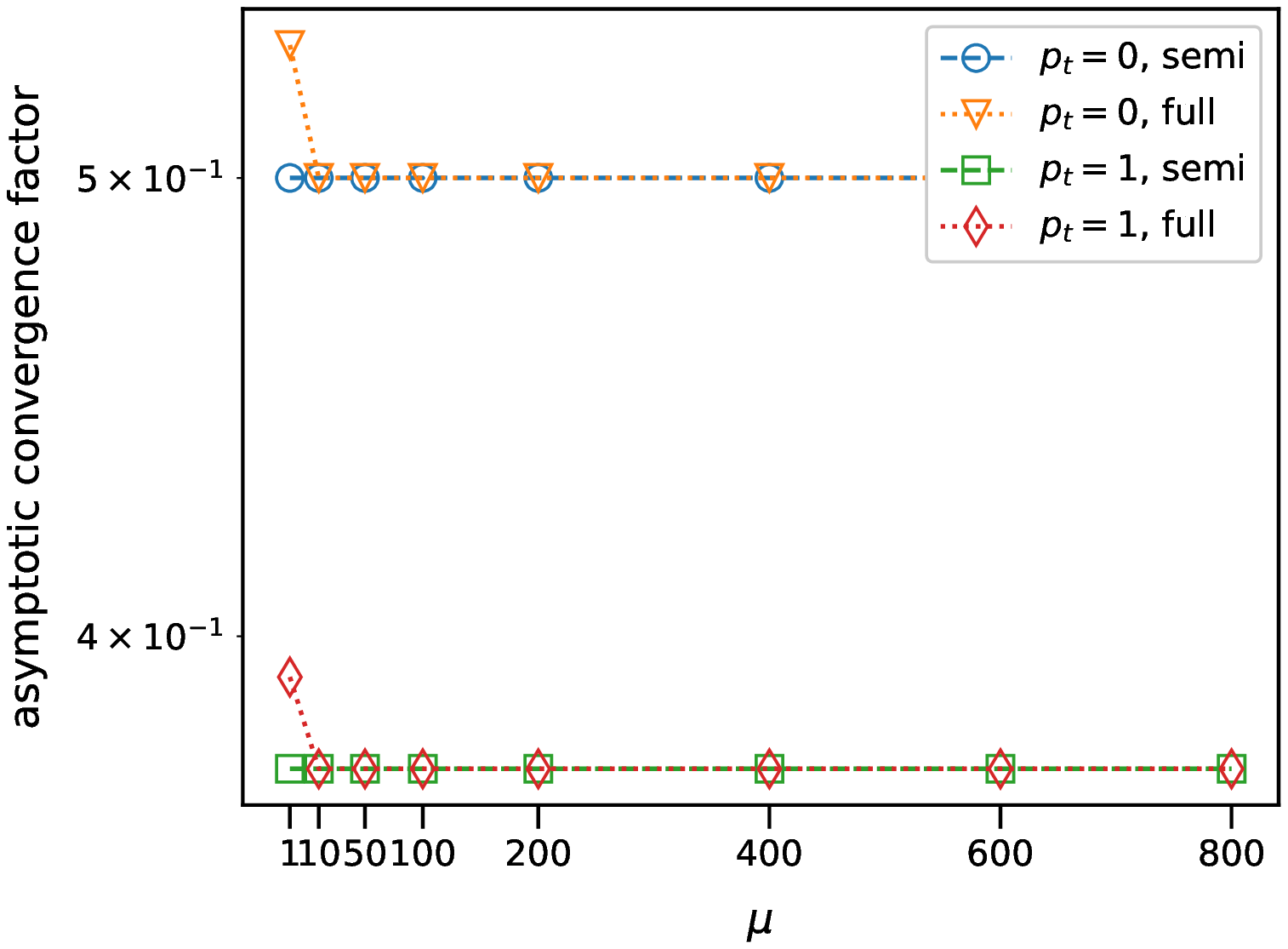}
\caption{Results of the LFA for the test problem: left for $N_x=2^5$, $N=2^3$, right for $N_x=2^{10}$, $N=2^3$.}
\label{fig:LFA}
\end{figure}

\section{Numerical Examples}\label{SectionNumericalResults}

We now solve the system \eqref{MG} using this two-grid method. Periodic boundary conditions in space and time, needed to perform the LFA, cannot be used for the numerical tests since this results in singular iteration matrices. We therefore adjust test case \eqref{advectionProblem} and consider the following problems in one respectively two spatial dimensions:
\begin{align}
 u_t + a u_x = 0,~ a = 1,~ (x,t) \in (0,1] \times (0,T],
\end{align}
with solution $u(x,t) = \sin(\pi(x-t))$, and
\begin{align}
 \mathbf{u}_t + \mathbf{a} \cdot \mathbf{u}_x = \mathbf{0},~ \mathbf{a} = (1,1],~ (\mathbf{x},t) \in (0,1]^2 \times [0,T],
\end{align}
with solution $\mathbf{u}(\mathbf{x},t) = \sin(\pi(x_1-t)) \sin(\pi(x_2-t))$.
Moreover, we consider a full space-time DG-SEM, i.e. a DG-SEM in space and time. As before, the time interval is determined via $T = N \mu \Delta x$.

All numerical tests in this section are performed using the Python interface of DUNE \cite{Dedner2020} on an Intel Xeon E5-2650 v3 processor (Haswell) on the LUNARC Aurora cluster at Lund University.

We calculate the asymptotic convergence rate from 60 multigrid iterations by
\begin{align*}
\max_{i=1,\dots,59} \frac{\| \mathbf{r}^{i+1} \|_2}{\|\mathbf{r}^i\|_2},~ \mathbf{r}^i= \underline{\mathbf{L}}_{\tau,\xi}\underline{\mathbf{u}}^i-\underline{\mathbf{b}}.
\end{align*}
The results for one spatial dimension can be seen in Figure~\ref{fig:Num} to the left, with $N=2^3$ and $N_x=2^{10}$. The convergence rates converge for both coarsening strategies to approximately $0.25$ for $p_t=p_x=0$ and to approximately $0.3$ for $p_t=p_x=1$. The CFL number to achieve these convergence rates increases when increasing the order of the polynomial approximation. Moreover, we get slightly higher convergence rates for small CFL numbers when using the semi-coarsening strategy. 

Increasing the number of spatial dimensions we measure the numerical convergence rates for $N=2^3$ and $N_x=N_y=2^5$. The results can be seen in Figure~\ref{fig:Num} to the right. Here, the convergence rates for both coarsening strategies and different DG orders are very similar, converging to approximately $0.25$. However, we notice some oscillations for the semi-coarsening ansatz with $p_t=p_x=1$. This might vanish when increasing the CFL number. While the numerical convergence rates are similar to the one-dimensional case for $p_t=p_x=0$, they improve slightly for $p_x=p_t=1$ when increasing the number of spatial dimensions.

\begin{figure}[ht!]
\includegraphics[width=0.49\textwidth]{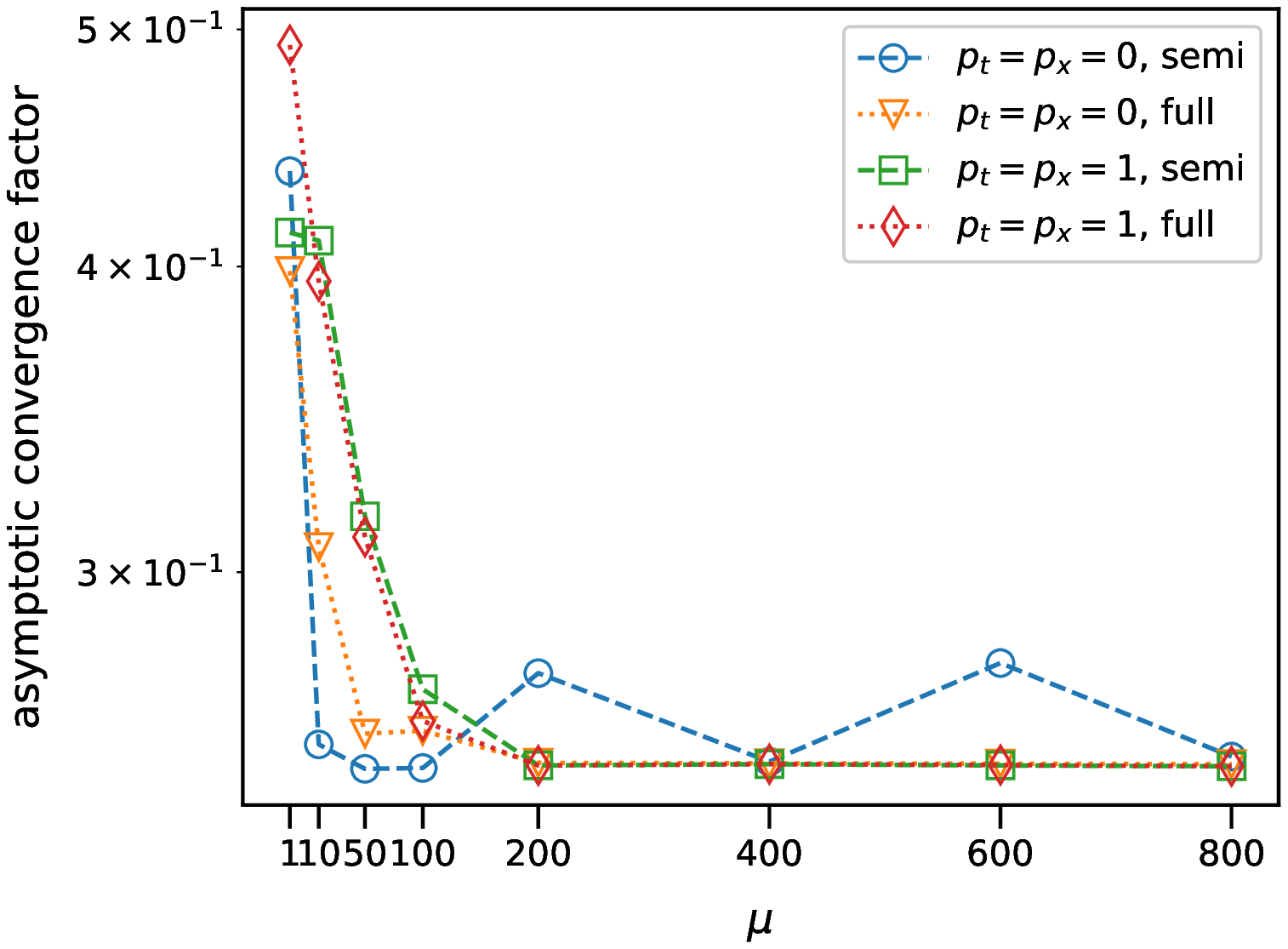}
\includegraphics[width=0.49\textwidth]{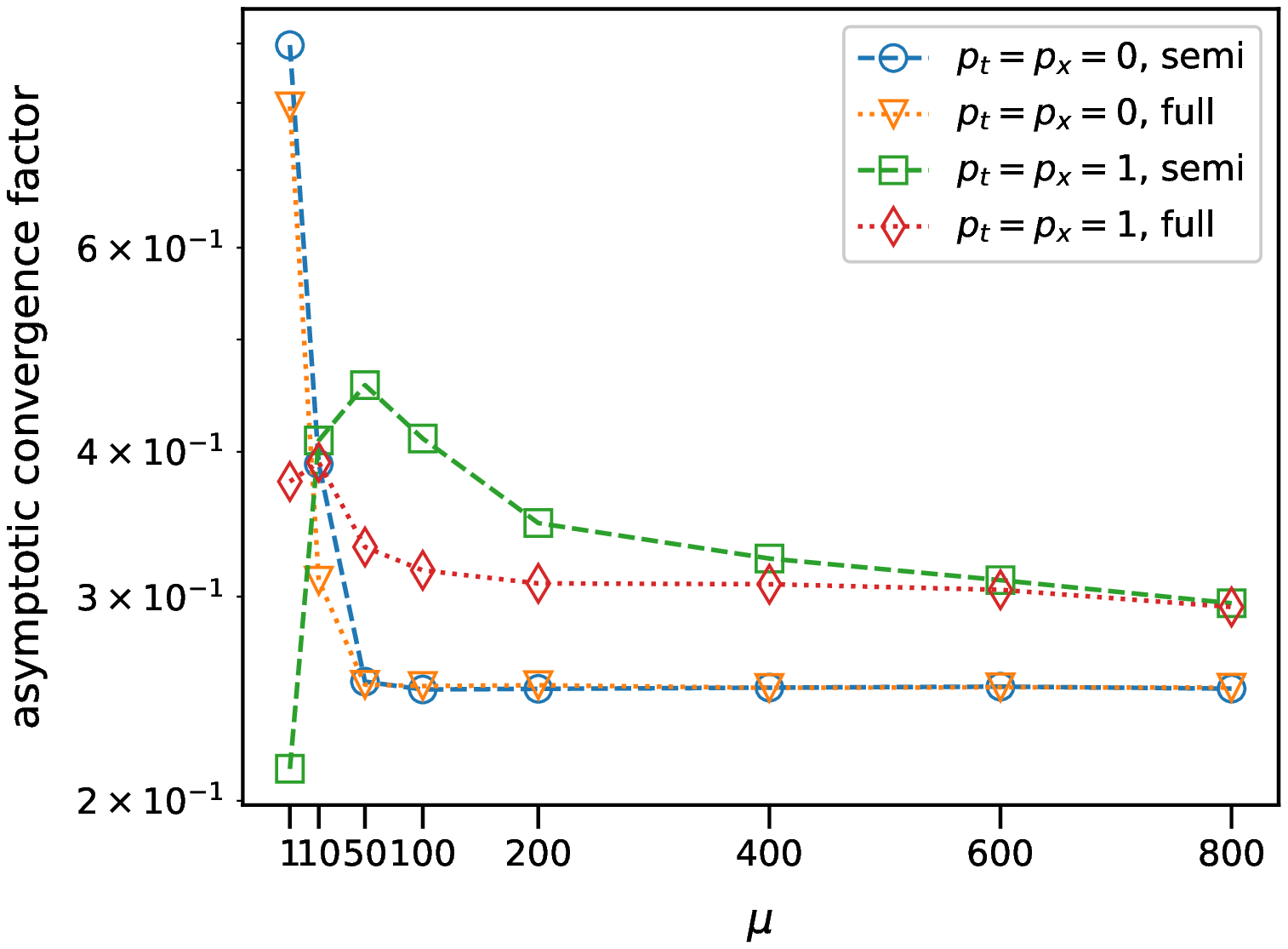}
\caption{Numerical convergence results: left for one spatial dimension, right for two spatial dimensions.}
\label{fig:Num}
\end{figure}

We now fix $CFL=600$ and vary the number of spatial elements to study the grid independence of the multigrid solver. The results can be seen in Figure~\ref{fig:Num1DNx}. For $p_t=p_x=1$ we can conclude a grid independence, while the convergence rate increases slightly when increasing the order of the DG approximation. 

\begin{figure}[h]
\centering
\includegraphics[width=0.5\textwidth]{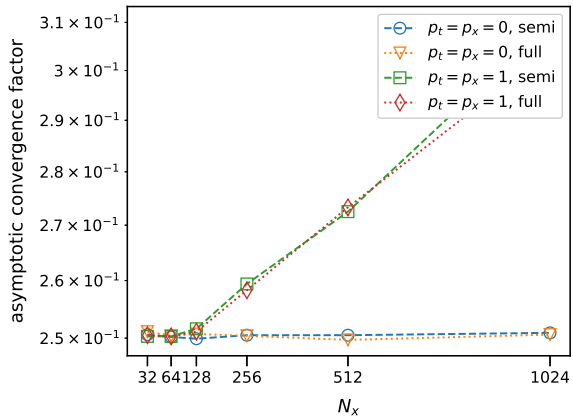}
\caption{Numerical convergence results two spatial dimension, $\mu=600$, $N=2^5$}
\label{fig:Num1DNx}
\end{figure}

\section{Conclusions}\label{SectionConclusions}
 
In this article we have applied the LFA to a space-time multigrid solver for the advection equation discretized with a space-time DG method. With the help of the analysis we calculated asymptotic convergence factors for the smoother and the two-grid method. The resulting Fourier symbols are complex since the spatial FV discretization with upwind flux results in a non-symmetric operator. For large CFL numbers we could analytically find promising asymptotic smoothing factors converging to $\frac{1}{\sqrt{2}}$ with increasing CFL number for both coarsening strategies independent of the temporal DG-SEM order. As for the smoother, it was difficult to find analytical expressions for the two-grid asymptotic convergence rates since they are based on the product of several complex Fourier symbols. We therefore calculated these numerically. The LFA gave excellent asymptotic convergence rates converging to $0.5$ for $p_t=0$ and decreasing to $0.375$ for $p_t=1$ for higher CFL numbers after some oscillations for small CFL numbers. The influence of the coarsening strategies on the convergence rates is minimal, with semi-coarsening in time resulting in slightly better asymptotic convergence rates for smaller CFL numbers.

For the numerical tests we considered non-periodic advection problems in one and two spatial dimensions with a space-time DG-SEM approximations and executed the numerical experiments in DUNE. We obtained asymptotic convergence rates of approximately $0.25$ for $p_t=p_x=0$ and $0.3$ for $p_t=p_x=1$ and high CFL numbers, independent of the coarsening strategy in the one-dimensional case. For two dimensions, asymptotic convergence rates of approximately $0.25$ were measured for high CFL numbers, independent of the  DG-SEM order and the coarsening strategy

The tests showed that the theoretical asymptotic convergence rates from the LFA were slightly larger than the convergence rates obtained in the numerical experiments. This can be explained by the different boundary conditions and more dimensions considered for in numerical experiments. Moreover, the coarsening strategy does not influence the results very much and simple block Jacobi smoothers can be used to get smoothing factors of $\frac{1}{\sqrt{2}}$.

However, solving the resulting space-time system at once results in large systems and it is thus advisable to either parallelize the solver or use a block multigrid solver for each space-time block.

\section*{Acknowledgements}

Gregor Gassner has been supported by the European Research Council (ERC) under the European Union’s Eights Framework
Program Horizon 2020 with the research project Extreme, ERC grant agreement no. 714487. 

\bibliographystyle{abbrv}
\bibliography{LFAVersbach}

\end{document}